\documentclass[12pt, reqno]{amsart}
\usepackage{amsmath,amssymb,amsthm}
\usepackage{amsmath, amssymb}
\usepackage{amsthm, amsfonts, mathrsfs}
\usepackage{mathptmx}
\usepackage{fullpage}
\usepackage{amsfonts,graphicx}
\numberwithin{equation}{section}
\usepackage[colorlinks=true, pdfstartview=FitV, linkcolor=blue, citecolor=blue, urlcolor=blue]{hyperref}

\numberwithin{equation}{section}
\newtheorem{theorem}{Theorem}[section]
\newtheorem{proposition}[theorem]{Proposition}
\newtheorem{lemma}[theorem]{Lemma}

\numberwithin{equation}{section}

\theoremstyle{remark}
\newtheorem{remark}[theorem]{Remark}
\def\T{ \mathbb{T} }

\newcommand{\p}{{\mathbb{P}}}
\newcommand{\R}{{\mathbb{R}}}
\def\div{ \hbox{\rm div}\,  }
\def\u{ \mathbf{u} }
\def\G{{g} }
\def\n{ \mathbf{n} }
\def\b{ \mathbf{B} }
\def\h{ \mathbf{b} }
\def\T{ \mathbb{T} }
\newcommand{\Z}{{\mathbb{Z}}}

\def\la{ \Lambda }
\def\ta{ \theta }
\def\vta{ \vartheta }
\def\nn{\nonumber}

\newcommand{\beq}{\begin{equation}}
\newcommand{\eeq}{\end{equation}}
\newcommand{\norm}[2]{\lVert #1 \rVert_{#2}}
\newcommand{\norma}[2]{\left\lVert #1 \right\rVert_{#2}}

\begin{document}
\title[3D Inviscid Non-Isentropic MHD]{Magnetic Stabilization of Compressible Flows: Global Existence in 3D Inviscid Non-Isentropic MHD Equations}

\author[Wu, Xu and Zhai]{Jiahong Wu$^{1}$, Fuyi Xu$^{2}$ and Xiaoping Zhai$^{3}$}

\address{$^1$ Department of Mathematics, University of Notre
	Dame, Notre Dame, IN 46556, USA}

\email{jwu29@nd.edu}

\address{$^{2}$School of Mathematics and  Statistics, Shandong University of
	Technology,  Zibo 255049,  Shandong Province,    P.R. China}

\email{zbxufuyi@163.com}

\address{$^{3}$   School of Mathematics and Statistics, Guangdong University of
	Technology,  Guangzhou 510520, P.R. China}

\email{pingxiaozhai@163.com (Corresponding author)}

\subjclass[2020]{35Q35; 76N10; 76W05}

\keywords{Inviscid compressible MHD equations; Magnetic stabilization}

\begin{abstract}
Solutions to the compressible Euler equations in all dimensions have been shown
to develop finite-time singularities from smooth initial data such as shocks
and cusps. There is an extraordinary list of results on this subject. When the
inviscid compressible flow is coupled with the magnetic field in the 3D
inviscid non-isentropic compressible magnetohydrodynamic (MHD) equations in
$\mathbb{T}^3$, this paper rules out finite-time blowup and establishes the global
existence of smooth and stable solutions near a suitable background magnetic
field. This result rigorously confirms the stabilizing phenomenon observed in
physical experiments involving electrically conducting fluids.
\end{abstract}
\maketitle


\section{ Introduction and Main Result}

This paper aims to rigorously investigate the stabilizing phenomenon observed in physical experiments, using the example of the 3D inviscid, heat-conductive, compressible magnetohydrodynamic (MHD) equations near a background magnetic field. The 3D non-isentropic compressible MHD system assumes the form
\begin{eqnarray}\label{m0}
	\left\{\begin{aligned}
		&\partial_t \rho + \div (\rho \u) =0, \quad t>0, \,\, x\in \mathbb
		T^3,\\
		&\rho\partial_t \u+ \rho\u\cdot\nabla \u+\nabla
		P=(\nabla\times\h)\times\h,\\
		& c_\nu(\rho \vta_t +  \rho \u \cdot \nabla \vta)-\kappa\Delta\vta +P
		\div \u  = \sigma|\nabla\times\h|^2,\\
		&\partial_t
		\h-\sigma\Delta\h+\u\cdot\nabla\h-\h\cdot\nabla\u+\h\div\u=0,\\
		&\div \h =0,
	\end{aligned}\right.
\end{eqnarray}
where $\mathbb T^3$ is the 3D periodic box, and $\rho =\rho(t,x)$,  $\u=
\u(t,x)$, $\vta=\vta(t,x)$ and  $\h= \h(t,x)$
denote the density, the velocity field, the temperature and the magnetic
field, respectively. The positive parameters $c_\nu$, $\kappa$ and $\sigma$ are
the specific heat at constant volume, the coefficient of heat conduction  and
the magnetic diffusivity, respectively.
The pressure $P=P(\rho, \vta)$ is assumed to be of the form
\beq\label{pres}
P(\rho, \vta) =R \rho \vta
\eeq
with a universal constant $R>0$. We remark that the main result
presented in this
paper actually hold for the following more general pressure laws $P(\rho, \vta)
= \pi_0(\rho) + \vta \pi_1(\rho)$ when the smooth functions $\pi_0$ and $\pi_1$
satisfy some very general constraints.

\vskip .1in
The compressible MHD models considered here provide the principal framework for
the theoretical
description of turbulence in the solar wind. Since the observed fluctuations
involve density variations, the effects of plasma compressibility should be
incorporated in the theory \cite{Gold}.

\vskip .1in
The motivation for studying the global existence, stability and large-time
behavior of \eqref{m0} comes from two distinct
sources. The first is the stabilizing phenomenon observed in physical
experiments. An important issue in the MHD turbulence theory is to understand
the influence of the magnetic field on bulk MHD turbulence. Various experiments
on electrically conducting fluids such as liquid metals have observed that
the background magnetic fields can actually stabilize these MHD flows (see,
e.g., \cite{AMS, Alex, Alf,  Davi0, Davi1, DA, Gall, Gall2,jiangfei2023, Mof}).
Our intention has been to understand the mechanism and establish this
phenomenon as mathematically rigorous facts. The second motivation is
mathematical. Solutions of the compressible Euler equations with the ideal gas
law in all dimensions (1D, 2D and 3D) have been shown to form finite-time
singularities from smooth initial data such as shocks and cusps, due to an
outstanding list of research works on this subject (see, e.g., \cite{Buc1,
Buc2, Buc3, Chr1, Chr2, Luk, Mer, Sid, Xin, Yin}).  Our intention here is to
provide a
global smooth and stability result for the compressible MHD system when the
compressible Euler is coupled with the magnetic field near a suitable
background.

\vskip .1in
The background magnetic field $\n\in\R^3$ is assumed to satisfy the following
Diophantine condition, for any $\mathbf{k}\in\Z^3\setminus \{\mathbf{0}\},$
\begin{equation}\label{diufantu}
	|\n\cdot \mathbf{k}|\ge \frac{c}{|\mathbf{k}|^r} \quad\mbox{for some $c>0$
	and $r>2$.}
\end{equation}

We remark that studying the dynamics near a vector field satisfying the
Diophantine condition has been a common practice in ergodic theory and
dynamical systems (see, e.g., \cite{Cas, Koc, Lop}). As shown by Chen,  Zhang and Zhou
\cite{zhangzhifei}, almost all vector fields
in $\R^3$ satisfy \eqref{diufantu}. Of course, there are vectors that do not
satisfy the Diophantine condition such as those with all three components
being rational. A crucial fact about a vector field $\n\in\R^3$ satisfying the
Diophantine condition is the following Sobolev inequality
for any function $f$
satisfies  $\nabla f\in H^{s+r}(\T^3)$ and $\int_{\T^3}f\,dx=0$,
\begin{equation}
	\|f\|_{H^{s}(\T^3)}\le C\|{\n}\cdot\nabla f\|_{H^{s+r}(\T^3)}. \label{Sob}
\end{equation}

\vskip .1in
We remark that there is a very large literature on the stability problem
concerning the incompressible MHD equations near a background magnetic field.
Studies on the compressible MHD stability problem is
relatively more recent and important progress has been made (see, e.g.,
\cite{DongWuZhai, Huang1, jiangfei2019, JZW, Hong, HuWang, LS2D, Sun, TanWang, WangXin,
WuWu, WuZhu}). Wu and Wu \cite{WuWu} systematically investigated the stability
problem on the 2D compressible MHD equations with velocity dissipation but
without magnetic diffusion near a background magnetic field. The spatial domain
is the whole space $\mathbb R^2$. A key discovery of this paper is that the
system governing the perturbations can be converted into fourth-order wave
equations. In contrast, for the incompressible MHD flows, the wave equations
are in general second-order.  The corresponding stability problem for the 3D
compressible MHD with velocity dissipation and no magnetic diffusion in
$\mathbb R^3$ remains open. When the spatial domain is the 2D periodic domain
$\mathbb T^2$, Wu and Zhu \cite{WuZhu} solved the stability problem on the 2D
non-resistive MHD equation by constructing the equations of combined
quantities and making use of the wave structures. In the corresponding 3D
periodic case, Wu and Zhai \cite{WuZhai} solved the MHD stability problem with
velocity dissipation near a background magnetic field $\n\in\R^3$ satisfying
Diophantine condition. When the fluid is governed by the inviscid compressible
Euler equations, the situation becomes much more difficult and the goal of this
paper is to give a definite answer to this challenging open problem. It is worth noting the beautiful work of Wang and Xin \cite{WangXin0}, which investigates the well-posedness of the inviscid, heat-conductive, and resistive compressible MHD system in a horizontally periodic flat strip domain. However, they approach does not apply to the periodic setting of this paper.

\vskip .1in
For any positive constants $\bar \rho$ and $\bar \vta$,  it is easy to verify
that
$(\bar \rho, {0}, \bar \vta, \n)$ is an equilibrium state solution of
(\ref{m0}).
Without loss of generality, we take $\bar \rho = \bar \vta =1$. The
perturbation $(a, \u, \theta, \b)$ with
$$
a = \rho -1, \quad \theta = \vta -1 \quad\mbox{and}\quad \b = \h -\n
$$
satisfies the following MHD system
\begin{eqnarray}\label{m1}
	\left\{\begin{aligned}
		&\partial_t a + \div ((1+a)\u) =0,\\
		&(1+a)\partial_t \u+ (1+a)\u\cdot\nabla \u+\nabla
		P=\n\cdot \nabla \b -\nabla ({\n}\cdot\b) +\b\cdot\nabla\b-\b\nabla\b
		,\\
		& c_\nu((1+a) \partial_t\theta +   (1+a)\u \cdot \nabla
		\theta)-\kappa\Delta\theta +P
		\div \u  = \sigma|\nabla\times\b|^2,\\
		&\partial_t
		\b-\sigma\Delta\b+\u\cdot\nabla\b-\b\cdot\nabla\u+\b\div\u=\n\cdot\nabla
		 \u-\n\div\u,\\
		&\div \b =0.
	\end{aligned}\right.
\end{eqnarray}
For simplicity, we set the parameter $c_\nu=1$. Denoting
$$
\bar{\kappa}(a)\stackrel{\mathrm{def}}{=}\frac{\kappa}{1+a},
\quad I(a)\stackrel{\mathrm{def}}{=}\frac{a}{1+a}\quad\hbox{and}\quad
J(a)=\ln({1+a} ),
$$
separating the linear parts from the nonlinear ones in (\ref{m1}) and using
(\ref{pres}), we have
\begin{eqnarray}\label{m2}
	\left\{\begin{aligned}
		&\partial_t a + \div\u  =f_1,\\
		&\partial_t \u+ R\nabla a+ R\nabla \ta
		={\n}\cdot\nabla \b-\nabla ({\n}\cdot\b)+f_2,\\
		&\partial_t\theta-\kappa\Delta\ta+ \div \u=f_3,\\
		&\partial_t \b-\sigma\Delta\b={\n}\cdot\nabla \u-\n\div\u+f_4,\\
		&\div \b =0,\\
		&(a,\u,\ta,\b)|_{t=0}=(a_0,\u_0,\ta_0,\b_0),
	\end{aligned}\right.
\end{eqnarray}
where
\begin{align}
	f_1\stackrel{\mathrm{def}}{=}&-\u\cdot\nabla a-a\div \u,\nn\\
	f_2\stackrel{\mathrm{def}}{=}&-\u\cdot\nabla
	\u+\b\cdot\nabla\b-\b\nabla\b+ R I(a)\nabla a- R \theta\nabla J(a)
	\nn\\
	&-I(a)({\n}\cdot\nabla \b+\b\cdot\nabla\b-{\n}\nabla \b-\b\nabla\b),\nn\\
	f_3\stackrel{\mathrm{def}}{=}&- \div(\theta \u)-\kappa
	I(a)\Delta\ta+\frac{|\nabla\times\b|^2}{1+a},\nn\\
	f_4\stackrel{\mathrm{def}}{=}&-\u\cdot\nabla\b+\b\cdot\nabla\u-\b\div\u.\nn
\end{align}

We make the following minor assumptions on the initial data,
\begin{align}
&\frac1{|\mathbb T^3|} \int_{\mathbb T^3} \rho_0(x) \,dx =1, \quad
\int_{\mathbb T^3} \rho_0(x) \u_0(x) \,dx =\int_{\mathbb T^3} \b_0(x) \,dx =0,\label{conser1}\\
&\int_{\mathbb T^3} \rho_0 \ta_0 \,dx+\frac12\int_{\mathbb T^3} \rho_0
	|\u_0|^2dx+\frac12\int_{\mathbb T^3} |\b_0|^2\,dx=0\label{conser1+1}.
\end{align}
The properties in \eqref{conser1} and \eqref{conser1+1} are preserved in time.
As a consequence, we
are able to apply Poincar\'e's inequality on $a$ and $\b$.
Poincar\'e type inequalities can also be established for $\u$ and
$\ta$, but they require more elaborated proofs due to the lack of the mean-zero
condition on $\u$ or $\ta$. A Poincar\'e type inequality is shown in Section
\ref{sec:l2} while a generalized Poincar\'e type inequality for $\ta$ is
provided in Section \ref{sec: tap}.
 Under these
minor assumptions, we are able to show that the MHD system governing the
perturbations \eqref{m2} always has a unique global smooth solution if the
initial data are sufficiently small. In addition, the perturbation is
asymptotically stable and decays to the equilibrium state solution
algebraically in time. More precisely, we establish the following theorem.

\begin{theorem}\label{dingli}
	For any ${N}\ge 4r+7$ with $r>2$. Assume that the initial data $(\rho_0,
	\u_0, \ta_0, \b_0)$ satisfies  \eqref{conser1}, \eqref{conser1+1}
	and, for $a_0= \rho_0-1$ and $\theta_0 =\vta_0-1$,
	\begin{align*}
		(a_0, \theta_0)\in H^{N}(\T^3),\quad
		c_0\le\rho_0, \ta_0\le c_0^{-1},\quad(\u_0, \b_0)\in H^{N}(\T^3)
	\end{align*}
	for some constant $c_0>0$.
Then there exists a small constant $\varepsilon>0$ such that, if
	\begin{align*}
		\norm{a_0}{H^{N}}+ \norm{\u_0}{H^{N}} +\norm{\theta_0}{H^{N}}+\norm{
		\b_0}{H^{N}}\le\varepsilon,
	\end{align*}
	then the system \eqref{m2} admits a  unique global  solution
	$(a,
	\u, \theta,\b)\in C([0,\infty );H^{N})$. Moreover, for any $t\ge
	0$ and $r+4\le\beta <N $,
	there holds
	\begin{align*}
		\norm{a(t)}{H^{\beta}}+\norm{\u(t)}{H^{\beta}}+\norm{\theta(t)}{H^{\beta}}+\norm{\b(t)}{H^{\beta}}\le
		 C(1+t)^{-\frac{3({N}-\beta)}{2({N}-r-4)}}.
	\end{align*}
\end{theorem}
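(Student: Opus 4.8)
\emph{Strategy and local theory.} The plan is to combine local well-posedness with global-in-time a priori estimates for a carefully designed energy--dissipation pair that encodes the magnetic stabilization, and then to upgrade these to algebraic decay by a time-weighted energy argument. Local existence and uniqueness of $(a,\u,\ta,\b)\in C([0,T];H^{N})$, with the pointwise bounds $c_{0}/2\le\rho,\ta\le 2c_{0}^{-1}$ propagated, follow by standard symmetric-hyperbolic/parabolic methods, $T$ depending only on the size of the data and $c_{0}$. The relations \eqref{conser1}--\eqref{conser1+1} are conserved, so $a,\b$ are mean-zero (Poincar\'e applies), and the generalized Poincar\'e inequalities proved later cover $\u,\ta$. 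Global existence then reduces, via the continuation criterion, to showing that $\|(a,\u,\ta,\b)(t)\|_{H^{N}}$ stays in a fixed small ball.

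\emph{Energy, dissipation and magnetic stabilization.} Differentiating \eqref{m2} and testing the $a,\u,\ta,\b$ equations against $R\partial^{\alpha}a$, $\partial^{\alpha}\u$, $R\partial^{\alpha}\ta$, $\partial^{\alpha}\b$ ($|\alpha|\le N$) and summing, the skew-symmetric acoustic and magnetic couplings cancel (because $\n$ is constant and $\div\b=0$), leaving only $\kappa\|\nabla\ta\|_{H^{N}}^{2}+\sigma\|\nabla\b\|_{H^{N}}^{2}$ of dissipation. Since $\u$ has no dissipation of its own, one recovers it from the field: using the $\b$-equation in the form $\n\cdot\nabla\u-\n\div\u=\partial_{t}\b-\sigma\Delta\b-f_{4}$, the interaction functional $\sum_{|\alpha|\le N-1}\langle\partial^{\alpha}\b,\partial^{\alpha}(\n\cdot\nabla\u-\n\div\u)\rangle$ produces $\|\n\cdot\nabla\u-\n\div\u\|_{H^{N-1}}^{2}$ as a good term, the rest absorbed into $\varepsilon(\text{good})+C\|\nabla\b\|_{H^{N}}^{2}$, lower-order norms and nonlinearities (the pressure and $\nabla(\n\cdot\b)$ cross terms dropping since $\div\b=0$); \eqref{Sob} then upgrades it to a low-order norm of the component of $\u$ transverse to $\n$. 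The component $\psi:=(\n\cdot\u)/|\n|^{2}$ is invisible to this mechanism, but projecting the momentum equation along $\n$ yields $\partial_{t}\psi+\tfrac{R}{|\n|^{2}}\n\cdot\nabla(a+\ta)=\text{nonlinear}$, and together with $\div(\psi\n)=\n\cdot\nabla\psi$ and $\partial_{t}(a+\ta)=-2\div\u+\kappa\Delta\ta+\text{nonlinear}$ a second interaction functional recovers $\|\n\cdot\nabla\psi\|_{H^{N-1}}^{2}$, hence, by \eqref{Sob}, a low-order norm of $\psi$. Finally the acoustic functional $\sum_{|\alpha|\le N-1}\langle\partial^{\alpha}\u,\nabla\partial^{\alpha}a\rangle$ produces $\|\nabla a\|^{2}$ at the cost of $\|\div\u\|^{2}$, which is dominated by $\|\n\cdot\nabla\psi\|^{2}$ and the transverse-velocity norm already in hand; as $a$ is mean-zero this closes the density as well. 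Forming $\mathcal{E}\simeq\|(a,\u,\ta,\b)\|_{H^{N}}^{2}+\delta(\text{interaction functionals})$ and the corresponding $\mathcal{D}$ (comprising $\|\nabla\ta\|_{H^{N}}^{2}$, $\|\nabla\b\|_{H^{N}}^{2}$, $\|\nabla a\|^{2}$, $\|\n\cdot\nabla\psi\|^{2}$, $\|\n\cdot\nabla\u\|^{2}$ at the relevant levels plus the low-order norms from \eqref{Sob} and Poincar\'e), with $\delta$ small and the nonlinear terms estimated by product/commutator inequalities in $H^{N}$ (using the pointwise bounds on $\rho,\ta$), all nonlinear contributions are $\lesssim\sqrt{\mathcal{E}}\,\mathcal{D}$, so
\begin{equation*}
\frac{d}{dt}\mathcal{E}(t)+c\,\mathcal{D}(t)\le C\sqrt{\mathcal{E}(t)}\,\mathcal{D}(t).
\end{equation*}
For $\varepsilon$ small, $\mathcal{E}(t)\le\mathcal{E}(0)\le C\varepsilon^{2}$ for all $t$ and $\int_{0}^{\infty}\mathcal{D}\,dt<\infty$, which yields the global solution in $C([0,\infty);H^{N})$.

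\emph{Algebraic decay and the main obstacle.} Because \eqref{Sob} loses $r$ derivatives, $\mathcal{D}$ controls $\u$, and through the above also $a$, only about $r+4$ levels below the top --- the source of the exponent in the theorem. Running the preceding estimates with time weights $(1+t)^{k}$ along a ladder of regularity levels and interpolating a low decaying norm against the uniformly bounded $H^{N}$ norm, one gets successively faster decay, ending at $\|(a,\u,\ta,\b)(t)\|_{H^{r+4}}\le C(1+t)^{-3/2}$; then the interpolation $\|f\|_{H^{\beta}}\le\|f\|_{H^{r+4}}^{1-\lambda}\|f\|_{H^{N}}^{\lambda}$ with $\lambda=\frac{\beta-r-4}{N-r-4}$ (for $r+4\le\beta<N$), together with the uniform $H^N$ bound, produces the claimed rate $(1+t)^{-\frac{3(N-\beta)}{2(N-r-4)}}$. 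The main obstacle throughout is the complete lack of velocity dissipation: all damping for $\u$, and hence for the transport-dominated $a$, must be synthesized from the magnetic coupling and \eqref{Sob}, and --- the genuinely hard part --- the magnetic mechanism only sees the $\n$-transverse part of $\u$, so the $\n$-parallel (acoustic) mode, the density $a$, and the temperature coupling $P\div\u$ (which has no direct dissipation) have to be controlled through a delicate interplay of the acoustic and thermal structures with suitable combined quantities, in the spirit of \cite{WuZhu,WuZhai}; tracking the derivative losses in \eqref{Sob} so that every interaction functional, the non-mean-zero variables $\u,\ta$, and the degenerate acoustic modes are handled without exhausting the $H^{N}$ regularity is what pins the threshold at $N\ge4r+7$.
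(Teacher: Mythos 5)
Your overall architecture (local theory, bootstrap, interaction functionals exploiting the wave structures, the Diophantine inequality \eqref{Sob}, decay by interpolation against the bounded $H^{N}$ norm) is the paper's, but two of your key steps contain genuine gaps.

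First, the closing of the global bound. You assert a top-order inequality $\frac{d}{dt}\mathcal{E}+c\,\mathcal{D}\le C\sqrt{\mathcal{E}}\,\mathcal{D}$ with $\mathcal{E}\simeq\norm{(a,\u,\ta,\b)}{H^{N}}^2$ and conclude $\mathcal{E}(t)\le\mathcal{E}(0)$. This inequality is not obtainable: $\mathcal{D}$ can never control $\norm{\u}{H^{N}}^2$, because \eqref{Sob} loses $r>2$ derivatives and the interaction functionals cannot be run above order $N$ within $H^{N}$ regularity. Hence a top-order nonlinear contribution such as $\norm{\nabla\u}{L^\infty}\norm{\u}{H^{N}}^2$ is at best $\sqrt{\mathcal{D}}\,\mathcal{E}$, not $\sqrt{\mathcal{E}}\,\mathcal{D}$; absorbing it leaves $\frac{d}{dt}\mathcal{E}\lesssim\mathcal{E}^2$, which gives no uniform-in-time bound. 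The paper's resolution is a two-tier argument: the dissipative functionals are run only at order $r+3$/$r+4$, producing $\frac{d}{dt}\mathcal{E}+c\,\mathcal{E}^{4/3}\le0$ for a \emph{low-order} energy and hence $\norm{(a,\u,\ta,\b)}{H^{r+4}}\le C(1+t)^{-3/2}$; this decay makes the coefficient $Y_\infty(t)$ in the $H^{N}$ estimate \eqref{ed3} time-integrable, and Gr\"onwall then closes the bootstrap at $H^{N}$. Your time-weight ladder addresses the decay but not the uniform $H^{N}$ bound, which is the step that actually needs the decay as input.

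Second, the parallel acoustic mode. Testing $\b$ against ${\n}\cdot\nabla\u-\n\,\div\u$ indeed sees only the $\n$-transverse part of $\u$, and your substitute mechanism for $\psi=(\n\cdot\u)/|\n|^{2}$ creates a loop with $O(1)$ constants: the functional built on $\partial_t\psi=-\tfrac{R}{|\n|^{2}}\,{\n}\cdot\nabla(a+\ta)+\dots$ and $\partial_t(a+\ta)=-2\div\u+\kappa\Delta\ta+\dots$ buys $\norm{{\n}\cdot\nabla\psi}{L^2}^2$ at the cost of the \emph{perfect square} $\tfrac{R}{|\n|^{2}}\norm{{\n}\cdot\nabla(a+\ta)}{L^2}^2\le 2R\norm{\nabla a}{L^2}^2+\dots$, whose coefficient cannot be made small by Young's inequality, while the acoustic functional buys $\norm{\nabla a}{L^2}^2$ at the cost of $\norm{\div\u}{L^2}^2$, which contains $\norm{{\n}\cdot\nabla\psi}{L^2}^2$ again; tracking the weights shows closure needs roughly $R<1$, which fails for the normalization $R=1$ and is not available in general. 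The paper sidesteps the split entirely: it pairs $\b$ with ${\n}\cdot\nabla\u$ for the \emph{full} velocity (so \eqref{Sob} and the generalized Poincar\'e inequality control all of $\u$), disposes of the resulting $\big\langle\la^{s}(\n\div\u),\la^{s}({\n}\cdot\nabla\u)\big\rangle$ term by substituting $\div\u=-\partial_t\ta+\kappa\Delta\ta+f_3$ from the temperature equation (Proposition \ref{shiping1}), and extracts $\norm{\div\u}{H^{r+3}}^2$ separately from the $\ta$--$\div\u$ coupling (Proposition \ref{ww}), where the $\norm{\nabla a}{H^{r+3}}^2$ cost enters only through the cross term $\langle\nabla a,\nabla\ta\rangle$ and can be given an arbitrarily small coefficient at the expense of the genuinely dissipated $\norm{\nabla\ta}{H^{r+3}}^2$. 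This essential use of the temperature equation (cf.\ the first remark after Theorem \ref{dingli}) is the structural point your scheme misses.
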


This result rigorously confirms the stabilizing phenomenon observed in physical experiments involving electrically conducting fluids. The stability result, along with its proof, elucidates the mechanism by which the magnetic field exerts a stabilizing effect on compressible MHD flows. It provides an important example of how magnetic fields can suppress instabilities in inviscid fluid dynamics. We also highlight the significant stabilization results on inviscid flows obtained in the influential works \cite{guoyan1} and \cite{guoyan2}.

\begin{remark}
	It is not clear whether Theorem \ref{dingli} can be extended to the 3D
	inviscid
	isentropic compressible MHD equations. As we shall see in the proof of
	Theorem \ref{dingli}, we need an enhanced dissipation property on the
	quantity $\div \u$. This property is obtained by combining the equations of
	$\div \u$ and of $\theta$. Without the equation of $\theta$, it is not
	clear
	how to gain this extra regularity on $\div \u$.
\end{remark}

\begin{remark}
	Even though the focus of this paper is on the 3D case, a similar result on
	the corresponding 2D compressible MHD equations near a background
	satisfying the Diophantine condition can be established. Furthermore, in
	the 2D case, we can also prove the desired stability near a background
	magnetic field that is not even Diophantine if the initial perturbations
	obey some symmetry conditions.
\end{remark}

There are major difficulties in proving Theorem \ref{dingli}. When the magnetic
field is not present, the inviscid incompressible flow is governed by the
compressible Euler equations. As aforementioned, compressible Euler equations
develop finite-time singularities even when the initial data is smooth and
small. This makes the MHD stability problem appear impossible. The only hope is
that the magnetic field can smooth and stabilize the fluid.

\vskip .1in
This paper develops a very effective approach to maximally exploit the
smoothing and stabilizing effect due to coupling and interaction. The equation
for the perturbation of the density $a$ is given by
$$
\partial_t a + \div\u  =f_1,
$$
which involves no damping or dissipation. However, when it is coupled with
$\div\u$, their interaction generates a wave structure.  For simplicity, we
explain this stabilizing mechanism in terms of the
linearized equations
of $a$ and $\div \u$, which are given by
\begin{align*}
	& \partial_t a + \div \u =0,\\
	& \partial_t \div \u+ R\Delta a+ R\Delta \ta
	=-\Delta ({\n}\cdot\b).
\end{align*}
We can easily converted this system into the following wave equations
\begin{align*}
	& \partial_{tt} a - R \Delta a = R \Delta \theta - \Delta({\n}\cdot\b) ,\\
	& \partial_{tt} \div \u - R\Delta\div \u =- R\Delta \partial_t\ta
	-\Delta ({\n}\cdot\partial_t\b).
\end{align*}
Making use of this structure by constructing suitable Lyapunov functional, we
are able to obtain the dissipative effect of $a$. In fact, under the
bootstrapping argument assumption that
$$
\norm{(a(t), \u(t), \ta(t),
\b(t))}{H^{N}}\le \delta,
$$
we obtain
\begin{align}
	&\norm{\nabla a}{H^{{r+3}}}^2+\sum_{0\le s\le
	r+3}\frac{d}{dt}{\big\langle{\Lambda^{s}} \u,{\Lambda^{s}}(\nabla
	a)\big\rangle} \nn\\
	&\quad\le C\norm{\div\u}{H^{{r+3}}}^2+
	C(1+\delta^2)\norm{ (\ta,\b)}{H^{r+4}}^2+ C\delta^2 \norm{{\n}\cdot\nabla
	\u}{H^{r+3}}^2, \label{tt}
\end{align}
which allows us to obtain the time integrability of $\norm{\nabla
a}{H^{{r+3}}}^2$. However, this process also generates two bad terms,
$\norm{\div\u}{H^{{r+3}}}^2$ and $\norm{{\n}\cdot\nabla
\u}{H^{r+3}}^2$. We need to obtain their time integrability in order to bound
the time integral of $\norm{\nabla
a}{H^{{r+3}}}^2$.

\vskip .1in
Due to the lack of damping and dissipation in the equation of $\u$, the time
integrability of $\norm{{\n}\cdot\nabla \u}{H^{r+3}}^2$ appears impossible if
we follow classical approaches. We are able to discover the mathematical
mechanism behind the stabilizing phenomenon observed in physical experiments.
Mathematically the interaction of the fluid and the magnetic field
near a background magnetic field generates a wave structure. For the sake of
simplicity, we consider the linearized system of $\u$ and $\b$,
\begin{align*}
	&\partial_t \u+ R\nabla a+ R\nabla \ta
	={\n}\cdot\nabla \b-\nabla ({\n}\cdot\b),\\
	&\partial_t \b-\sigma\Delta\b={\n}\cdot\nabla \u-\n\div\u.
\end{align*}
After ignoring the irrelevant terms $R\nabla a+ R\nabla \ta$, we obtain
the following degenerate wave equations
\begin{align*}
	&		\partial_{tt} \u - \sigma \Delta \partial_t \u -
	({\n}\cdot\nabla)^2 \u
	=-\nabla((\n\otimes \n)\cdot \nabla \u) +\nabla\div \u- (\n\cdot
	\nabla\div\u)\n, \\
	&	\partial_{tt} \b - \sigma \Delta \partial_t \b - ({\n}\cdot\nabla)^2
	\b
	=-\nabla((\n\otimes \n)\cdot \nabla \b) +\n \Delta
	({\n}\cdot\b).
\end{align*}
$\u$ and $\b$ share a very similar wave structure. In comparison with the
original equation of $\u$, the wave equation contains two extra regularizing
terms. $- \sigma \Delta \p_t \u$ comes from the magnetic diffusion and $-
({\n}\cdot\nabla)^2 \u$ is due to the magnetic field. $- ({\n}\cdot\nabla)^2
\u$ allows us to control the directional derivative of $\u$ along the
background magnetic
field. This reflects the observed stabilizing effect of fluids in
the direction of the background magnetic field. Making use of this special wave
structure, we are able to establish the following estimate
\begin{align}
	&\norm{{\n}\cdot\nabla \u}{H^{r+3}}^2-\sum_{0\le s\le
		r+3}\frac{d}{dt}\big\langle{\Lambda^s}\b,{\Lambda^s}({\n}\cdot\nabla
	\u)\big\rangle\le C\norm{ \b}{H^{r+5}}^2+C\norm{
		\ta}{H^{r+5}}^2+   \frac18\norm{ \nabla a}{H^{r+3}}^2.\label{tt1}
\end{align}
The time integrability of $\norm{{\n}\cdot\nabla \u}{H^{r+3}}^2$ follows as a
special consequence.

\vskip .1in
The time integrability of $\norm{\div\u}{H^{{r+3}}}^2$ doesn't appear to be
trivial due to the fact that   the equation of
$\u$ is inviscid. However, by exploring the interaction of $\div\u$ and
$\ta$, we are
able to capture the wave structure. Again we explain this discovery in terms of
the linearized system,
\begin{align*}
	&\partial_t \div\u+ R\Delta a+ R\Delta \ta
	=-\Delta ({\n}\cdot\b),\\
	&\partial_t\theta-\kappa\Delta\ta+ \div \u=0.
\end{align*}
We converted into the following wave equations
\begin{align*}
	& \partial_{tt} a - R \Delta a = R \Delta \theta - \Delta({\n}\cdot\b) ,\\
	& \partial_{tt} \div \u - R\Delta\div \u =- R\Delta \partial_t\ta
	-\Delta ({\n}\cdot\partial_t\b),
\end{align*}
which allows us to gain the following time integrability inequality
\begin{align}
	&\norm{\div\u}{H^{{r+3}}}^2+\sum_{0\le s\le
	r+3}\frac{d}{dt}{\big\langle{\Lambda^{s}}
	\ta,{\Lambda^{s}}\div\u\big\rangle}
	\nn\\
	&\quad\le(\frac18+\delta^2)\norm{ \nabla a}{H^{r+3}}^2+C\norm{
	\ta}{H^{r+5}}^2+C\delta^2\norm{ (\ta,\b)}{H^{r+4}}^2+ C\delta^2
	\norm{{\n}\cdot\nabla \u}{H^{r+3}}^2. \label{tt2}
\end{align}
Equation \eqref{tt2} reflects the influence of temperature on the divergence of the velocity field. This relation is also physically meaningful, as temperature directly affects the compressibility of the fluid,  governing how it expands or contracts through the divergence of the velocity field.

\vskip .1in
Having gained the bounds in (\ref{tt}), (\ref{tt1}) and (\ref{tt2}), the
strategy next is to prove an energy estimate of the form, for any integer
$\ell\ge
0$,
\begin{align}
	&\frac12\frac{d}{dt}\left(\norm{(a,\u,\ta,\b)}{H^{\ell}}^2
	+\int_{\T^3}\frac{\ta+a^2}{(1+a)^2}(\la^{\ell}a)^2 \,dx
	\right)
	+\kappa\norm{\nabla\ta}{H^{{\ell}}}^2
	+\sigma\norm{\nabla\b}{H^{{\ell}}}^2\nn\\
	&\qquad\le C
	Y_{\infty}(t)\norm{(a,\u,\ta,\b)}{H^{\ell}}^2, \label{tt3}
\end{align}
where $Y_{\infty}(t)$ essentially contains the $L^\infty$-norm of the low-order
derivatives, namely
\begin{align*}
	Y_{\infty}(t)= &
	\|(a,\u,\ta,\b)\|_{L^{\infty}}+(1+\|a\|_{L^{\infty}}^2)\|(a,\u,\ta,\b)\|_{L^{\infty}}^2
	+(1+
	\norm{a}{L^\infty})\norm{(\nabla a,\nabla\u,\nabla
		\ta,\nabla\b)}{L^\infty}\nn\\
	&+\norm{\Delta
		\ta}{L^\infty}+(1+\|(a,\u,\b)\|_{L^{\infty}}^2+\|\nabla\u\|_{L^{\infty}}^2)\norm{(\nabla
		a,\nabla\u,\nabla \ta,\nabla\b)}{L^\infty}^2.
\end{align*}
A very technical spot in the proof of \eqref{tt3} is due to the lack of
dissipation in the equation of $\u$. More precisely, when we estimate the
Sobolev norm $\|a\|_{H^\ell}$, we need to deal with the term
$$
\int_{\T^3} a \Lambda^{\ell} \div \u \, \Lambda^{\ell} a\,dx,
$$
which generates $(\ell+1)$-derivative on $\u$. This difficult situation is dealt
with by substituting the equation
$$
 \div\u=-\frac{\partial_t a+\u\cdot\nabla a}{1+a},
$$
which helps increase the degree of nonlinearity and spread the derivatives.
More technical details can be found in Section \ref{sec:high}.

\vskip .1in
A suitable combination of (\ref{tt}), (\ref{tt1}) and (\ref{tt2}) with
\eqref{Sob} and \eqref{tt3} allows us to control the right-hand side of
(\ref{tt3}) and convert \eqref{tt3} into an equation of the form
\begin{align*}
	\frac{d}{dt}{\mathcal{E}(t)}+c({\mathcal{E}(t)})^{\frac43}\le 0,
\end{align*}
which yields the desired decay rates. The precise definition of $\mathcal{E}$
is given in Section \ref{sec:pro}.

\vskip .1in
The rest of this paper is divided into seven sections. Section \ref{sec:l2}
presents the global uniform (in time) $L^2$-bound on the solution of
\eqref{m2}. Section \ref{sec: tap} prepares a generalized Poincar\'e type
inequality for $\ta$. Section \ref{sec:high} derives the energy estimate on
the solution of \eqref{m2} in the Sobolev space $H^\ell$. The main result
is stated in Proposition \ref{high2}. Section \ref{sec:adiss} discovers
and exploits the wave structure in the coupled system of $a$ and $\div\u$.
The main result is the estimate in \eqref{tt}. Section \ref{sec:nudiss}
combines the equations of $\u$ and $\b$ to derive the wave structure and
establish \eqref{tt1}. Section \ref{sec:dudiss} makes use of the equations of
$\div\u$ and $\theta$ to obtain the wave structure and thus prove \eqref{tt2}.
The last section combines the estimates above and apply the bootstrapping
argument to finish the proof of our main result.

\vskip .3in
\section{Global and uniform $L^2$-bound}
\label{sec:l2}

This section presents the global $L^2$ bound on $(a,\u,\ta,\b)$. The following
Sobolev space inequalities will be used frequently.

\begin{lemma}\label{daishu}{\rm(\cite{kato})} For any $s\ge 0$, there exists a
	positive constant $C=C(s)$ such that, for any $f,g\in {H^{s}}(\T^3)\cap
	{L^\infty}(\T^3)$, we have
	\begin{equation}\label{mor}
		\|fg\|_{H^{s}}\le
		C(\|f\|_{L^\infty}\|g\|_{H^{s}}+\|g\|_{L^\infty}\|f\|_{H^{s}}).
	\end{equation}
\end{lemma}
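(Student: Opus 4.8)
The plan is to establish the Moser-type product inequality \eqref{mor} by a Littlewood--Paley (Bony paraproduct) decomposition, which has the advantage of covering all real $s\ge 0$ uniformly; the case $s=0$ needs no argument since $\|fg\|_{L^2}\le\|f\|_{L^\infty}\|g\|_{L^2}$, so I would assume $s>0$ throughout. Let $(\Delta_j)_{j\ge-1}$ be a dyadic Littlewood--Paley partition of unity on $\T^3$, $S_j=\sum_{k\le j-1}\Delta_k$, and $\widetilde{\Delta}_j=\Delta_{j-1}+\Delta_j+\Delta_{j+1}$, so that $\|f\|_{H^s}^2\simeq\|\Delta_{-1}f\|_{L^2}^2+\sum_{j\ge 0}2^{2js}\|\Delta_j f\|_{L^2}^2$. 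The first step is Bony's splitting
\[
fg=T_fg+T_gf+R(f,g),\qquad T_fg:=\sum_j S_{j-1}f\,\Delta_j g,\quad R(f,g):=\sum_j \Delta_j f\,\widetilde{\Delta}_j g ,
\]
after which the plan is to bound each of the three pieces in $H^s$ by $\|f\|_{L^\infty}\|g\|_{H^s}$ or by $\|g\|_{L^\infty}\|f\|_{H^s}$. Any genuinely low-frequency contribution, in particular the $j=-1$ block, is absorbed by the trivial bound $\|fg\|_{L^2}\le\|f\|_{L^\infty}\|g\|_{L^2}$.

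For the two paraproducts, I would use that the $j$-th summand $S_{j-1}f\,\Delta_j g$ has Fourier support in a dyadic annulus of size $\sim 2^j$, so that almost orthogonality together with the uniform bound $\|S_{j-1}f\|_{L^\infty}\le C\|f\|_{L^\infty}$ yields
\[
\|T_fg\|_{H^s}^2\le C\sum_j 2^{2js}\|S_{j-1}f\|_{L^\infty}^2\,\|\Delta_j g\|_{L^2}^2\le C\|f\|_{L^\infty}^2\,\|g\|_{H^s}^2 ,
\]
and symmetrically $\|T_gf\|_{H^s}\le C\|g\|_{L^\infty}\|f\|_{H^s}$; these are already in the desired form.

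The step I expect to be the main obstacle is the resonant (high--high) term $R(f,g)$: its dyadic pieces $\Delta_j f\,\widetilde{\Delta}_j g$ have Fourier support in a ball of radius $\sim 2^j$ rather than in an annulus, so the weight $2^{qs}$ cannot simply be distributed by a Bernstein inequality. My plan here is to note that for fixed $q$ only the indices $j\ge q-N_0$ (with $N_0$ fixed by the partition) contribute to $\Delta_q R(f,g)$, to estimate $\|\Delta_q R(f,g)\|_{L^2}\le C\sum_{j\ge q-N_0}\|\Delta_j f\|_{L^\infty}\|\widetilde{\Delta}_j g\|_{L^2}$, and then to write, with $a_j:=2^{js}\|\widetilde{\Delta}_j g\|_{L^2}\in\ell^2$,
\[
2^{qs}\|\Delta_q R(f,g)\|_{L^2}\le C\|f\|_{L^\infty}\sum_{j\ge q-N_0}2^{(q-j)s}a_j .
\]
The right-hand side is the discrete convolution of $(a_j)\in\ell^2$ with the sequence $(2^{-ks}\mathbf{1}_{\{k\ge-N_0\}})$, which lies in $\ell^1$ exactly because $s>0$; Young's inequality for sequences then gives $\|R(f,g)\|_{H^s}\le C\|f\|_{L^\infty}\|g\|_{H^s}$, and by symmetry also $\le C\|g\|_{L^\infty}\|f\|_{H^s}$. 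Summing the three contributions proves \eqref{mor}.

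For integer $s$ one could instead avoid Littlewood--Paley altogether: expand $D^\alpha(fg)$ by the Leibniz rule and bound each $\|D^\beta f\,D^{\alpha-\beta}g\|_{L^2}$ by Hölder's inequality together with the Gagliardo--Nirenberg estimates $\|D^\beta f\|_{L^{2s/|\beta|}}\le C\|f\|_{L^\infty}^{1-|\beta|/s}\|f\|_{\dot H^s}^{|\beta|/s}$, then reassemble. I would nonetheless prefer the Littlewood--Paley route precisely because it handles non-integer $s$ without change, the genuinely delicate point in either approach being the top-order high--high interaction treated above.
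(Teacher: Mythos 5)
Your proposal is correct. Note, however, that the paper offers no proof of this lemma at all: it is quoted verbatim from the reference \cite{kato} (and such tame product estimates also appear in standard texts on paradifferential calculus), so there is no in-paper argument to compare against. Your Littlewood--Paley/Bony route is the standard modern proof of the tame estimate \eqref{mor}: the two paraproducts $T_fg$ and $T_gf$ are handled exactly as you say via almost orthogonality of the annuli and $\|S_{j-1}f\|_{L^\infty}\le C\|f\|_{L^\infty}$, and you correctly identified the only delicate point, namely that the remainder $R(f,g)$ has dyadic blocks supported in balls rather than annuli, so the $\ell^1$--$\ell^2$ convolution argument requires $s>0$ (the geometric tail $\sum_{k\ge -N_0}2^{-ks}$ must converge); your separate treatment of $s=0$ by the trivial H\"older bound closes that case. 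The computation $2^{qs}\|\Delta_q R(f,g)\|_{L^2}\le C\|f\|_{L^\infty}\sum_{j\ge q-N_0}2^{(q-j)s}a_j$ with $a_j=2^{js}\|\widetilde{\Delta}_j g\|_{L^2}\in\ell^2$ is exactly right, and Young's inequality for sequences finishes it. The alternative you sketch for integer $s$ (Leibniz plus Gagliardo--Nirenberg interpolation $\|D^\beta f\|_{L^{2s/|\beta|}}\le C\|f\|_{L^\infty}^{1-|\beta|/s}\|f\|_{\dot H^s}^{|\beta|/s}$) is the classical Moser argument and is what Kato-type references typically use; the paraproduct version buys you all real $s\ge 0$ at once, which matters here since the paper applies the lemma with $s$ ranging over non-integer values such as $s=r+3$ with $r>2$ merely real. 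I see no gap.
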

\begin{lemma}\label{jiaohuanzi}{\rm(\cite{kato})}
	For any $s\ge 0$, there exists a
	positive constant $C=C(s)$ such that, for any $f\in {H^{s}}(\T^3)\cap
	W^{1,\infty}(\T^3)$, $g\in
	{H^{s-1}}(\T^3)\cap {L^\infty}(\T^3)$, there holds
	\begin{align*}
		\norma{[\la^s,f\cdot\nabla ]g}{L^2}\le C(\norm{\nabla
		f}{L^\infty}\norm{\la^sg}{L^2}+\norm{\la^s f}{L^2}\norm{\nabla
		g}{L^\infty}).
	\end{align*}
\end{lemma}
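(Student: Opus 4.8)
The final statement is the classical Kato--Ponce commutator estimate, quoted from \cite{kato}; here is how I would establish it. The plan is to first reduce the vector-field commutator to the scalar one, then prove the scalar estimate by a Littlewood--Paley/paraproduct decomposition. Since $\Lambda^s=(-\Delta)^{s/2}$ and each $\partial_k$ are Fourier multipliers on $\T^3$, they commute, so writing $f\cdot\nabla=\sum_k f_k\partial_k$ and setting $h_k:=\partial_k g$ gives, componentwise,
\[
[\Lambda^s, f\cdot\nabla]g=\sum_k\big(\Lambda^s(f_k\,\partial_k g)-f_k\,\Lambda^s\partial_k g\big)=\sum_k[\Lambda^s, f_k]h_k .
\]
Thus it suffices to prove the scalar estimate $\norm{[\Lambda^s, f]h}{L^2}\le C(\norm{\nabla f}{L^\infty}\norm{\Lambda^{s-1}h}{L^2}+\norm{\Lambda^s f}{L^2}\norm{h}{L^\infty})$ and sum over $k$. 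The two terms then match the statement exactly: because $\norm{\Lambda^{s-1}\nabla g}{L^2}^2=\sum_{\xi}|\xi|^{2(s-1)}|\xi|^2|\hat g(\xi)|^2=\norm{\Lambda^s g}{L^2}^2$ we recover $\norm{\Lambda^s g}{L^2}$, and $\norm{h_k}{L^\infty}\le\norm{\nabla g}{L^\infty}$. (The zero Fourier mode of $h$ is annihilated by the commutator, so only nonzero modes matter.)

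For the scalar commutator I would use a Bony paraproduct splitting. With $\Delta_j$ the Littlewood--Paley blocks and $S_j=\sum_{k<j}\Delta_k$, organize the double sum by comparing the frequencies of $f$ and $h$:
\[
[\Lambda^s,f]h=\underbrace{\sum_j[\Lambda^s,S_{j-2}f]\Delta_j h}_{\mathrm{I}}+\underbrace{\sum_j[\Lambda^s,\Delta_j f]S_{j-2}h}_{\mathrm{II}}+\underbrace{\sum_{|j-j'|\le 1}[\Lambda^s,\Delta_j f]\Delta_{j'}h}_{\mathrm{III}}.
\]
Term $\mathrm{I}$ (low frequency on $f$, high on $h$) is the heart of the matter and the only place genuine cancellation is needed. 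Here $S_{j-2}f$ lives at frequencies $\lesssim 2^{j-2}$ and $\Delta_j h$ at $\sim 2^j$; a first-order Taylor expansion of the smooth, homogeneous symbol $|\xi|^s$ about the frequency of $\Delta_j h$ (the Coifman--Meyer commutator lemma) produces a gain of one derivative,
\[
\norm{[\Lambda^s,S_{j-2}f]\Delta_j h}{L^2}\lesssim 2^{j(s-1)}\norm{\nabla f}{L^\infty}\norm{\Delta_j h}{L^2}.
\]
Since these summands are frequency-localized at $\sim 2^j$, they are almost orthogonal, and $\ell^2$-summation yields $\norm{\nabla f}{L^\infty}\norm{\Lambda^{s-1}h}{L^2}$, the first term on the right.

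For $\mathrm{II}$ (high on $f$, low on $h$) and $\mathrm{III}$ (comparable frequencies) no cancellation is required: I would bound each product crudely by placing $\Lambda^s$ on $f$ (cost $2^{js}$) and measuring $h$ in $L^\infty$, using Bernstein and the $L^\infty$-boundedness of $\Delta_j,S_j$, which gives $\lesssim\norm{\Lambda^s f}{L^2}\norm{h}{L^\infty}$ after $\ell^2$-summation. The one technical wrinkle is that the output of $\mathrm{III}$ is not frequency-localized, so its reassembly in $L^2$ is handled by projecting onto output shells and applying a Schur/Young estimate, which converges for $s>0$ (the case $s=0$ being trivial since the commutator vanishes). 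Collecting the three contributions and invoking the reduction above completes the proof. I expect the symbol-smoothness estimate in Term $\mathrm{I}$ to be the main obstacle, with the $\ell^2$ bookkeeping for the remainder $\mathrm{III}$ a secondary technical point.
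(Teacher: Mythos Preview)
The paper does not supply a proof of this lemma; it is simply quoted from \cite{kato} as a known inequality and used as a black box throughout the energy estimates. There is therefore no ``paper's own proof'' to compare against.

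Your sketch is the standard modern route to the Kato--Ponce commutator estimate and is correct in outline. The reduction to the scalar commutator via $[\Lambda^s,f\cdot\nabla]g=\sum_k[\Lambda^s,f_k]\partial_k g$ is clean, and the identity $\norm{\Lambda^{s-1}\partial_k g}{L^2}=\norm{\Lambda^s g}{L^2}$ (on nonzero modes) recovers exactly the right-hand side in the statement. The Bony decomposition into low--high, high--low, and diagonal pieces, with the Coifman--Meyer first-order symbol expansion supplying the derivative gain in Term~I, is the textbook argument; your identification of Term~I as the substantive step and of the $s>0$ convergence in the remainder~III as the secondary technicality is accurate. Nothing is missing at the level of a proof plan.
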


\begin{lemma}\label{fuhe}{\rm(\cite{Triebel})}
	Let $s>0$, $f\in H^s(\T^3)\cap L^\infty(\T^3)$. Assume that $F$ is a smooth
	function  on $\R$ with $F(0)=0$. Then we
	have
	$$
	\|F(f)\|_{H^s}\le C(1+\|f\|_{L^\infty})^{[s]+1}\|f\|_{H^s}
	$$
	where the constant $C$ depends on $\sup_{k\le{[s]+2},t\le\|f\|_{L^\infty}}
	\|F^k(t)\|_{L^\infty}.$
\end{lemma}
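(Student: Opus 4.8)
The plan is to establish this Moser-type composition estimate by reducing it to a homogeneous seminorm bound and then splitting the analysis into an integer part, handled by the chain rule together with Gagliardo--Nirenberg interpolation, and a fractional part, which is the genuine difficulty. Write $n=[s]$ and $\sigma=s-n\in[0,1)$. Since $\|F(f)\|_{H^s}^2\sim\|F(f)\|_{L^2}^2+\|\Lambda^s F(f)\|_{L^2}^2$, the low-frequency piece is immediate: because $F(0)=0$ we have the pointwise bound $|F(f)|\le\big(\sup_{|t|\le\|f\|_{L^\infty}}|F'(t)|\big)|f|$, so $\|F(f)\|_{L^2}\le C\|f\|_{L^2}$ with $C$ of the admissible form. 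Everything therefore rests on controlling the seminorm $\|\Lambda^s F(f)\|_{L^2}$.

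For the top-order integer derivatives I would invoke the Fa\`a di Bruno formula: for any multi-index $\alpha$ with $|\alpha|=m\le n$,
\[
\partial^\alpha F(f)=\sum_{k=1}^{m}F^{(k)}(f)\sum_{\substack{\beta_1+\cdots+\beta_k=\alpha\\|\beta_i|\ge1}}c_{\beta}\,\partial^{\beta_1}f\cdots\partial^{\beta_k}f.
\]
Each scalar factor $F^{(k)}(f)$ is bounded in $L^\infty$ by $\sup_{|t|\le\|f\|_{L^\infty}}|F^{(k)}(t)|$, which is exactly the quantity the constant $C$ is permitted to depend on. For the product of derivatives I would apply H\"older with exponents $p_i=2m/|\beta_i|$ together with the Gagliardo--Nirenberg inequality $\|\partial^{\beta_i}f\|_{L^{p_i}}\le C\|f\|_{L^\infty}^{1-|\beta_i|/m}\|\Lambda^m f\|_{L^2}^{|\beta_i|/m}$; since the exponents $|\beta_i|/m$ sum to $1$, the product telescopes to $\big\|\prod_i\partial^{\beta_i}f\big\|_{L^2}\le C\|f\|_{L^\infty}^{k-1}\|\Lambda^m f\|_{L^2}$. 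This is precisely the mechanism that produces the power of $\|f\|_{L^\infty}$: a term with $k$ derivative factors contributes $(k-1)$ copies of $\|f\|_{L^\infty}$, and $k$ ranges up to $m\le n=[s]$. Summing over $k$ and over $m\le n$ yields the integer estimate $\|F(f)\|_{H^n}\le C(1+\|f\|_{L^\infty})^{n-1}\|f\|_{H^n}$.

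The main obstacle is the fractional gap $\sigma=s-n>0$. One cannot simply interpolate between $H^n$ and $H^{n+1}$, because $f$ is only assumed to lie in $H^s$ and the norm $\|f\|_{H^{n+1}}$ is unavailable. I would instead estimate the fractional seminorm directly, using the Gagliardo representation
\[
\|\Lambda^s F(f)\|_{L^2}^2\sim\sum_{|\alpha|=n}\iint_{\T^3\times\T^3}\frac{|\partial^\alpha F(f)(x)-\partial^\alpha F(f)(y)|^2}{|x-y|^{3+2\sigma}}\,dx\,dy,
\]
inserting the Fa\`a di Bruno expansion and splitting each difference as $[F^{(k)}(f(x))-F^{(k)}(f(y))]P_k(x)+F^{(k)}(f(y))[P_k(x)-P_k(y)]$, where $P_k$ denotes the associated product of derivatives. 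The first piece is controlled by $\|F^{(k+1)}\|_{L^\infty}|f(x)-f(y)|$, which is why one more derivative of $F$ is needed and the admissible index climbs to $k+1\le n+1=[s]+1$; bounding the symbol-type coefficients that arise when differencing $P_k$ costs one further derivative, explaining the appearance of $F^{([s]+2)}$ in the constant and the (non-optimal but safe) inflation of the exponent to $[s]+1$. A cleaner systematic alternative, which treats all $s>0$ at once and is the route I would actually write up, is Bony's telescoping decomposition $F(f)=\sum_{q\ge-1}\big(F(S_{q+1}f)-F(S_q f)\big)=\sum_{q\ge-1}R_q\,\Delta_q f$ with $R_q=\int_0^1 F'(S_qf+t\Delta_qf)\,dt$; here $\{R_q\}$ is uniformly bounded by $\sup_{|t|\le\|f\|_{L^\infty}}|F'(t)|$, its Littlewood--Paley derivatives are estimated through Fa\`a di Bruno (each producing the same $\|f\|_{L^\infty}$ factors as above), and summing the dyadic contributions $2^{2qs}\|\Delta_p(R_q\Delta_q f)\|_{L^2}^2$ reproduces the stated bound. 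Either way the crux is the careful bookkeeping of how many derivative factors appear, which simultaneously fixes the power of $\|f\|_{L^\infty}$ and the highest derivative $F^{([s]+2)}$ entering the constant.
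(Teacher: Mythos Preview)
The paper does not prove this lemma; it is simply quoted from Triebel's monograph, so there is no in-paper argument to compare against. Your outline is essentially the standard route to Moser-type composition estimates, and the Meyer telescoping formula $F(f)=\sum_q m_q\,\Delta_q f$ with $m_q=\int_0^1 F'(S_qf+t\Delta_qf)\,dt$ is indeed the cleanest way to handle all $s>0$ at once; combined with Bernstein's inequality to bound $\|\partial^\alpha m_q\|_{L^\infty}\lesssim 2^{q|\alpha|}(1+\|f\|_{L^\infty})^{|\alpha|}$ (with constants depending on $F^{(k)}$ for $k\le|\alpha|+1$), it yields the stated bound with exactly the bookkeeping you describe.

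Your first approach to the fractional part, via the Gagliardo double integral, is more delicate than you indicate. The cross term $[F^{(k)}(f(x))-F^{(k)}(f(y))]\,P_k(x)$ leads to an integral of the form $\iint |f(x)-f(y)|^2|P_k(x)|^2|x-y|^{-3-2\sigma}\,dx\,dy$, and since $P_k$ is a product of derivatives of $f$ (not an $L^\infty$ function), this does not reduce to a plain $H^\sigma$ seminorm of $f$; one genuinely needs to place $P_k$ in some $L^p$, put the fractional difference quotient of $f$ in $W^{\sigma,q}$, and invoke a Gagliardo--Nirenberg interpolation to close. You gesture at this with ``bounding the symbol-type coefficients \ldots\ costs one further derivative,'' but making that step rigorous is real work. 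If you write this up, commit to the paraproduct route and drop the double-integral detour.
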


\vskip .1in
When a vector  ${\n}\in\R^3$ satisfies the Diophantine condition
\eqref{diufantu}, the Sobolev norm of the directional derivative of any
function $f$ along ${\n}\in\R^3$ can actually control a lower-order Sobolev
norm of $f$. The
precise statement is given in the following lemma.

\begin{lemma}\label{dir}
Let ${\n}\in\R^3$ satisfy the Diophantine condition \eqref{diufantu}.
\begin{itemize}
  \item For any $s\in \mathbb R$,  if $\int_{\T^3}f\,dx=0$,
  there holds
  \begin{equation}\label{poin1}
\|f\|_{H^{s}}\le C\|{\n}\cdot\nabla f\|_{H^{s+r}}.
\end{equation}
  \item For any $s>0$, one can remove the zero-mean condition by using
  homogeneous norms. That is, if $s>0$, there holds, for any $f$, that
\begin{equation}\label{poin2}
\|f\|_{\dot{H}^{s}}\le C\|{\n}\cdot\nabla f\|_{H^{s+r}}.
\end{equation}
\end{itemize}
\end{lemma}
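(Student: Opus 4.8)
The plan is to work entirely on the Fourier side over $\T^3$, where the Diophantine condition \eqref{diufantu} has its natural meaning. Expand $f=\sum_{\mathbf k\in\Z^3}\hat f(\mathbf k)e^{i\mathbf k\cdot x}$, so that $\widehat{\n\cdot\nabla f}(\mathbf k)=i(\n\cdot\mathbf k)\,\hat f(\mathbf k)$ and, up to a fixed normalization constant, $\|g\|_{H^\mu}^2=\sum_{\mathbf k}(1+|\mathbf k|^2)^{\mu}|\hat g(\mathbf k)|^2$ by Plancherel. For the first item, the hypothesis $\int_{\T^3}f\,dx=0$ says $\hat f(\mathbf 0)=0$, so every sum below runs over $\mathbf k\in\Z^3\setminus\{\mathbf 0\}$; I would then estimate, using $|\n\cdot\mathbf k|^2\ge c^2|\mathbf k|^{-2r}$ from \eqref{diufantu},
\begin{align*}
\|\n\cdot\nabla f\|_{H^{s+r}}^2
=\sum_{\mathbf k\neq\mathbf 0}(1+|\mathbf k|^2)^{s+r}|\n\cdot\mathbf k|^2|\hat f(\mathbf k)|^2
\ \ge\ c^2\sum_{\mathbf k\neq\mathbf 0}\frac{(1+|\mathbf k|^2)^{s+r}}{|\mathbf k|^{2r}}\,|\hat f(\mathbf k)|^2 .
\end{align*}

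The remaining step is the elementary symbol comparison $(1+|\mathbf k|^2)^{s+r}|\mathbf k|^{-2r}\ge(1+|\mathbf k|^2)^{s}$ valid for all $\mathbf k\in\Z^3\setminus\{\mathbf 0\}$: such $\mathbf k$ have $|\mathbf k|\ge1$, hence $1+|\mathbf k|^2\ge|\mathbf k|^2>0$, and since $r>0$ this gives $\big((1+|\mathbf k|^2)/|\mathbf k|^2\big)^{r}=(1+|\mathbf k|^{-2})^{r}\ge1$. Substituting this bound yields $\|\n\cdot\nabla f\|_{H^{s+r}}^2\ge c^2\sum_{\mathbf k\neq\mathbf 0}(1+|\mathbf k|^2)^{s}|\hat f(\mathbf k)|^2=c^2\|f\|_{H^s}^2$, which is \eqref{poin1} with $C=c^{-1}$.

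For the second item I would reduce to the first by removing the mean. Set $\bar f=|\T^3|^{-1}\int_{\T^3}f\,dx$ and $g=f-\bar f$; then $g$ has zero mean, $\n\cdot\nabla f=\n\cdot\nabla g$ (since $\n\cdot\nabla$ annihilates constants), and $\hat f(\mathbf k)=\hat g(\mathbf k)$ for $\mathbf k\neq\mathbf 0$, so $\|f\|_{\dot H^{s}}=\|g\|_{\dot H^{s}}$ because the homogeneous norm does not see the zero mode. Running the same computation for $g$ and using instead the comparison $(1+|\mathbf k|^2)^{s+r}|\mathbf k|^{-2r}\ge|\mathbf k|^{2s}$ — which for $s>0$ follows from $(1+|\mathbf k|^2)^{s+r}\ge|\mathbf k|^{2s+2r}$ — gives $\|\n\cdot\nabla g\|_{H^{s+r}}^2\ge c^2\sum_{\mathbf k\neq\mathbf 0}|\mathbf k|^{2s}|\hat g(\mathbf k)|^2=c^2\|g\|_{\dot H^{s}}^2$, hence \eqref{poin2}.

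I do not expect a genuine obstacle: the argument is a one-line application of the Diophantine condition followed by bookkeeping with Bessel weights. The only points demanding care are (i) restricting all sums to $\mathbf k\neq\mathbf 0$, which is precisely why the zero-mean assumption (respectively, the homogeneous norm) is needed, and (ii) the two elementary weight inequalities, both of which rely only on $|\mathbf k|\ge1$ for nonzero lattice points together with $r>0$ (respectively $s>0$).
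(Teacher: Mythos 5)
Your argument is correct and is essentially the paper's own proof: both proceed by Plancherel, apply the Diophantine lower bound $|\n\cdot\mathbf k|\ge c|\mathbf k|^{-r}$ on the nonzero modes, and conclude with the elementary weight comparisons $(1+|\mathbf k|^2)^{s+r}|\mathbf k|^{-2r}\ge(1+|\mathbf k|^2)^{s}$ (for any $s$) and $\ge|\mathbf k|^{2s}$ (for $s>0$). The only cosmetic difference is that you make the mean-subtraction in the second item explicit, whereas the paper simply notes that the zero mode does not enter the homogeneous norm; both are fine.
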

\begin{proof}
We give the proof for completeness. By Plancherel's formula,
\begin{align*}
\|{\n}\cdot\nabla f\|_{H^{s+r}}^2=&\sum_{{\mathbf{k}}\in\Z^3}(1+|{\mathbf{k}}|^2)^{s+r}|\n\cdot{\mathbf{k}}|^2|\hat{f}|^2\\
=&\sum_{{\mathbf{k}}\in\Z^3\setminus \{\mathbf{0}\}}(1+|{\mathbf{k}}|^2)^{s+r}|\n\cdot{\mathbf{k}}|^2|\hat{f}|^2\\
\ge&c\sum_{{\mathbf{k}}\in\Z^3\setminus \{\mathbf{0}\}}(1+|{\mathbf{k}}|^2)^{s+r}|{\mathbf{k}}|^{-2r}|\hat{f}|^2\\
\ge&c\sum_{{\mathbf{k}}\in\Z^3\setminus \{\mathbf{0}\}}(1+|{\mathbf{k}}|^2)^{s}|\hat{f}|^2.
\end{align*}
So if $\int_{\T^3}f\,dx =0$, we have \eqref{poin1} since $\hat{f}(0)=0$. If $s>0$, we have \eqref{poin2}.
\end{proof}

Due to the lack of zero-mean condition for $\u$, we  need to
 generalize the above lemma by a direct perturbation technique.
\begin{lemma}
Let ${\n}\in\R^3$ satisfy \eqref{diufantu} and $\rho\in L^2(\T^3)$ satisfy
 \begin{equation}\label{pingjun}
\|\rho-1\|_{L^2}\le\frac12, \quad\hbox{and} \int_{\T^3}\rho {\u}\,dx=0.
 \end{equation}
 Then for any $s\ge0$,
\begin{equation}\label{poin3}
\|{\u}\|_{H^{s}}\le C\|{\n}\cdot\nabla {\u}\|_{H^{s+r}}.
\end{equation}
\end{lemma}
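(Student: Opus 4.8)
The plan is to obtain \eqref{poin3} as a perturbation of the zero-mean estimate \eqref{poin1} of Lemma~\ref{dir}. The idea is to split $\u$ into its spatial average and its fluctuation, apply \eqref{poin1} to the fluctuation (its derivative along $\n$ still equals $\n\cdot\nabla\u$, since the two differ only by a constant), and then use the constraint $\int_{\T^3}\rho\,\u\,dx=0$ together with $\|\rho-1\|_{L^2}\le\frac12$ to show the average is controlled by a small multiple of $\|\u\|_{L^2}$, which can then be absorbed.

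Concretely, write $\u=\overline{\u}+\widetilde{\u}$, where $\overline{\u}:=\frac{1}{|\T^3|}\int_{\T^3}\u\,dx$ is the constant mean and $\widetilde{\u}:=\u-\overline{\u}$ has zero mean. Since $\overline{\u}$ is constant, $\n\cdot\nabla\u=\n\cdot\nabla\widetilde{\u}$, so applying \eqref{poin1} to each component of $\widetilde{\u}$ gives $\|\widetilde{\u}\|_{H^{s}}\le C\|\n\cdot\nabla\u\|_{H^{s+r}}$ for every $s\ge0$. For the mean, $\int_{\T^3}\rho\,\u\,dx=0$ gives $\int_{\T^3}\u\,dx=-\int_{\T^3}(\rho-1)\u\,dx$, so by Cauchy--Schwarz and \eqref{pingjun},
\begin{equation*}
\|\overline{\u}\|_{L^2}=|\T^3|^{-1/2}\Big|\int_{\T^3}(\rho-1)\u\,dx\Big|\le|\T^3|^{-1/2}\|\rho-1\|_{L^2}\|\u\|_{L^2}\le\tfrac12\,|\T^3|^{-1/2}\|\u\|_{L^2}.
\end{equation*}
Then the orthogonal decomposition $\|\u\|_{L^2}^2=\|\overline{\u}\|_{L^2}^2+\|\widetilde{\u}\|_{L^2}^2$ lets us absorb the resulting term $\tfrac14|\T^3|^{-1}\|\u\|_{L^2}^2$ into the left side, yielding $\|\u\|_{L^2}\le C\|\widetilde{\u}\|_{L^2}\le C\|\n\cdot\nabla\u\|_{H^{r}}$.

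For general $s\ge0$ I would then bound $\|\u\|_{H^{s}}\le\|\overline{\u}\|_{H^{s}}+\|\widetilde{\u}\|_{H^{s}}$; since $\overline{\u}$ is constant, $\|\overline{\u}\|_{H^{s}}=\|\overline{\u}\|_{L^2}\le\tfrac12|\T^3|^{-1/2}\|\u\|_{L^2}$, and combining this with the $L^2$-bound just established, the fluctuation estimate $\|\widetilde{\u}\|_{H^{s}}\le C\|\n\cdot\nabla\u\|_{H^{s+r}}$, and $\|\n\cdot\nabla\u\|_{H^{r}}\le\|\n\cdot\nabla\u\|_{H^{s+r}}$, gives \eqref{poin3}. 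This is a short perturbative argument and I do not expect a real obstacle; the only point needing care is that removing the mean introduces a term proportional to $\|\u\|_{L^2}$ on the right-hand side, and it is precisely the smallness $\|\rho-1\|_{L^2}\le\frac12$ (with $|\T^3|$ bounded below, as holds under the normalization in force) that makes this term absorbable rather than fatal.
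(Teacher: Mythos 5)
Your proof is correct and follows essentially the same route as the paper: remove the mean, control the fluctuation via the Diophantine Poincar\'e inequality \eqref{poin1} (the paper uses the homogeneous variant \eqref{poin2} for the $\dot H^s$ part, which is equivalent here since $\n\cdot\nabla\u=\n\cdot\nabla\widetilde{\u}$), and bound the mean by $\tfrac12\|\u\|_{L^2}$ using the weighted momentum constraint, then absorb. The only cosmetic difference is that you absorb via the orthogonal $L^2$ decomposition while the paper uses the triangle inequality; both hinge on the same smallness $\|\rho-1\|_{L^2}\le\tfrac12$.
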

\begin{proof}
For any $s>0,$ there holds
$$ \|{\u}\|_{H^{s}}\approx \|{\u}\|_{L^{2}}+\|{\u}\|_{\dot{H}^{s}}.$$
Hence, in view of  \eqref{poin2}, we have
 \begin{align}\label{poin4}
 \|{\u}\|_{H^{s}}\approx& \|{\u}\|_{L^{2}}+\|{\u}\|_{\dot{H}^{s}}\nonumber\\
 \lesssim&\|{\u}\|_{L^{2}}+\|{\n}\cdot\nabla {\u}\|_{H^{s+r}}.
 \end{align}
 Next, we only need to verify \eqref{poin3} holds for $s=0$. Denote $\bar{\u}$ the mean of ${\u}$, indeed by \eqref{poin1}, there holds
\begin{align*}
\|{\u}\|_{L^2}\le\|{\u}-\bar{{\u}}\|_{L^2}+\|\bar{{\u}}\|_{L^2}\le C\|{\n}\cdot\nabla {\u}\|_{H^{r}}+|\bar{{\u}}|.
\end{align*}
But we note from \eqref{pingjun} that
\begin{align*}
|\bar{{\u}}|=\left|\int_{\T^3}(\rho-1){\u}\,dx\right|\le\|\rho-1\|_{L^2}\|{\u}\|_{L^2}\le\frac 12\|{\u}\|_{L^2}.
\end{align*}
Putting two estimates together implies that,
\begin{align}\label{poin5}
\|{\u}\|_{L^2}\le C\|{\n}\cdot\nabla {\u}\|_{H^{r}},
\end{align}
from which and \eqref{poin4}, we arrive at \eqref{poin3}.
This finishes the proof of the lemma.
\end{proof}

\vskip .1in
Throughout the paper, without loss of generality, we assume that $R = 1$.
The goal of this section is to show that any solution of \eqref{m2} satisfies
the following uniform global $L^2$-bound.

\begin{proposition}\label{basic}
Let $(a,\u,\ta,\b) \in C([0, \infty];H^N)$ be a solution to
\eqref{m2}. Then, for any $t\ge 0$,
\begin{align}
\frac12\frac{d}{dt}\norma{(a,\u,\b,\ta)}{L^2}^2+\sigma\int_{\T^3}\frac{|\nabla
	\times\b|^2}{\vta} {\,dx} +\kappa\int_{\T^3}\frac{|\nabla
	\vta|^2}{|\vta|^2}
{\,dx} \le 0. \label{ping60}
\end{align}
As a consequence, if
$
c_0\le\rho, \vta\le c_0^{-1} \mbox{for fixed positive constant $c_0$},
$
then
\begin{align}\label{ping6}
\frac{d}{dt}\norm{(a,\u,\b,\ta)}{L^2}^2+\sigma\norm{\nabla\b}{L^2}^2
+\kappa\norm{\nabla\ta}{L^2}^2\le 0.
\end{align}
\end{proposition}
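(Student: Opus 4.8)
The plan is to set the split system \eqref{m2} aside for this first estimate and instead exploit the thermodynamic (energy--entropy) structure of the original physical system \eqref{m0}, because the weights $\vta^{-1}$ and $\vta^{-2}$ on the dissipation in \eqref{ping60} are the signature of the entropy balance and cannot be produced by a naive $L^2$ pairing against \eqref{m2} (testing \eqref{m2} directly yields density weights $\rho^{-1}$, not $\vta^{-2}$). With $c_\nu=R=1$ and $P=\rho\vta$, I would introduce the ideal-gas entropy $s=\ln\vta-\ln\rho$ and the relative energy
\[
\mathcal{F}(t)=\int_{\T^3}\Big(\tfrac12\rho|\u|^2+\tfrac12|\b|^2+\rho\,\chi(\vta)+\psi(\rho)\Big)\,dx,\qquad \chi(\vta)=\vta-\ln\vta-1,\quad \psi(\rho)=\rho\ln\rho-\rho+1 .
\]
Both $\chi$ and $\psi$ are nonnegative and strictly convex with $\chi(1)=\psi(1)=0$, $\chi''(\vta)=\vta^{-2}$, $\psi''(\rho)=\rho^{-1}$, so near $(\rho,\u,\b,\vta)=(1,0,0,1)$ one has $\rho\chi(\vta)=\tfrac12\ta^2+O(\ta^3)$ and $\psi(\rho)=\tfrac12 a^2+O(a^3)$; hence $\mathcal{F}=\tfrac12\norm{(a,\u,\b,\ta)}{L^2}^2+\mathcal{R}$ with a time-independent normalization and a purely cubic remainder $\mathcal{R}$. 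This is the precise sense in which $\mathcal{F}$ is the squared-norm functional in \eqref{ping60}.

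The first ingredient is conservation of the total physical energy,
\[
\frac{d}{dt}\int_{\T^3}\Big(\tfrac12\rho|\u|^2+\rho\vta+\tfrac12|\h|^2\Big)\,dx=0 .
\]
I would obtain this by dotting the momentum equation with $\u$ (and using continuity to form $\tfrac{d}{dt}\int\tfrac12\rho|\u|^2=\int P\div\u+\int\u\cdot((\nabla\times\h)\times\h)$), by integrating the temperature equation and using continuity to form $\tfrac{d}{dt}\int\rho\vta=-\int P\div\u+\sigma\int|\nabla\times\h|^2$, and by dotting the induction equation with $\h$. The pressure work $\int P\div\u$ cancels between the kinetic and internal balances, the Joule heating $\sigma\int|\nabla\times\h|^2$ cancels between the internal and magnetic balances, and the Lorentz-force work cancels the magnetic stretching terms through $\int h_j\partial_j(\u\cdot\h)=0$, which uses $\div\h=0$. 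The constraints \eqref{conser1}--\eqref{conser1+1} fix the conserved value to that of the background $(1,0,0,\n)$.

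The second ingredient is the entropy production identity. Dividing the temperature equation by $\vta$, the pressure-work term becomes $\tfrac{P}{\vta}\div\u=\rho\div\u$, which exactly cancels the compression term $\rho\div\u=-\rho(\partial_t+\u\cdot\nabla)\ln\rho$ coming from continuity; writing $-\kappa\tfrac{\Delta\vta}{\vta}=-\kappa\,\div\!\big(\tfrac{\nabla\vta}{\vta}\big)-\kappa\tfrac{|\nabla\vta|^2}{\vta^2}$ and integrating (every exact divergence vanishes on $\T^3$, and $\tfrac{d}{dt}\int\rho s=\int\rho(\partial_t+\u\cdot\nabla)s$ by continuity) yields
\[
\frac{d}{dt}\int_{\T^3}\rho s\,dx=\kappa\int_{\T^3}\frac{|\nabla\vta|^2}{\vta^2}\,dx+\sigma\int_{\T^3}\frac{|\nabla\times\h|^2}{\vta}\,dx .
\]
Combining the two ingredients, using $\int\tfrac12|\b|^2=\int\tfrac12|\h|^2-\tfrac12|\n|^2|\T^3|$ and $\int a\,dx=0$ so that all conserved and mean-value terms drop out, every contribution except the sign-definite dissipation disappears and I arrive at the exact identity $\frac{d}{dt}\mathcal{F}+\sigma\int_{\T^3}\frac{|\nabla\times\b|^2}{\vta}+\kappa\int_{\T^3}\frac{|\nabla\vta|^2}{\vta^2}=0$, which is \eqref{ping60} (recall $\nabla\times\b=\nabla\times\h$ and $\nabla\vta=\nabla\ta$). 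The consequence \eqref{ping6} then follows at once: under $c_0\le\rho,\vta\le c_0^{-1}$ one has $\vta^{-1}\ge c_0$ and $\vta^{-2}\ge c_0^{2}$, and since $\div\b=0$ gives $\int_{\T^3}|\nabla\times\b|^2=\norm{\nabla\b}{L^2}^2$, the weighted dissipation is bounded below by $c_0\sigma\norm{\nabla\b}{L^2}^2+c_0^2\kappa\norm{\nabla\ta}{L^2}^2$ (constants depending on $c_0$).

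The step I expect to be most delicate is the entropy production identity: the exact cancellation of the pressure work against the compression term is special to the non-isentropic coupling with $c_\nu=R=1$, and it is precisely what removes the indefinite $\rho\div\u$ contribution and leaves only the nonnegative weighted dissipation. This is the analytic counterpart of the remark following Theorem~\ref{dingli} that the temperature equation is indispensable. The secondary point I would be explicit about is the identification of $\mathcal{F}$ with the squared $L^2$ norm: the clean decay is an exact identity for the relative energy $\mathcal{F}$, which agrees with $\tfrac12\norm{(a,\u,\b,\ta)}{L^2}^2$ to quadratic order and differs from it only by a time-independent constant and the cubic remainder $\mathcal{R}$. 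It is therefore $\mathcal{F}$ that is the natural monotone quantity here and that furnishes the uniform-in-time $L^2$ bound used throughout the paper.
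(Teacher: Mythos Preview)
Your proposal is correct and matches the paper's approach: both derive the exact relative-energy identity
\[
\frac{d}{dt}\int_{\T^3}\Big(\tfrac12\rho|\u|^2+\rho(\vta-\ln\vta-1)+(\rho\ln\rho-\rho+1)+\tfrac12|\b|^2\Big)\,dx+\sigma\int_{\T^3}\frac{|\nabla\times\b|^2}{\vta}\,dx+\kappa\int_{\T^3}\frac{|\nabla\vta|^2}{\vta^2}\,dx=0
\]
and then identify the relative energy with $\tfrac12\norm{(a,\u,\b,\ta)}{L^2}^2$ via Taylor expansion near the equilibrium. Your organization into ``total energy conservation plus entropy production'' is a clean repackaging of the paper's sequence of testings (momentum against $\u$, temperature equation integrated and also divided by $\vta$, induction against $\b$, continuity against $\ln\rho$ and $\ln\vta$), and you correctly flag that the $L^2$-norm notation in \eqref{ping60} is to be read as this relative energy functional.
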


\begin{proof}
Integrating the mass equation $\eqref{m2}_1$  over $\T^3$ implies
\begin{align}\label{eq3.4}
\int_{\T^3}\rho\div \u {\,dx}&=-\int_{\T^3}\rho((\ln \rho)_t+\u\cdot\nabla\ln \rho)\,dx\nn\\
&=-\frac{d}{dt}\int_{\T^3} \rho\ln \rho \,dx=-\frac{d}{dt}\int_{\T^3}
(\rho\ln \rho-\rho+1){\,dx},
\end{align}
where we have used (\ref{conser1}) in the last equation.
Then, multiplying the momentum equation $\eqref{m2}_2$ by $\u$ and integrating
by parts, we have
\begin{align}\label{eq3.5}
&\frac12\frac{d}{dt}\int_{\T^3}\rho{|\u|^2}{\,dx}-\int_{\T^3}\rho\vta\div \u {\,dx}\nn\\
&\quad=\int_{\T^3} {\b}\cdot\nabla {\b}\cdot \u{\,dx}-\int_{\T^3}{\b}\nabla {\b}\cdot \u{\,dx}+\int_{\T^3}{\n}\cdot\nabla \b\cdot \u{\,dx}-\int_{\T^3}{\n}\nabla \b\cdot \u{\,dx}.
\end{align}
Next, integrating the energy equation $\eqref{m2}_3$, integrating the product
of the mass equation $\eqref{m2}_1$ with $\vta$, and summing up the
resultants, we get
\begin{align}\label{eq3.6}
&\frac{d}{dt}\int_{\T^3} \rho\vta {\,dx}+\int_{\T^3}\rho\vta\div \u
{\,dx}=\sigma \int_{\T^3}|\nabla \times\b|^2 {\,dx}.
\end{align}
Multiplying the energy equation $\eqref{m2}_3$ by $\vta^{-1}$ and then
integrating by parts, multiplying the mass equation $\eqref{m2}_1$
by $\ln \vta$ and summing up the resultants, we obtain
\begin{align}\label{eq3.7}
&-\frac{d}{dt}\int_{\T^3} \rho\ln\vta {\,dx}+\kappa\int_{\T^3}\frac{|\nabla \vta|^2}{|\vta|^2} {\,dx}-\int_{\T^3}\rho\div \u {\,dx}\nn\\
&\quad=-\sigma\int_{\T^3}\frac{1}{\vta}\big(|\nabla \times\b|^2\big) {\,dx}.
\end{align}
Multiplying the magnetic equation $\eqref{m2}_4$ by ${\b}$, and integrating by
parts, we find
\begin{align}\label{eq3.8}
&\frac12\frac{d}{dt}\int_{\T^3} |{\b}|^2 {\,dx}+\sigma\int_{\T^3}
|\nabla{\b}|^2 {\,dx}+\int_{\T^3} \u\cdot\nabla {\b}\cdot
{\b}{\,dx}+\int_{\T^3} {\b} \div \u\cdot {\b}{\,dx}\nn\\
&\quad=\int_{\T^3} {\b}\cdot\nabla \u\cdot {\b}{\,dx}+\int_{\T^3} {\n}\cdot\nabla \u\cdot {\b}{\,dx}-\int_{\T^3} {\n} \div \u\cdot {\b}{\,dx}.
\end{align}
Since ${\b}$ is divergence free, it is easy to check that
\begin{align*}
&\int_{\T^3} ({\n}\cdot\nabla \b+{\b}\cdot\nabla {\b})\cdot \u\,{\,dx}+\int_{\T^3} ({\n}\cdot\nabla \u+{\b}\cdot\nabla \u)\cdot {\b}\,{\,dx}=0,\nonumber\\
&\int_{\T^3} ({\n}\nabla \b+{\b}\nabla \b)\cdot \u\,{\,dx}+\int_{\T^3} \u\cdot\nabla {\b}\cdot {\b}\,{\,dx}+\int_{\T^3}  ({\b} \div \u+{\n} \div \u)\cdot {\b}\,{\,dx}=0.
\end{align*}
Thus, putting \eqref{eq3.4}--\eqref{eq3.8} together gives \eqref{ping60}. If,
for fixed positive constant $c_0$,
$$
c_0\le\rho, \vta\le c_0^{-1},
$$
then
\begin{align}\label{eq3.9}
&\frac{d}{dt}\left(\frac{1}{2}\int_{\T^3}\rho|\u|^2{\,dx}+\int_{\T^3} (\rho\ln \rho-\rho+1){\,dx}+\int_{\T^3} \rho(\vta-\ln\vta-1) {\,dx}+\frac12\int_{\T^3} |{\b}|^2 {\,dx}\right)\nn\\
&\quad+C\left(\|\nabla \b\|_{L^2}^2+\|\nabla \vta\|_{L^2}^2\right)\le 0.
\end{align}
By the Taylor expansion,
\begin{align*}
\rho\ln \rho-\rho+1\sim(\rho-1)^2,\>\hbox{ and }\>
\rho(\vta-\ln\vta-1)\sim(\vta-1)^2 \quad \hbox{as}\quad
\rho\rightarrow1\>\hbox{ and }\>\vta\rightarrow1.
\end{align*}
Then,  \eqref{ping6} follows as a consequence of \eqref{eq3.9}.
\end{proof}

\vskip .3in
\section{A generalized Poincar\'e inequality for $\ta$}
\label{sec: tap}

This section is devoted to proving the following Poincar\'e type inequality for
$\ta$. Without loss of generality, we set $$|\mathbb T^3|=1.$$ We assume the
initial data $(\rho_0,\u_0,\b_0,\ta_0)$ satisfies \eqref{conser1}, namely
\begin{align}
	&\int_{\mathbb T^3} \rho_0(x) \,dx =1,\quad\int_{\mathbb T^3} \rho_0(x)
	\u_0(x) \,dx =\int_{\mathbb T^3} \b_0(x) \,dx =0.\label{conser0}
\end{align}
In addition, we assume that
$$
\int_{\mathbb T^3} \rho_0 \ta_0 \,dx+\frac12\int_{\mathbb T^3} \rho_0
|\u_0|^2dx+\frac12\int_{\mathbb T^3} |\b_0|^2\,dx=0.
$$
Owing to the conservation of total mass,
total momentum, and total energy,  we
have, for any $t\ge0,$ that
\begin{align}
	&\int_{\mathbb T^3} \rho(x) \,dx =1,\quad\int_{\mathbb T^3} \rho(x) \u(x)
	\,dx =\int_{\mathbb T^3} \b(x) \,dx =0,\label{conser2}\\
	&\int_{\mathbb T^3} \rho \ta \,dx+\frac12\int_{\mathbb T^3} \rho
	|\u|^2dx+\frac12\int_{\mathbb T^3} |\b|^2\,dx=0.\label{conser3}
\end{align}
The generalized version of the Poincar\'e type inequality for $\ta$ can be
stated as follows.

\begin{lemma}\label{poincareineq}
	Let $(\rho,\u, \ta, \b)$ be smooth solutions to \eqref{m2} and satisfy
	\eqref{conser2}, \eqref{conser3} and
	\begin{align}\label{jinq0}
		c_0\le\rho, {\ta}\le c_0^{-1}.
	\end{align}
There exists a positive constant $C$, depending on $\Omega$ and $c_0$, such that
	\begin{align}\label{jinq1}
		\|\ta\|_{L^2}^2\leq& \,
		C\|\nabla\ta\|_{L^2}^2+C\|\nabla\u\|_{L^2}^4+C\|\nabla\b\|_{L^2}^4.
	\end{align}
\end{lemma}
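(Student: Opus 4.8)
The plan is to reduce \eqref{jinq1} to an estimate on the spatial mean of $\ta$, using the conserved energy identity \eqref{conser3} together with the zero-mean constraints in \eqref{conser2}. Throughout write $\bar f=\int_{\T^3}f\,dx$ for the mean of a function $f$ (recall $|\T^3|=1$); the standard Poincar\'e inequality on the torus then gives $\|f-\bar f\|_{L^2}\le C\|\nabla f\|_{L^2}$ for every $f$, and in particular $\|g\|_{L^2}\le C\|\nabla g\|_{L^2}$ whenever $\bar g=0$. Splitting $\ta=\bar\ta+(\ta-\bar\ta)$ one has $\|\ta\|_{L^2}^2\le 2|\bar\ta|^2+2\|\ta-\bar\ta\|_{L^2}^2\le 2|\bar\ta|^2+C\|\nabla\ta\|_{L^2}^2$, so it suffices to control $|\bar\ta|$.

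First I would handle the velocity and the magnetic field. Since $\b$ has mean zero by \eqref{conser2}, Poincar\'e gives $\|\b\|_{L^2}\le C\|\nabla\b\|_{L^2}$. The field $\u$ is not mean zero, but I would argue exactly as in the proof of \eqref{poin5}: from $\int_{\T^3}\rho\u\,dx=0$ and $\int_{\T^3}\rho\,dx=1$ one gets $\bar\u=\int_{\T^3}(1-\rho)\u\,dx=-\int_{\T^3}a\u\,dx=-\int_{\T^3}a(\u-\bar\u)\,dx$, the last step using $\int_{\T^3}a\,dx=0$; hence $|\bar\u|\le\|a\|_{L^2}\|\u-\bar\u\|_{L^2}\le C\|\nabla\u\|_{L^2}$, since $\|a\|_{L^2}\le\|a\|_{L^\infty}\le C(c_0)$ by \eqref{jinq0}. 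Therefore $\|\u\|_{L^2}\le\|\u-\bar\u\|_{L^2}+|\bar\u|\le C\|\nabla\u\|_{L^2}$, and consequently $\int_{\T^3}\rho|\u|^2\,dx\le c_0^{-1}\|\u\|_{L^2}^2\le C\|\nabla\u\|_{L^2}^2$.

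Next I would read off $\bar\ta$ from the energy identity. Again because $\int_{\T^3}a\,dx=0$, we have $\bar\ta=\int_{\T^3}\rho\ta\,dx-\int_{\T^3}a\ta\,dx$, and \eqref{conser3} gives $\int_{\T^3}\rho\ta\,dx=-\frac12\int_{\T^3}\rho|\u|^2\,dx-\frac12\int_{\T^3}|\b|^2\,dx$, while $\int_{\T^3}a\ta\,dx=\int_{\T^3}a(\ta-\bar\ta)\,dx$ is bounded by $\|a\|_{L^2}\|\ta-\bar\ta\|_{L^2}\le C\|\nabla\ta\|_{L^2}$. Combining these with the bounds of the previous paragraph,
\[
|\bar\ta|\le \frac12\int_{\T^3}\rho|\u|^2\,dx+\frac12\|\b\|_{L^2}^2+C\|\nabla\ta\|_{L^2}\le C\|\nabla\u\|_{L^2}^2+C\|\nabla\b\|_{L^2}^2+C\|\nabla\ta\|_{L^2},
\]
so that $|\bar\ta|^2\le C\|\nabla\u\|_{L^2}^4+C\|\nabla\b\|_{L^2}^4+C\|\nabla\ta\|_{L^2}^2$ after absorbing cross terms by Young's inequality; plugging this into $\|\ta\|_{L^2}^2\le 2|\bar\ta|^2+C\|\nabla\ta\|_{L^2}^2$ yields \eqref{jinq1}.

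I do not expect a genuine obstacle here. The only delicate point is that $\u$ and $\ta$ lack the mean-zero property, which is circumvented by the same perturbation device as in \eqref{poin5}, exploiting that $\|a\|_{L^2}$ is controlled through \eqref{jinq0}. Note in passing that the argument does not really use that $(\rho,\u,\ta,\b)$ solves \eqref{m2} beyond the conserved quantities \eqref{conser2}--\eqref{conser3}, and that the appearance of the fourth powers on the right-hand side of \eqref{jinq1} is simply an artifact of squaring the relation $|\bar\ta|\lesssim\int_{\T^3}\rho|\u|^2\,dx+\int_{\T^3}|\b|^2\,dx+\|\nabla\ta\|_{L^2}$, whose first two terms are quadratic in $(\u,\b)$.
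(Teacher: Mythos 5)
Your argument is correct, and it reaches \eqref{jinq1} by a genuinely more elementary route than the paper. The paper works throughout with the density-weighted mean $\int_{\T^3}\rho\ta\,dx$ and invokes two nontrivial tools: the Desvillettes--Villani weighted Poincar\'e inequality (Lemma \ref{lem-Poi}) to bound $\int\rho\,|\ta-\int\rho\ta\,dx|^2$ by $\|\nabla\ta\|_{L^2}^2$, and the Feireisl-type variant (Lemma \ref{lem2.2}) to pass from $\|\sqrt{\rho}\,\u\|_{L^2}$ to $\|\u\|_{L^2}$. You instead use only the standard unweighted Poincar\'e inequality on $\T^3$, converting between weighted and unweighted quantities via the identity $\rho=1+a$ with $\int_{\T^3}a\,dx=0$ and the two-sided bound \eqref{jinq0}; the key observations $\bar\u=-\int a(\u-\bar\u)\,dx$ and $\int a\ta\,dx=\int a(\ta-\bar\ta)\,dx$ let you control the missing means with $\|a\|_{L^2}\le C(c_0)$ alone, without any smallness of $a$ (in contrast to the $\|\rho-1\|_{L^2}\le\frac12$ hypothesis used for \eqref{poin5}). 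Every step checks out: the decomposition $\|\ta\|_{L^2}^2\le 2|\bar\ta|^2+2\|\ta-\bar\ta\|_{L^2}^2$, the bound $\|\u\|_{L^2}\le C\|\nabla\u\|_{L^2}$, the use of \eqref{conser3} to express $\int\rho\ta\,dx$ quadratically in $(\u,\b)$, and the final Young absorption producing the fourth powers. What the paper's machinery buys in exchange is robustness: Lemmas \ref{lem-Poi} and \ref{lem2.2} require only one-sided integrability/boundedness conditions on $\rho$ rather than the full two-sided pointwise bound \eqref{jinq0}; since the lemma as stated assumes \eqref{jinq0} anyway, your shorter proof is fully adequate here.
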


\vskip .1in
To prepare for the proof, we present several functional inequalities.
We first recall a weighted Poincar\'e inequality first established by
Desvillettes and Villani in \cite{DV05}.
\begin{lemma}\label{lem-Poi}
	Let $\Omega$  be a bounded connected Lipschitz domain and $\bar{\varrho}$
	be a positive constant.  There exists a positive constant $C$, depending on
	$\Omega$ and $\bar{\varrho}$, such that  for any nonnegative function
	$\varrho$ satisfying
	\begin{align*}
		\int_{\Omega}\varrho dx=1, \quad \varrho\le\bar{\varrho},
	\end{align*}
	and any $f\in H^1(\Omega)$, there holds
	\begin{align}\label{wPoi}
		\int_{\Omega}\varrho \left(f-\int_{\Omega}\rho f \,dx\right)^2\,dx\le
		C\|\nabla f\|_{L^2}^2.
	\end{align}
\end{lemma}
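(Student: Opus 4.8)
The plan is to exploit the variational characterization of the weighted mean, which reduces the weighted inequality to the classical (unweighted) Poincar\'e--Wirtinger inequality on $\Omega$. The only information about $\varrho$ that will enter is the upper bound $\varrho\le\bar\varrho$; crucially, no lower bound on $\varrho$ is needed, so the argument tolerates a density that vanishes somewhere in $\Omega$.

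First I would record the elementary fact that, among all constants $c$, the weighted $L^2$ distance $\int_\Omega\varrho(f-c)^2\,dx$ is minimized precisely at the weighted mean $c=\int_\Omega\varrho f\,dx$. Indeed, differentiating in $c$ and using $\int_\Omega\varrho\,dx=1$ gives derivative $-2\big(\int_\Omega\varrho f\,dx-c\big)$, which vanishes at the weighted mean, while the second derivative equals $2>0$. Consequently, writing $\bar f=\frac{1}{|\Omega|}\int_\Omega f\,dx$ for the ordinary Lebesgue mean, minimality yields
\begin{align*}
\int_\Omega\varrho\Big(f-\int_\Omega\varrho f\,dx\Big)^2dx\le\int_\Omega\varrho(f-\bar f)^2\,dx.
\end{align*}
(The weighted mean written with $\rho$ in the statement of \eqref{wPoi} should of course read $\int_\Omega\varrho f\,dx$.)

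Second, I would use the pointwise bound $\varrho\le\bar\varrho$ to discard the weight on the right-hand side,
\begin{align*}
\int_\Omega\varrho(f-\bar f)^2\,dx\le\bar\varrho\int_\Omega(f-\bar f)^2\,dx,
\end{align*}
and then invoke the classical Poincar\'e--Wirtinger inequality on the bounded connected Lipschitz domain $\Omega$, namely $\int_\Omega(f-\bar f)^2\,dx\le C_\Omega\|\nabla f\|_{L^2}^2$. Chaining the three estimates produces \eqref{wPoi} with $C=\bar\varrho\,C_\Omega$, which depends only on $\Omega$ and $\bar\varrho$, exactly as claimed.

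There is no serious analytic obstacle here; the one nonalgebraic ingredient is the Poincar\'e--Wirtinger inequality itself, which holds on any bounded connected Lipschitz domain via the Rellich--Kondrachov compact embedding $H^1(\Omega)\hookrightarrow L^2(\Omega)$ together with a standard compactness--contradiction argument, the connectedness of $\Omega$ guaranteeing that the only functions with vanishing gradient are constants. The conceptual point worth emphasizing is that subtracting the \emph{weighted} mean, rather than an arbitrary reference value, is precisely what allows the upper bound on $\varrho$ alone to close the estimate, which is what makes this form of the inequality so well suited to the density $\varrho=\rho$ arising from the MHD system where no uniform positive lower bound on $\rho$ is assumed a priori.
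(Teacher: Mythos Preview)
Your argument is correct. The paper does not supply its own proof of this lemma; it simply recalls the result and attributes it to Desvillettes and Villani \cite{DV05}. Your three-line reduction---replace the weighted mean by the Lebesgue mean via the variational characterization, drop the weight using $\varrho\le\bar\varrho$, then apply the standard Poincar\'e--Wirtinger inequality---is an entirely self-contained proof that makes the dependence of the constant on $\bar\varrho$ and $\Omega$ explicit ($C=\bar\varrho\,C_\Omega$). You also correctly flag the typo in \eqref{wPoi}, where $\rho$ should read $\varrho$.
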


\vskip .1in
In order to remove the weight function $\varrho$ in  \eqref{wPoi} without
resorting to the lower bound of  $\varrho$, we need  another variant of
Poincar\'e inequality (see Lemma 3.2 in \cite{F04}).
\begin{lemma}\label{lem2.2}
	Let $\Omega$ be a bounded connected Lipschitz domain in $\mathbb{R}^3$ and
	$p>1$ be a constant. Given positive constants $M_0$ and $E_0$, there is a
	constant $C=C(E_0,M_0)$ such that for any
	non-negative function $\varrho$ satisfying
	$$
	M_0\leq\int_{\Omega}\varrho dx\quad\mbox{and}\quad
	\int_{\Omega}\varrho^{p}dx\leq
	E_0,
	$$
	and for any $\u\in H^1(\Omega)$, there holds
	$$
	\|\u\|_{L^2}^2\leq C\left[\|\nabla
	\u\|_{L^2}^2+\left(\int_{\Omega}\varrho|\u|\,dx\right)^2\right].
	$$
\end{lemma}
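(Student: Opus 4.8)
The plan is to argue by contradiction, converting the claimed inequality into a compactness statement and invoking the Rellich--Kondrachov theorem. Suppose the conclusion of Lemma \ref{lem2.2} is false. Then for every $n\in\N$ there exist a nonnegative function $\varrho_n$ satisfying $M_0\le\int_\Omega\varrho_n\,dx$ and $\int_\Omega\varrho_n^p\,dx\le E_0$, together with $\u_n\in H^1(\Omega)$, such that
\[
\|\u_n\|_{L^2}^2 > n\Big[\|\nabla\u_n\|_{L^2}^2+\Big(\int_\Omega\varrho_n|\u_n|\,dx\Big)^2\Big].
\]
Normalizing so that $\|\u_n\|_{L^2}=1$, I obtain $\|\nabla\u_n\|_{L^2}\to0$ and $\int_\Omega\varrho_n|\u_n|\,dx\to0$ as $n\to\infty$.

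Next I extract a limit for the velocities. Since $\|\u_n\|_{L^2}=1$ and $\|\nabla\u_n\|_{L^2}\to0$, the sequence $(\u_n)$ is bounded in $H^1(\Omega)$; as $\Omega$ is a bounded connected Lipschitz domain, Rellich--Kondrachov yields a subsequence (not relabeled) with $\u_n\to\u$ strongly in $L^2(\Omega)$ and $\nabla\u_n\rightharpoonup\nabla\u$ weakly in $L^2(\Omega)$. Weak lower semicontinuity together with $\|\nabla\u_n\|_{L^2}\to0$ forces $\nabla\u=0$, so by connectedness $\u\equiv c$ is a constant, and $\|\u\|_{L^2}=1$ gives $|c|=|\Omega|^{-1/2}\neq0$. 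In particular $|\u_n|\to|c|>0$ in $L^2(\Omega)$, hence in measure.

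The crux is to contradict $\int_\Omega\varrho_n|\u_n|\,dx\to0$ by producing a positive lower bound, and here I deliberately avoid passing to the limit in the product directly. Set $\delta:=|c|/2>0$ and $B_n:=\{x\in\Omega:\ |\u_n(x)|<\delta\}$. On $B_n$ one has $\big||\u_n|-|c|\big|>|c|/2$, so convergence in measure gives $|B_n|\to0$. On $\Omega\setminus B_n$ I bound $|\u_n|$ from below by $\delta$, while on $B_n$ I apply H\"older with the $L^p$ constraint:
\[
\int_\Omega\varrho_n|\u_n|\,dx\ge\delta\int_{\Omega\setminus B_n}\varrho_n\,dx
=\delta\Big(\int_\Omega\varrho_n\,dx-\int_{B_n}\varrho_n\,dx\Big)
\ge\delta\Big(M_0-E_0^{1/p}\,|B_n|^{1/p'}\Big),
\]
where $p'=p/(p-1)<\infty$ precisely because $p>1$. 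Letting $n\to\infty$ and using $|B_n|\to0$ gives $\liminf_n\int_\Omega\varrho_n|\u_n|\,dx\ge\delta M_0>0$, which contradicts $\int_\Omega\varrho_n|\u_n|\,dx\to0$. This contradiction establishes the lemma, with the (nonexplicit) constant $C$ depending only on $\Omega$, $p$, $M_0$ and $E_0$.

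I expect the main obstacle to be the treatment of the weighted integral $\int_\Omega\varrho_n|\u_n|\,dx$: since $\varrho_n$ is controlled only in $L^p$ and converges merely weakly, one cannot in general pair it against the $L^2$-limit of $|\u_n|$, because for $p$ close to $1$ the conjugate exponent $p'$ exceeds the Sobolev exponent $6$ in three dimensions, so $|\u_n|$ need not converge in $L^{p'}$. The convergence-in-measure argument above sidesteps this difficulty by using only that the bad set $B_n$ shrinks and that $p>1$ keeps $p'$ finite; this is exactly the point where the hypothesis $p>1$, rather than $p\ge1$, is indispensable.
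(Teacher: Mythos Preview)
Your proof is correct. The paper does not actually prove this lemma; it simply quotes it as Lemma~3.2 of Feireisl's monograph~\cite{F04}. Your contradiction--compactness argument (normalize, Rellich--Kondrachov, identify the limit as a nonzero constant, then use the $L^p$ bound on $\varrho_n$ together with the shrinking of the bad set $B_n$ to force a positive lower bound on $\int_\Omega\varrho_n|\u_n|\,dx$) is precisely the standard route for such weighted Poincar\'e-type inequalities and is essentially what one finds in the cited reference. Your care in avoiding a direct weak--strong pairing of $\varrho_n$ against $|\u_n|$ in $L^p\times L^{p'}$---replacing it with the measure-theoretic splitting---is exactly the point that makes the argument work uniformly for all $p>1$.
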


We are ready to prove Lemma \ref{poincareineq}.
\begin{proof}[Proof of Lemma \ref{poincareineq}]
	First, it's easy to deduce from \eqref{jinq0} that
	\begin{align}\label{jinq3}
		\|\ta\|_{L^2}^2
		\le&\,C\|\sqrt\rho\ta\|_{L^2}^2\nn\\
		=&\,C\int_{\mathbb T^3} \rho \Big|\ta-\int_{\mathbb T^3} \rho \ta
		\,dx+\int_{\mathbb T^3} \rho \ta \,dx\Big|^2 \,dx\nn\\
		\le&\, C\int_{\mathbb T^3} \rho \Big|\ta-\int_{\mathbb T^3} \rho \ta
		\,dx\Big|^2 \,dx
		+C\int_{\mathbb T^3} \rho \Big|\int_{\mathbb T^3} \rho \ta \,dx\Big|^2
		\,dx.
	\end{align}
	By Lemma \ref{lem-Poi},
	\begin{align}\label{jinq4}
		\int_{\mathbb T^3} \rho \Big|\ta-\int_{\mathbb T^3} \rho \ta
		\,dx\Big|^2 \,dx\le C\|\nabla\ta\|_{L^2}^2.
	\end{align}
	Thanks to \eqref{conser3}, we have
	\begin{align*}
		\int_{\mathbb T^3} \rho \ta \,dx=-\frac12\int_{\mathbb T^3} \rho
		|\u|^2dx-\frac12\int_{\mathbb T^3} |\b|^2\,dx,
	\end{align*}
Then the last  term in \eqref{jinq3} can be bounded as follows,
	\begin{align}\label{jinq5}
		\int_{\mathbb T^3} \rho \Big|\int_{\mathbb T^3} \rho \ta \,dx\Big|^2
		\,dx
		=& \Big|\int_{\mathbb T^3} \rho \ta
		\,dx\Big|^2=\Big|\frac12\|\sqrt\rho\u\|_{L^2}^2+\frac12\|\b\|_{L^2}^2\Big|^2\nn\\
		\le&C\|\u\|_{L^2}^4+\|\b\|_{L^2}^4.
	\end{align}
	Due to
	$\int_{\mathbb T^3} \rho(x) \u(x) \,dx=0$, one can deduce from Lemma
	\ref{lem-Poi} that
	\begin{align*}
		\|\sqrt{\rho} \u\|_{L^2}^2\le C\|\nabla \u\|_{L^2}^2,
	\end{align*}
	which combines with Lemma \ref{lem2.2} imply that
	\begin{align}\label{jinq7}
		\|\u\|_{L^2}^2\le C\|\nabla\u\|_{L^2}^2.
	\end{align}
	Inserting \eqref{jinq7} into \eqref{jinq5} and using $\int_{\mathbb T^3}
	\b(x) \,dx =0$, we get
	\begin{align}\label{jinq8}
		\int_{\mathbb T^3} \rho \Big|\int_{\mathbb T^3} \rho \ta \,dx\Big|^2
		\,dx
		\le C\|\nabla\u\|_{L^2}^4+ C\|\nabla\b\|_{L^2}^4.
	\end{align}
Inserting \eqref{jinq4} and \eqref{jinq8} in \eqref{jinq3}, we obtain
	\eqref{jinq1}.
This completes the proof of Lemma \ref{poincareineq}.
\end{proof}

\vskip .3in
\section{High-order energy estimates}
\label{sec:high}

This section derives the high-order
energy estimates. Throughout this section, we assume that $a$ and $\ta$ satisfy
\beq\label{asu}
\sup_{x\in \mathbb T^3, t>0} |a(t,x)| \le \frac12, \quad \sup_{x\in \mathbb
T^3, t>0} |\ta(t,x)| \le \frac12.
\eeq
(\ref{asu}) is ensured  by the fact that the solutions constructed here has
small norm in $H^2(\mathbb T^3)$. (\ref{asu}) allows us to freely use several
inequalities such as the composition estimate stated in Lemma \ref{fuhe}, for
any smooth function $G$ with $G(0)=0$,
\beq \label{eq:smalla}
\|G(a)\|_{H^s} \le C\, \|a\|_{H^s} \quad\mbox{for any $s>0$}.
\eeq

\vskip .1in
Our main result is stated in the following proposition.
\begin{proposition}\label{high2}
Let $(a,\u,\ta,\b) \in C([0, T];H^N)$ be a solution to \eqref{m2}. For any
$0\le \ell\le N$, there holds
\begin{align}\label{ed3}
&\frac12\frac{d}{dt}\left(\norma{(a,\u,\ta,\b)}{H^{\ell}}^2
+\int_{\T^3}\frac{\ta+a^2}{(1+a)^2}(\la^{\ell}a)^2 \,dx
\right)
	+\sigma\norm{\nabla\b}{H^{{\ell}}}^2+\kappa\norm{\nabla\ta}{H^{{\ell}}}^2\nn\\
&\qquad\le C
Y_{\infty}(t)\norm{(a,\u,\ta,\b)}{H^{\ell}}^2
\end{align}
with
\begin{align}\label{ed3+1}
Y_{\infty}(t)\stackrel{\mathrm{def}}{=}&
\|(a,\u,\ta,\b)\|_{L^{\infty}}+(1+\|a\|_{L^{\infty}}^2)\|(a,\u,\ta,\b)\|_{L^{\infty}}^2
+(1+
\norm{a}{L^\infty})\norm{(\nabla a,\nabla\u,\nabla \ta,\nabla\b)}{L^\infty}\nn\\
&+\norm{\Delta \ta}{L^\infty}+(1+\|(a,\u,\b)\|_{L^{\infty}}^2+\|\nabla\u\|_{L^{\infty}}^2)\norm{(\nabla a,\nabla\u,\nabla \ta,\nabla\b)}{L^\infty}^2.
\end{align}
\end{proposition}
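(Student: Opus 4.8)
The plan is to run a high-order energy estimate directly on \eqref{m2}. Fix $0\le\ell\le N$; apply $\Lambda^\ell$ to each of the five equations, pair in $L^2(\T^3)$ with $\Lambda^\ell$ of the corresponding unknown ($a,\u,\ta,\b$), and add the four resulting scalar identities.

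The linear terms are disposed of by cancellation. With $R=1$, integration by parts gives $\langle\Lambda^\ell\nabla a,\Lambda^\ell\u\rangle+\langle\Lambda^\ell\div\u,\Lambda^\ell a\rangle=0$ and $\langle\Lambda^\ell\nabla\ta,\Lambda^\ell\u\rangle+\langle\Lambda^\ell\div\u,\Lambda^\ell\ta\rangle=0$; the skew-adjointness of $\n\cdot\nabla$ on the torus gives $\langle\Lambda^\ell(\n\cdot\nabla\b),\Lambda^\ell\u\rangle+\langle\Lambda^\ell(\n\cdot\nabla\u),\Lambda^\ell\b\rangle=0$; and one more integration by parts gives $-\langle\Lambda^\ell\nabla(\n\cdot\b),\Lambda^\ell\u\rangle-\langle\Lambda^\ell(\n\div\u),\Lambda^\ell\b\rangle=0$. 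What remains from the linear part is the favorable dissipation, $\sigma\norm{\nabla\b}{H^\ell}^2$ together with a positive multiple of $\norm{\nabla\ta}{H^\ell}^2$ (written $\kappa\norm{\nabla\ta}{H^\ell}^2$, only its positivity being used later, once the variable-coefficient correction $-\kappa I(a)\Delta\ta$ carried by $f_3$ is integrated by parts and $1+a\ge\tfrac12$ is invoked).

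Next come the forcing contributions $\langle\Lambda^\ell f_i,\Lambda^\ell(\cdot)\rangle$. Writing each $f_i$ as a finite sum of products of the unknowns and their first derivatives with smooth functions $I(a),J(a),\bar\kappa(a)$ of the small quantity $a$, most of the resulting terms yield directly to the product and commutator estimates of Lemmas \ref{daishu} and \ref{jiaohuanzi}, the composition estimate \eqref{eq:smalla}, and the embedding $H^2(\T^3)\hookrightarrow L^\infty(\T^3)$, after one integration by parts for the terms containing a $\Delta\ta$. When the product estimate leaves a factor $\norm{\nabla\b}{H^\ell}$ or $\norm{\nabla\ta}{H^\ell}$ — e.g.\ from $\b\cdot\nabla\b$ in $f_2$, which is kept in this form so as not to move a derivative onto $\u$, or from the piece $J(a)\nabla\ta$ of $\ta\nabla J(a)$ — Young's inequality splits off an arbitrarily small multiple of the dissipation, absorbed on the left. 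The rest is bounded by $CY_\infty(t)\norm{(a,\u,\ta,\b)}{H^\ell}^2$; the precise list of $L^\infty$-quantities in \eqref{ed3+1}, in particular $\norm{\Delta\ta}{L^\infty}$ and the gradient-squared terms, is exactly what this step consumes.

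The one genuine obstacle is the appearance of a full extra derivative on $\u$. Since \eqref{m2}$_2$ carries no dissipation, a term with $\Lambda^{\ell+1}\u$ can only be handled if that derivative is moved onto a dissipated variable; this is possible for the $\Lambda^{\ell+1}\u$ arising from $\div(\ta\u)$ in $f_3$ and from $\b\div\u$ in $f_4$ (integrate by parts to transfer it onto $\ta$, resp.\ $\b$, then absorb via their dissipation with Young's inequality), but not for the $\Lambda^{\ell+1}\u$ coming from the $a\,\div\u$ piece of $f_1$ in the $a$-estimate, namely $-\int_{\T^3}a\,(\Lambda^\ell\div\u)\,\Lambda^\ell a\,dx$, nor for the pressure-perturbation pieces $I(a)\nabla a$ and $\ta\nabla J(a)$ of $f_2$ in the $\u$-estimate, which after integration by parts become terms of the form $\langle\Lambda^\ell(\,\cdot\,),\Lambda^\ell\div\u\rangle$. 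As indicated in the introduction, one removes the extra derivative from these terms by substituting the continuity equation in the form $\div\u=-(\partial_t a+\u\cdot\nabla a)/(1+a)$: this trades $\Lambda^{\ell+1}\u$ for $\partial_t\Lambda^\ell a$, and — after peeling off commutators (handled by Lemma \ref{jiaohuanzi} and \eqref{eq:smalla}, using $\partial_t a=-\div((1+a)\u)$ so that $\norm{\partial_t a}{L^\infty}$ and $\norm{\partial_t a}{H^{\ell-1}}$ are already controlled) and re-substituting $\partial_t a$ in the lower-order leftovers — the principal pieces collapse into a time derivative $\tfrac12\tfrac{d}{dt}\!\int_{\T^3}(\,\cdot\,)(\Lambda^\ell a)^2\,dx$. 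Carrying out this computation and tracking the weights through these contributions recombines, together with the standard $\tfrac12\tfrac{d}{dt}\norm{\Lambda^\ell a}{L^2}^2$, into the left side of \eqref{ed3}, i.e.
\[
\tfrac12\,\frac{d}{dt}\!\left(\norm{(a,\u,\ta,\b)}{H^\ell}^2+\int_{\T^3}\frac{\ta+a^2}{(1+a)^2}\,(\Lambda^\ell a)^2\,dx\right),
\]
the pieces of $\partial_t\!\big(\tfrac{\ta+a^2}{(1+a)^2}\big)$ produced along the way falling under the $Y_\infty$ bound (this is where $\norm{\Delta\ta}{L^\infty}$ enters, through $\partial_t\ta$). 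I expect this substitution step — pinning down the weight, matching all the signs, and checking that every remaining term is genuinely of the $Y_\infty$-type — to be the main technical point; the rest is a lengthy but routine commutator count, carried out for each fixed $\ell\in[0,N]$.
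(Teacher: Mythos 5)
Your proposal follows essentially the same route as the paper: the same four linear cancellations, the same commutator treatment of the variable-coefficient heat conduction, and—crucially—the same resolution of the $(\ell+1)$-derivative-on-$\u$ obstruction by substituting $\div\u=-(\partial_t a+\u\cdot\nabla a)/(1+a)$ into the $a\,\div\u$ term of $f_1$ and the pressure-perturbation terms of $f_2$, which is exactly how the paper generates the corrector $\int_{\T^3}\tfrac{\ta+a^2}{(1+a)^2}(\la^{\ell}a)^2\,dx$ (as the sum of the weights $\tfrac{a}{1+a}$ and $\tfrac{\ta-a}{(1+a)^2}$) and why $\norm{\Delta\ta}{L^\infty}$ appears in $Y_\infty$. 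The remaining details you defer are the routine commutator and product estimates the paper carries out at length; no gap in the approach.
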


\begin{proof}
To prove (\ref{ed3}), we reformulate \eqref{m2} by separating the linear terms
from the nonlinear ones. Setting
$$
\bar{\kappa}(\rho)\stackrel{\mathrm{def}}{=}\frac{\kappa}{\rho},
\quad I(a)\stackrel{\mathrm{def}}{=}\frac{a}{1+a},\quad\hbox{and}\quad J(a)=\ln({1+a} ),$$
we have
\begin{eqnarray}\label{tam3}
\left\{\begin{aligned}
&\partial_ta+ \div\u  ={F_1},\\
&\partial_t \u+\nabla a+\nabla \ta
={\n}\cdot\nabla \b-\nabla ({\n}\cdot\b)+{F_2},\\
&\partial_t\theta-\div(\bar{\kappa}(\rho)\nabla\ta)+ \div \u={F_3},\\
&\partial_t \b-\sigma \Delta \b={\n}\cdot\nabla \u-\n\div\u+{F_4},\\
&\div \b =0,\\
&(a,\u,\ta,\b)|_{t=0}=(a_0,\u_0,\ta_0,\b_0),
\end{aligned}\right.
\end{eqnarray}
where
\begin{align*}
{F_1}\stackrel{\mathrm{def}}{=}&-\u\cdot\nabla a-a\div \u,\nn\\
{F_2}\stackrel{\mathrm{def}}{=}&-\u\cdot\nabla \u+\b\cdot\nabla\b+\b\nabla\b+I(a)\nabla a-\theta\nabla J(a)
\nn\\
&-I(a)({\n}\cdot\nabla \b+\b\cdot\nabla\b-{\n}\nabla \b-\b\nabla\b),\nn\\
{F_3}\stackrel{\mathrm{def}}{=}&- \div(\theta \u)-\kappa(\nabla I(a))\nabla\ta+
\frac{ \sigma|\nabla\times   \   \b|^2}{1+a},\nn\\
{F_4}\stackrel{\mathrm{def}}{=}&-\u\cdot\nabla\b+\b\cdot\nabla\u-\b\div\u.
\end{align*}

For $\ell=0$,  \eqref{ping6} implies that
\begin{align*}
\frac12\frac{d}{dt}\norm{( a,\u,\ta,\b)}{L^2}^2+\sigma\norm{\nabla\b}{L^2}^2
+\kappa\norm{\nabla\ta}{L^2}^2\le0.
\end{align*}
We now set $\ell\ge 1$.
For any $1\le s\le \ell$, applying  $\la^s$ to both sides of \eqref{tam3} and
then taking the $L^2$ inner product with $\la^sa, \la^s\u, \la^s\ta,\la^s\b$
respectively gives
\begin{align*}
&\frac12\frac{d}{dt}\norm{(\la^sa,\la^s\u,\la^s\ta,\la^s\b)}{L^2}^2
-\int_{\T^3}\la^{s}\div(\bar{\kappa}(\rho)\nabla\ta)\cdot\la^{s} \ta\,dx+\sigma\int_{\T^3}|\la^{s} \nabla\b|^2\,dx \nn\\
&\quad= \int_{\T^3}\la^{s} {F_1}\cdot\la^{s} a\,dx+\int_{\T^3}\la^{s} {F_2}\cdot\la^{s} \u\,dx+\int_{\T^3}\la^{s} {F_3}\cdot\la^{s} \ta\,dx+\int_{\T^3}\la^{s} {F_4}\cdot\la^{s} \b\,dx,
\end{align*}
where we used the  following cancellations
\begin{align*}
&\int_{\T^3}\la^s \div\u\cdot\la^s a\,dx+\int_{\T^3}\la^s \nabla a\cdot\la^s \u\,dx=0;\nn\\
&\int_{\T^3}\la^s \nabla \ta\cdot\la^s \u\,dx+\int_{\T^3}\la^s \div\u\cdot\la^s \ta\,dx=0;\nn\\
&\int_{\T^3}\la^s({\n}\cdot\nabla \b)\cdot\la^s \u\,dx+\int_{\T^3}\la^s ({\n}\cdot\nabla \u)\cdot\la^s \b\,dx=0;\nn\\
&\int_{\T^3}\la^s\nabla ({\n}\cdot\b)\cdot\la^s \u\,dx+\int_{\T^3}\la^s (\n\div\u)\cdot\la^s \b\,dx=0.
\end{align*}

The second term of the left-hand side can be written as
\begin{align}\label{bianx1}
&-\int_{\T^3}\la^{s}\div(\bar{\kappa}(\rho)\nabla\ta)\cdot\la^{s} \ta\,dx\nn\\
&\quad
=\int_{\T^3}\la^{s}(\bar{\kappa}(\rho)\nabla\ta)\cdot\nabla\la^{s} \ta\,dx\nn\\
&\quad= \int_{\T^3}\bar{\kappa}(\rho)\nabla\la^{s}\ta\cdot\nabla\la^{s} \ta\,dx+\int_{\T^3}[\la^{s},\bar{\kappa}(\rho)]\nabla\ta\cdot\nabla\la^{s} \ta\,dx.
\end{align}
Due to \eqref{asu}, we have for any $t\in[0,T]$ that
\begin{align}\label{bianx2}
 \int_{\T^3}\bar{\kappa}(\rho)\nabla\la^{s}\ta\cdot\nabla\la^{s} \ta\,dx\ge c_0^{-1}\kappa\norm{\la^{s+1}\ta}{L^{2}}^2.
\end{align}
For the last term in \eqref{bianx1}, we first rewrite it into
\begin{align*}
\int_{\T^3}[\la^{s},\bar{\kappa}(\rho)]\nabla\ta\cdot\nabla\la^{s} \ta\,dx
=&\int_{\T^3}[\la^{s},\bar{\kappa}(\rho)-\kappa+\kappa]\nabla\ta\cdot\nabla\la^{s} \ta\,dx\nn\\
=&-\int_{\T^3}[\la^{s},\kappa I(a)]\nabla\ta\cdot\nabla\la^{s} \ta\,dx.
\end{align*}
Then, with the aid of  \eqref{eq:smalla}, we have
\begin{align}
&\Big|\int_{\T^3}[\la^{s},\kappa I(a)]\nabla\ta\cdot\nabla\la^{s}
\ta\,dx\Big|\notag\\
&\quad\le \,C\norm{\nabla\la^{s}\ta}{L^{2}}(\norm{\nabla I(a)
}{L^\infty}\norm{\la^{s} \ta}{L^2}+\norm{\nabla
\ta}{L^\infty}\norm{\la^{s}I(a)}{L^2})\nn\\
&\quad\le \,\frac{c_0^{-1}}{2}\kappa\norm{\la^{s+1}\ta}{L^{2}}^2+C(\norm{\nabla a
}{L^\infty}^2\norm{\la^{s} \ta}{L^2}^2+\norm{\nabla
\ta}{L^\infty}^2\norm{\la^{s}a}{L^2}^2),\label{bianx5}
\end{align}
where we have used Lemma \ref{jiaohuanzi}.  Inserting \eqref{bianx2} and
\eqref{bianx5} in \eqref{bianx1} leads to
\begin{align*}
-\int_{\T^3}\la^{s}\div(\bar{\kappa}(\rho)\nabla\ta)\cdot\la^{s} \ta\,dx
\ge&\frac{c_0^{-1}}{2}\kappa\norm{\la^{s+1}\ta}{L^{2}}^2\nn\\
&-C(\norm{\nabla a }{L^\infty}^2\norm{\la^{s} \ta}{L^2}^2+\norm{\nabla \ta}{L^\infty}^2\norm{\la^{s}a}{L^2}^2).
\end{align*}
Therefore,
\begin{align}\label{ed4}
&\frac{d}{dt}\norm{(\la^sa,\la^s\u,\la^s\ta,\la^s\b)}{L^2}^2
+
2\sigma\norm{\la^{s+1}\b}{L^2}^2+{c_0^{-1}}\kappa\norm{\la^{s+1}\ta}{L^2}^2\nn\\
&\quad\le\,
C(\norm{\nabla a }{L^\infty}^2\norm{\la^{s} \ta}{L^2}^2+\norm{\nabla
\ta}{L^\infty}^2\norm{\la^{s}a}{L^2}^2)+
C\int_{\T^3}\la^{s} {F_1}\cdot\la^{s} a\,dx\nn\\
&\qquad+C\int_{\T^3}\la^{s} {F_2}\cdot\la^{s} \u\,dx+C\int_{\T^3}\la^{s} {F_3}\cdot\la^{s} \ta\,dx+C\int_{\T^3}\la^{s} {F_4}\cdot\la^{s} \b\,dx.
\end{align}
In the following, we estimate successively each of terms on the right hand side
of \eqref{ed4}. For the first term in ${F_1}$, we rewrite it into
\begin{align}\label{ed5-1}
\int_{\T^3}\la^{s} (\u\cdot\nabla a)\cdot\la^{s} a\,dx
=&\int_{\T^3}(\la^{s} (\u\cdot\nabla a)-\u\cdot\nabla\la^{s}a)\cdot\la^{s} a\,dx
+\int_{\T^3}\u\cdot\nabla\la^{s}a\cdot\la^{s} a\,dx\nn\\
\stackrel{\mathrm{def}}{=}&A_1+A_2.
\end{align}
By Lemma \ref{jiaohuanzi}, one has
\begin{align}\label{we1}
A_1\le& C\norm{[\la^s,\u\cdot\nabla ]a}{L^2}\norm{\la^{s}a}{L^2}\nn\\
\le&C(\norm{\nabla \u}{L^\infty}\norm{\la^{s}a}{L^2}+\norm{\la^{s} \u}{L^2}\norm{\nabla a}{L^\infty})\norm{\la^{s}a}{L^2}\nn\\
\le&C(\norm{\nabla \u}{L^\infty}+\norm{\nabla a}{L^\infty})\|(a,\u)\|_{H^{\ell}}^2.
\end{align}
By integration by parts,
\begin{align}\label{we2}
A_2\le& \,C\norm{\nabla \u}{L^\infty}\|a\|_{H^{\ell}}^2.
\end{align}
To control
the second term in ${F_1}$, we first write
\begin{align*}
\int_{\T^3}\la^{s} (a\div\u)\cdot\la^{s} a\,dx
=&\sum_{1\le s\le \ell-1}\int_{\T^3}\la^{s} (a\div\u)\cdot\la^{s}
a\,dx+\sum_{s= \ell}\int_{\T^3}\la^{s} (a\div\u)\cdot\la^{s} a\,dx.
\end{align*}
The first term on the right-hand side can be bounded by
\begin{align*}
\sum_{1\le s\le \ell-1}\int_{\T^3}\la^{s} (a\div\u)\cdot\la^{s} a\,dx
\le & \,C \sum_{1\le s\le
\ell-1}(\|a\|_{L^\infty}\|\div\u\|_{H^{s}}+\|\div\u\|_{L^\infty}\|a\|_{H^{s}})\|a\|_{H^{s}}\nn\\
\le &C(\|a\|_{L^\infty}\|\u\|_{H^{\ell}}+\|\div\u\|_{L^\infty}\|a\|_{H^{\ell}})\|a\|_{H^{\ell}}\nn\\
\le &C(\|a\|_{L^\infty}\|(a,\u)\|_{H^{\ell}}^2+\|\nabla\u\|_{L^\infty}\|a\|_{H^{\ell}}^2).
\end{align*}
The estimate of the second term isn't straightforward due to $\ell +1$
derivatives on $\u$. The goal here is to reduce the number of derivatives to
$\ell$
\begin{align}\label{yihan2}
&\sum_{s= \ell}\int_{\T^3}\la^{s} (a\div\u)\cdot\la^{s} a\,dx\nn\\
=&\sum_{0\le \alpha\le {\ell-1}}\int_{\T^3}\la^{\ell-\alpha} a\la^{\alpha}\div\u\cdot\la^{\ell} a\,dx+\int_{\T^3}a\la^{\ell} \div\u\cdot\la^{\ell} a\,dx.
\end{align}
It then follows from interpolation inequalities that
 \begin{align}\label{yihan3}
\sum_{0\le \alpha\le {\ell-1}}\int_{\T^3}\la^{\ell-\alpha} a\la^{\alpha}\div\u\cdot\la^{\ell} a\,dx
\le&\norm{\div\u}{L^\infty}\norm{\la^{\ell} a}{L^2}^2
+\norm{\nabla a}{L^\infty}\norm{\la^{\ell-1} \div\u}{L^2}\norm{\la^{\ell} a}{L^2}\nn\\
\le&C\norm{\nabla\u}{L^\infty}\norm{a}{H^\ell}^2
+\norm{\nabla a}{L^\infty}(\norm{\u}{H^\ell}^2+\norm{a}{H^\ell}^2).
\end{align}
To bound the second term on the right-hand side of \eqref{yihan2}, we make use
of the equation
 $$\div\u=-\frac{\partial_t a+\u\cdot\nabla a}{1+a}$$
to obtain
\begin{align*}
\int_{\T^3}a\la^{\ell} \div\u\cdot\la^{\ell} a\,dx
=&-\int_{\T^3}a\la^{\ell} \left(\frac{\partial_t a+\u\cdot\nabla a}{1+a}\right)\cdot\la^{\ell} a\,dx \nn\\
=&-\int_{\T^3}a\la^{\ell} \left(\frac{\partial_t a}{1+a}\right)\cdot\la^{\ell} a\,dx -\int_{\T^3}a\la^{\ell} \left(\frac{\u\cdot\nabla a}{1+a}\right)\cdot\la^{\ell} a\,dx\nn\\
=&D_1+D_2.
\end{align*}
By Leibniz's rule,
\begin{align*}
D_1=&-\int_{\T^3}a\la^{\ell} \left(\frac{\partial_t
a}{1+a}\right)\cdot\la^{\ell} a\,dx = D_{11} + D_{12},
\end{align*}
where
\begin{align*}
D_{11} =&-\int_{\T^3}\frac{a}{1+a}\la^{\ell} \left({\partial_t
a}\right)\cdot\la^{\ell} a\,dx
-\int_{\T^3}a\partial_t a\la^{\ell}\left(\frac{1}{1+a}\right)\cdot\la^{\ell}
a\,dx,\\
D_{12} =&-\sum_{0<\alpha<\ell}\int_{\T^3}a \, \la^{\alpha} \left({\partial_t
	a}\right)\la^{\ell-\alpha}\left(\frac1{1+a}\right) \cdot\la^{\ell} a\,dx.
\end{align*}
By integration by parts,
\begin{align}
D_{11}
=&-\frac12\int_{\T^3}\frac{a}{1+a}\partial_t(\la^{\ell}a)^2 \,dx
-\int_{\T^3}a\partial_t a\la^{\ell}\left(\frac{1}{1+a}\right)\cdot\la^{\ell}
a\,dx\nn\\
=&-\frac12\frac{d}{dt}\int_{\T^3}\frac{a}{1+a}(\la^{\ell}a)^2
\,dx+\frac12\int_{\T^3}\frac{\partial_t a(\la^{\ell}a)^2}{(1+a)^2}\,dx\nn\\
&-\int_{\T^3}a\partial_t a\la^{\ell}\left(\frac{1}{1+a}\right)\cdot\la^{\ell}
a\,dx. \label{d11}
\end{align}
By the equation of $a$,
\begin{align}\label{yihan6}
\frac12\int_{\T^3}\frac{\partial_t a(\la^{\ell}a)^2}{(1+a)^2}\,dx
=&-\frac12\int_{\T^3}\frac{1}{(1+a)^2}(\u\cdot\nabla a+a\div \u+\div
\u)(\la^{\ell}a)^2\,dx\nn\\
\le&C((1+
\norm{a}{L^\infty})\norm{\nabla \u}{L^\infty}+\norm{\nabla a}{L^\infty}\norm{
\u}{L^\infty})\norm{\la^{\ell}a}{L^2}^2.
\end{align}
The last term in \eqref{d11} admits the same bound as the one in \eqref{yihan6}.
Therefore,
\begin{align}\label{yihan7}
D_{11}
\le&-\frac12\frac{d}{dt}\int_{\T^3}\frac{a}{1+a}(\la^{\ell}a)^2 \,dx+C((1+
\norm{a}{L^\infty})\norm{\nabla \u}{L^\infty}+\norm{\nabla a}{L^\infty}\norm{ \u}{L^\infty})\norm{\la^{\ell}a}{L^2}^2.
\end{align}
$D_{12}$ contains terms with intermediate number of derivatives. It is not
difficult to control the terms in $D_{12}$ through interpolation and obtain the
same bound as the one in \eqref{yihan7}. We now bound $D_2$,
\begin{align*}
D_2=&-\int_{\T^3}a\la^{\ell} \left(\frac{\u\cdot\nabla
a}{1+a}\right)\cdot\la^{\ell} a\,dx\nn\\
=&-\int_{\T^3}\frac{a}{1+a}\la^{\ell}(\u\cdot\nabla a)\cdot\la^{\ell} a\,dx-
\int_{\T^3}\frac{a}{1+a}(\u\cdot\nabla
a)\la^{\ell}\left(\frac{1}{1+a}\right)\cdot\la^{\ell}
a\,dx\\
&-\sum_{0<\alpha<\ell}\int_{\T^3}a \, \la^{\alpha} \left({\u\cdot\nabla
a}\right)\la^{\ell-\alpha}\left(\frac1{1+a}\right) \cdot\la^{\ell} a\,dx
&\nn\\
=&D_{2,1}+D_{2,2} +D_{2,3}.
\end{align*}
We rewrite $D_{2,1}$ as
\begin{align*}
D_{2,1}=&-\int_{\T^3}\frac{a}{1+a}\left(\la^{\ell}(\u\cdot\nabla a)-\u\cdot\nabla \la^{\ell} a\right)\cdot\la^{\ell} a\,dx+
\int_{\T^3}\frac{a}{1+a}\u\cdot\nabla\la^{\ell} a\cdot\la^{\ell} a\,dx\nn\\
=&D_{2,1}^{(1)}+D_{2,1}^{(2)}.
\end{align*}
By Lemma \ref{jiaohuanzi},
\begin{align*}
D_{2,1}^{(1)}\le& C\norma{\frac{a}{1+a}}{L^\infty}\norma{[\la^s,\u\cdot\nabla ]a}{L^2}\norm{\la^{\ell}a}{L^2}\nn\\
\le&C(\norm{\nabla \u}{L^\infty}\norm{\la^{\ell}a}{L^2}+\norm{\la^{\ell} \u}{L^2}\norm{\nabla a}{L^\infty})\norm{\la^{\ell}a}{L^2}\nn\\
\le&C(\norm{\nabla \u}{L^\infty}+\norm{\nabla a}{L^\infty})(\norm{\la^{\ell}a}{L^2}^2+\norm{\la^{\ell}\u}{L^2}^2).
\end{align*}
By integration by parts and Lemma \ref{daishu},
\begin{align*}
D_{2,1}^{(2)}\le& C\norma{\div\left(\frac{a\u}{1+a}\right)}{L^\infty}\norm{\la^{\ell}a}{L^2}^2\nn\\
\le&C(\norm{\nabla \u}{L^\infty}+\norm{ \u}{L^\infty}\norm{\nabla a}{L^\infty})\norm{\la^{\ell}a}{L^2}^2.
\end{align*}
By Lemma \ref{fuhe},
\begin{align*}
D_{2,2}
\le&\norm{a}{L^\infty}\norm{ \u}{L^\infty}\norm{\nabla a}{L^\infty}\norm{\la^{\ell}a}{L^2}^2.
\end{align*}
$D_{2,3}$ contains terms with intermediate derivatives and can be estimated by
the bounds of $D_{2,1}$ and $D_{2,2}$.
Therefore,
\begin{align*}
D_{2}
\le&C(\norm{\nabla \u}{L^\infty}+\norm{\nabla
a}{L^\infty}+(1+\norm{a}{L^\infty})\norm{ \u}{L^\infty}\norm{\nabla
a}{L^\infty})(\norm{a}{H^\ell}^2+ \norm{\u}{H^\ell}^2),
\end{align*}
which, together with \eqref{yihan7},  leads to
\begin{align}\label{yihan15}
\int_{\T^3}a\la^{\ell} \div\u\cdot\la^{\ell} a\,dx
\le&-\frac12\frac{d}{dt}\int_{\T^3}\frac{a}{1+a}(\la^{\ell}a)^2 \,dx\nn\\
&+C(\norm{\nabla \u}{L^\infty}+\norm{\nabla a}{L^\infty}+\norm{
\u}{L^\infty}\norm{\nabla a}{L^\infty})(\norm{a}{H^\ell}^2
+ \norm{\u}{H^\ell}^2).
\end{align}
Combining \eqref{yihan3} and \eqref{yihan15} leads to
\begin{align}\label{yihan16}
\int_{\T^3}\la^{s} (a\div\u)\cdot\la^{s} a\,dx
\le&-\frac12\frac{d}{dt}\int_{\T^3}\frac{a}{1+a}(\la^{\ell}a)^2 \,dx\nn\\
&+C(\norm{\nabla \u}{L^\infty}+\norm{\nabla a}{L^\infty}+\norm{ \u}{L^\infty}\norm{\nabla a}{L^\infty})(\norm{a}{H^\ell}^2+\norm{\u}{H^\ell}^2).
\end{align}
\eqref{we1}, \eqref{we2} and \eqref{yihan16} yield
\begin{align*}
\int_{\T^3}\la^{s} {F_1}\cdot\la^{s} a\,dx
\le&-\frac12\frac{d}{dt}\int_{\T^3}\frac{a}{1+a}(\la^{s}a)^2 \,dx\nn\\
&+C(\norm{\nabla \u}{L^\infty}+\norm{\nabla a}{L^\infty}+\norm{ \u}{L^\infty}\norm{\nabla a}{L^\infty})(\norm{\la^{s}a}{L^2}^2+\norm{\la^{s}\u}{L^2}^2).
\end{align*}
We turn to the last term in \eqref{ed4}. For the first term in ${F_4}$, we
obtain via similar estimates as in \eqref{we1} and \eqref{we2},
\begin{align*}
\int_{\T^3}\la^{s} (\u\cdot\nabla \b)\cdot\la^{s} \b\,dx
\le&C(\norm{\nabla \u}{L^\infty}+\norm{\nabla \b}{L^\infty})(\norm{\la^{s}\u}{L^2}^2+\norm{\la^{s}\b}{L^2}^2).
\end{align*}
For the last two terms in ${F_4}$, by Lemma \ref{daishu} and the H\"{o}lder
inequality,
\begin{align*}
\int_{\T^3}\la^{s} (\b\cdot\nabla\u-\b\div\u)\cdot\la^{s} \b\,dx
         \leq&\frac{\sigma}{16}\|\Lambda^{s+1}\b\|_{L^2}^2 +
         C(\|\nabla\u\|_{L^{\infty}}^2+\|\b\|_{L^{\infty}}^2)
         \norm{\la^{s}\b}{L^2}^2.
\end{align*}
As a consequence,
\begin{align}\label{yihan20}
\int_{\T^3}\la^{s} {F_4}\cdot\la^{s} \b\,dx
\le&\frac{\sigma}{16}\|\b\|_{H^{s+1}}^2\nn\\
&+C(\norm{\nabla \u}{L^\infty}+\norm{\nabla \b}{L^\infty}+\|\b\|_{L^{\infty}}^2)(\norm{\la^{s}\u}{L^2}^2+\norm{\la^{s}\b}{L^2}^2).
\end{align}
We now bound the term involving ${F_2}$ in \eqref{ed4}. To do so, we  write
\begin{align}\label{yihan21}
\int_{\T^3}\la^{s} {F_2}\cdot\la^{s} \u\,dx=\sum_{i=3}^{8}A_i
\end{align}
with
\begin{align*}
&A_3\stackrel{\mathrm{def}}{=}-\int_{\T^3}\la^{s} (\u\cdot\nabla \u)\cdot\la^{s} \u\,dx,\ \qquad
A_4\stackrel{\mathrm{def}}{=}\int_{\T^3}\la^{s} (\b\cdot\nabla\b)\cdot\la^{s}
\u\,dx,\nn\\
&A_5\stackrel{\mathrm{def}}{=}-\int_{\T^3}\la^{s} \left(\frac{\ta-a}{1+a}\right)\cdot\la^{s} \u\,dx,\qquad
A_6\stackrel{\mathrm{def}}{=}\int_{\T^3}\la^{s} (I(a)({\n}\cdot\nabla
\b-{\n}\nabla \b))\cdot\la^{s} \u\,dx,\nn\\
&A_7\stackrel{\mathrm{def}}{=}\int_{\T^3}\la^{s} (I(a)(\b\cdot\nabla\b-\b\nabla\b))\cdot\la^{s} \u\,dx.\nn
\end{align*}
As in \eqref{ed5-1},
\begin{align*}
A_3\le C\norm{\nabla \u}{L^\infty}\norm{\la^{s}\u}{L^2}^2.
\end{align*}
In view of $\div\b=0,$ one can write
\begin{align*}
A_4=&\int_{\T^3}\la^{s} \div(\b\otimes\b)\cdot\la^{s} \u\,dx
\leq C\|\b\|_{L^{\infty}}\|\u\|_{H^{s}}
        \norm{\la^{s+1}\b}{L^2}\nonumber\\
         \leq&\frac{\sigma}{16}\|\b\|_{H^{s+1}}^2+C\|\b\|_{L^{\infty}}^2
         \|\u\|_{H^{s}}.
\end{align*}
To bound $A_5$, we write
$$\G :=\frac{\ta-a}{1+a}.
$$
Then
\begin{align*}
A_5=&-
\int_{\T^3}\la^{s} \left(\frac{\ta-a}{1+a}\nabla a\right)\cdot\la^{s} \u\,dx
=-\int_{\T^3}\la^{s} \left(\G\nabla a\right)\cdot\la^{s} \u\,dx\nn\\
=&-\sum_{1\le s\le \ell-1}\int_{\T^3}\la^{s} \left(\G\nabla a\right)\cdot\la^{s} \u\,dx
-\sum_{s= \ell}\int_{\T^3}\la^{s} \left(\G\nabla a\right)\cdot\la^{s} \u\,dx\nn\\
:=& \, A_{5,1}+A_{5,2}.
\end{align*}
$A_{51}$ can be bounded directly via Lemma \ref{daishu},
\begin{align*}
A_{5,1}=&-\sum_{1\le s\le \ell-1}\int_{\T^3}\la^{s} \left(\G\nabla a\right)\cdot\la^{s} \u\,dx\nn\\
\le & \,C(\|\G\|_{L^\infty}\|\nabla a\|_{H^{\ell-1}}+\|\nabla
a\|_{L^\infty}\|a\|_{H^{\ell-1}})\|\u\|_{H^{\ell}}\nn\\
\le &\, C(\|(a,\ta)\|_{L^\infty}+\|\nabla
a\|_{L^\infty})\|(a,\u)\|_{H^{\ell}}^2.
\end{align*}
The estimates of $A_{5,2}$ is more elaborate and the aim is to avoid $(\ell
+1)$th derivative on $a$.
\begin{align*}
A_{5,2}=&-\int_{\T^3}\la^{\ell} \left(\G\nabla a\right)\cdot\la^{\ell}
\u\,dx\nn\\
=&-\left(\sum_{0\le \alpha\le {\ell-1}}\int_{\T^3}\la^{\ell-\alpha}
\G\la^{\alpha}\nabla a\cdot\la^{\ell} a\,dx\right)-\int_{\T^3}
\G\la^{\ell}\nabla
a\cdot\la^{\ell} \u\,dx\nn\\
=&-\sum_{0\le \alpha\le {\ell-1}}\int_{\T^3}\la^{\ell-\alpha} \G\la^{\alpha}\nabla a\cdot\la^{\ell} a\,dx-\int_{\T^3} \G\la^{\ell}\nabla a\cdot\la^{\ell} \u\,dx\nn\\
=&A_{5,2}^{(1)}+A_{5,2}^{(2)}.
\end{align*}
By Lemma \ref{daishu} and interpolation inequalities,
\begin{align*}
A_{5,2}^{(1)}=&-\sum_{0\le \alpha\le {\ell-1}}\int_{\T^3}\la^{\ell-\alpha} \G\la^{\alpha}\nabla a\cdot\la^{\ell} a\,dx\nn\\
\le& \,C\left(\norm{\G}{L^\infty}\norm{\la^{\ell} a}{L^2}^2
+\norm{\nabla a}{L^\infty}\norm{\la^{\ell-1} \G}{L^2}\norm{\la^{\ell}
a}{L^2}\right)\nn\\
\le&\, C\norm{(a,\ta)}{L^\infty}\norm{a}{H^\ell}^2
+ \,C\norm{\nabla a}{L^\infty}(\norm{a}{H^\ell}^2+\norm{\ta}{H^\ell}^2).
\end{align*}
By integration by parts,
\begin{align}\label{yihan26}
A_{5,2}^{(2)}=&-\int_{\T^3} \G\la^{\ell}\nabla a\cdot\la^{\ell} \u\,dx\nn\\
=&\int_{\T^3} \nabla\G\la^{\ell} a\cdot\la^{\ell} \u\,dx+\int_{\T^3}
\G\la^{\ell} a\cdot\la^{\ell} \div\u\,dx.
\end{align}
Clearly,
\begin{align*}
\int_{\T^3} \nabla\G\la^{\ell} a\cdot\la^{\ell} \u\,dx
\le& \,C\|\nabla \G\|_{L^{\infty}}(\|a\|_{H^{\ell}}^2+\|\u\|_{H^{\ell}}^2)\nn\\
\le& \,C(\|\nabla a\|_{L^{\infty}}+\|\nabla
\ta\|_{L^{\infty}})(\|a\|_{H^{\ell}}^2+\|\u\|_{H^{\ell}}^2).
\end{align*}
To bound the second term in \eqref{yihan26}, we invoke
 $$\div\u=-\frac{\partial_t a+\u\cdot\nabla a}{1+a}$$
 to obtain
\begin{align*}
\int_{\T^3}\G\la^{\ell} \div\u\cdot\la^{\ell} a\,dx
=&-\int_{\T^3}\G\la^{\ell} \left(\frac{\partial_t a+\u\cdot\nabla a}{1+a}\right)\cdot\la^{\ell} a\,dx \nn\\
=&-\int_{\T^3}\G\la^{\ell} \left(\frac{\partial_t a}{1+a}\right)\cdot\la^{\ell}
a\,dx -\int_{\T^3}\G\la^{\ell} \left(\frac{\u\cdot\nabla
a}{1+a}\right)\cdot\la^{\ell} a\,dx\nn\\
=&\, H_1+H_2.
\end{align*}
By the product rule,
\begin{align*}
H_1=&-\int_{\T^3}\G\la^{\ell} \left(\frac{\partial_t a}{1+a}\right)\cdot\la^{\ell} a\,dx\nn\\
=&-\int_{\T^3}\frac{\G}{1+a}\la^{\ell} \left({\partial_t a}\right)\cdot\la^{\ell} a\,dx
-\int_{\T^3}\G\partial_t a\la^{\ell}\left(\frac{1}{1+a}\right)\cdot\la^{\ell} a\,dx\nn\\
=&-\frac12\int_{\T^3}\frac{\G}{1+a}\partial_t(\la^{\ell}a)^2 \,dx
-\int_{\T^3}\G\partial_t a\la^{\ell}\left(\frac{1}{1+a}\right)\cdot\la^{\ell}
a\,dx\nn\\
=&-\frac12\frac{d}{dt}\int_{\T^3}\frac{\G}{1+a}(\la^{\ell}a)^2 \,dx
+\frac12\int_{\T^3}\partial_t \left(\frac{\G}{1+a}\right)(\la^{\ell}a)^2\,dx
\nn\\
& - \int_{\T^3}\G\partial_t
a\la^{\ell}\left(\frac{1}{1+a}\right)\cdot\la^{\ell}
a\,dx\nn\\
=&-\frac12\frac{d}{dt}\int_{\T^3}\frac{\G}{1+a}(\la^{\ell}a)^2 \,dx
+\frac12\int_{\T^3}\frac{\G}{(1+a)^2}\partial_t a(\la^{\ell}a)^2\,dx
\nn\\
& +\frac12\int_{\T^3}
\left(\frac{\partial_t\G}{1+a}\right)(\la^{\ell}a)^2\,dx -
\int_{\T^3}\G\partial_t
a\la^{\ell}\left(\frac{1}{1+a}\right)\cdot\la^{\ell}
a\,dx\nn\\
=&H_{1,1}+H_{1,2}+H_{1,3} + H_{1,4}.
\end{align*}
To estimate the terms on the right, we invoke the following simple bounds,
\begin{align*}
\norma{\frac{\G}{(1+a)^2}}{L^\infty}
=&\norma{\frac{\ta-a}{(1+a)^3}}{L^\infty}\le \,  C(
\norm{a}{L^\infty}+\norm{\ta}{L^\infty})
\end{align*}
and
\begin{align*}
\norm{\partial_ta}{L^\infty}
=&\norm{-\u\cdot\nabla a-a\div \u-\div \u}{L^\infty}\nn\\
\le& C((1+
\norm{a}{L^\infty})\norm{\nabla \u}{L^\infty}+\norm{\nabla a}{L^\infty}\norm{ \u}{L^\infty}).
\end{align*}
By the definition of $\G$,
 \begin{align*}
 \partial_t\G
 =\frac{\partial_t\ta}{1+a}-\frac{1+\ta}{(1+a)^2}\partial_t a.
 \end{align*}
Invoking the equation of $\ta$,
 \begin{align*}
\norm{\partial_t\ta}{L^\infty}
=&\norma{-\u\cdot\nabla\ta-\div\u- \div(\theta \u)+\kappa\Delta\ta -\kappa I(a)\Delta\ta+\frac{|\nabla\times\b|^2}{1+a}}{L^\infty}\nn\\
\le& C((1+
\norm{\ta}{L^\infty})\norm{\nabla \u}{L^\infty}+\norm{\nabla \ta}{L^\infty}\norm{ \u}{L^\infty})\nn\\
&+C(1+
\norm{a}{L^\infty})\norm{\Delta \ta}{L^\infty}+\norm{\nabla \b}{L^\infty}^2.
\end{align*}
Therefore,
\begin{align*}
\norma{\frac{\partial_t\G}{1+a}}{L^\infty}
\le& \,C((1+
\norm{\ta}{L^\infty})\norm{\nabla \u}{L^\infty}+(\norm{\nabla \ta}{L^\infty} +
\norm{\nabla a}{L^\infty})\norm{ \u}{L^\infty})\nn\\
&+C(1+
\norm{a}{L^\infty})\norm{\Delta \ta}{L^\infty}+\norm{\nabla \b}{L^\infty}^2.
\end{align*}
As a consequence,
\begin{align}\label{yihan35}
H_{1,2}+H_{1,3}=&\frac12\int_{\T^3}\frac{\G}{(1+a)^2}\partial_t a(\la^{\ell}a)^2\,dx
+\frac12\int_{\T^3} \left(\frac{\partial_t\G}{1+a}\right)(\la^{\ell}a)^2\,dx\nn\\
\le& C(
\norm{a}{L^\infty}+\norm{\ta}{L^\infty})((1+
\norm{a}{L^\infty})\norm{\nabla \u}{L^\infty}+\norm{\nabla a}{L^\infty}\norm{ \u}{L^\infty})
\norm{a}{H^{\ell}}^2
\nn\\
&+ C((1+
\norm{a}{L^\infty})\norm{\nabla \u}{L^\infty}+\norm{\nabla a}{L^\infty}\norm{ \u}{L^\infty})\norm{a}{H^{\ell}}^2\nn\\
  &+C((1+
\norm{\ta}{L^\infty})\norm{\nabla \u}{L^\infty}+\norm{\nabla \ta}{L^\infty}\norm{ \u}{L^\infty})\norm{a}{H^{\ell}}^2\nn\\
&+C\left((1+
\norm{a}{L^\infty})\norm{\Delta \ta}{L^\infty}+\norm{\nabla \b}{L^\infty}^2\right)\norm{a}{H^{\ell}}^2.
\end{align}
The estimate of $H_{1,4}$ is direct,
\begin{align*}
	H_{1,4} \le \|\G\|_{L^\infty} \norm{\partial_t\ta}{L^\infty}
	\norma{\la^{\ell}\left(\frac{1}{1+a}\right)}{L^2}\,
	\norm{\la^{\ell}a}{L^2},
\end{align*}
which is certainly majorized by the upper bound in \eqref{yihan35}. Therefore,
\begin{align*}
H_1
\le&-\frac12\frac{d}{dt}\int_{\T^3}\frac{\G}{1+a}(\la^{\ell}a)^2 \,dx+CY_\infty(t)\norm{a}{H^{\ell}}^2.
\end{align*}
To estimate $H_2$, we further divide it into two terms,
\begin{align*}
H_2=&-\int_{\T^3}\G\la^{\ell} \left(\frac{\u\cdot\nabla a}{1+a}\right)\cdot\la^{\ell} a\,dx\nn\\
=&-\int_{\T^3}\frac{\G}{1+a}\la^{\ell}(\u\cdot\nabla a)\cdot\la^{\ell} a\,dx-
\int_{\T^3}\frac{\G}{1+a}(\u\cdot\nabla a)\la^{\ell}
\left(\frac1{1+a}\right)\cdot\la^{\ell} a\,dx\nn\\
=&H_{2,1}+H_{2,2}.
\end{align*}
The idea is still to avoid $(\ell +1)$th derivative on $a$ in $H_{2,1}$. To
serve this purpose, we use a commutator and write
\begin{align*}
H_{2,1}=&-\int_{\T^3}\frac{\G}{1+a}\left(\la^{\ell}(\u\cdot\nabla a)-\u\cdot\nabla \la^{\ell} a\right)\cdot\la^{\ell} a\,dx+
\int_{\T^3}\frac{\G}{1+a}\u\cdot\nabla\la^{\ell} a\cdot\la^{\ell} a\,dx\nn\\
=&H_{2,1}^{(1)}+H_{2,1}^{(2)}.
\end{align*}
By Lemma \ref{jiaohuanzi},
\begin{align*}
H_{2,1}^{(1)}\le&\,
C\norma{\frac{\G}{1+a}}{L^\infty}\norma{[\la^\ell,\u\cdot\nabla
]a}{L^2}\norm{\la^{\ell}a}{L^2}\nn\\
\le& \,C(
\norm{a}{L^\infty}+\norm{\ta}{L^\infty})(\norm{\nabla \u}{L^\infty}\norm{\la^{\ell}a}{L^2}+\norm{\la^{\ell} \u}{L^2}\norm{\nabla a}{L^\infty})\norm{\la^{\ell}a}{L^2}\nn\\
\le& \,C(
\norm{a}{L^\infty}+\norm{\ta}{L^\infty})(\norm{\nabla \u}{L^\infty}+\norm{\nabla a}{L^\infty})\norm{(a,\u)}{H^{\ell}}^2.
\end{align*}
By integration by parts,
\begin{align*}
H_{2,1}^{(2)}\le& \,
C\norma{\div\left(\frac{\G\u}{1+a}\right)}{L^\infty}\norm{\la^{\ell}a}{L^2}^2\nn\\
\le&C(\norm{\nabla \u}{L^\infty}+\norm{ \u}{L^\infty}(\norm{\nabla a}{L^\infty}+\norm{\nabla \ta}{L^\infty}))\norm{a}{H^{\ell}}^2.
\end{align*}
By Lemma \ref{fuhe}, $H_{2,2}$ can be bounded by
\begin{align*}
	H_{2,2} &\le \norma{\frac{\G\u}{1+a}}{L^\infty} \, \norm{
	\u}{L^\infty}\,\norm{\nabla a}{L^\infty} \norma{\la^{\ell}
		\left(\frac1{1+a}\right)}{L^2} \norm{\la^{\ell} a}{L^2} \\
		 &\le C\, \left(\norm{a}{L^\infty} +  \norm{\ta}{L^\infty}\right)
		 \norm{
		 	\u}{L^\infty}\,\norm{\nabla a}{L^\infty} \norm{a}{H^{\ell}}^2.
\end{align*}	
Therefore,
\begin{align*}
H_{2}
\le& \, C Y_\infty(t)\norm{(a,\u)}{H^{\ell}}^2
\end{align*}
and
\begin{align*}
\int_{\T^3}\G\la^{\ell} \div\u\cdot\la^{\ell} a\,dx
\le&-\frac12\frac{d}{dt}\int_{\T^3}\frac{\G}{1+a}(\la^{\ell}a)^2
\,dx+CY_\infty(t)\norm{(a,\u)}{H^{\ell}}^2.
\end{align*}
This leads to the following upper bound on $A_5$,
\begin{align*}
A_5\le&-\frac12\frac{d}{dt}\int_{\T^3}\frac{\G}{1+a}(\la^{\ell}a)^2 \,dx+CY_\infty(t)\norm{(a,\u)}{H^{\ell}}^2.
\end{align*}
By Lemma \ref{daishu},
\begin{align*}
A_6 \leq&C\big(\|I(a)\|_{L^{\infty}}\|\nabla \b\|_{H^{s}}
        +\|I(a)\|_{H^{s}}\|\nabla \b\|_{L^{\infty}}\big)\norm{\la^{s}\u}{L^2}\nonumber\\
         \leq&\frac{\sigma}{16}\norm{\la^{s+1}\b}{L^2}^2
         +C(\|a\|_{L^{\infty}}^2\norm{\la^{s}\u}{L^2}^2
        +\|\nabla
        \b\|_{L^{\infty}}(\norm{\la^{s}a}{L^2}^2+\norm{\la^{s}\u}{L^2}^2)).
\end{align*}
By Lemmas \ref{daishu} and \ref{fuhe},
\begin{align*}
A_7 \leq& \, C\big(\|I(a)\|_{L^{\infty}}\|\b\nabla \b\|_{H^{s}}
        + \|I(a)\|_{H^{s}}\|\b\nabla
        \b\|_{L^{\infty}}\big)\norm{\la^{s}\u}{L^2}\nn\\
\le &\,C\big(\|I(a)\|_{L^{\infty}}
(\|\b\|_{L^{\infty}}\| \nabla\b\|_{H^{s}}+\|\nabla\b\|_{L^{\infty}}\|
\b\|_{H^{s}}) + \|I(a)\|_{H^{s}}\|\b\nabla
\b\|_{L^{\infty}}\big)\norm{\la^{s}\u}{L^2}\\
\leq&\frac{\sigma}{16}\norm{\la^{s+1}\b}{L^2}^2
+C\| a\|_{L^{\infty}}\|\nabla \b\|_{L^{\infty}}(\norm{\la^{s}\b}{L^2}^2+\norm{\la^{s}\u}{L^2}^2)
\nn\\
& + \,C(\|a\|_{L^{\infty}}^2\|\b\|_{L^{\infty}}^2\norm{\la^{s}\u}{L^2}^2
        +\| \b\|_{L^{\infty}}\|\nabla
        \b\|_{L^{\infty}}(\norm{\la^{s}a}{L^2}^2+\norm{\la^{s}\u}{L^2}^2)).
\end{align*}
Inserting the bounds for $A_3$ through $A_7$ in \eqref{yihan21} yields
\begin{align}\label{ed15}
\int_{\T^3}\la^{s} {F_2}\cdot\la^{s} \u\,dx
\leq&\frac{\sigma}{8}\norm{\nabla
\b}{H^{\ell}}^2-\frac12\frac{d}{dt}\int_{\T^3}\frac{\G}{1+a}(\la^{\ell}a)^2
\,dx+CY_\infty(t)\norm{(a,\u,\b)}{H^{\ell}}^2.
\end{align}
Finally we bound the term involving ${F_3}$ in \eqref{ed4},
\begin{align*}
\int_{\T^3}\la^{s} {F_3}\cdot\la^{s} \ta\,dx=A_{8}+A_{9}+A_{10},
\end{align*}
where
\begin{align*}
&A_{8}\stackrel{\mathrm{def}}{=}-\int_{\T^3}\la^{s} (\div(\theta
\u))\cdot\la^{s} \ta\,dx,\qquad
A_{9}\stackrel{\mathrm{def}}{=}-\kappa\int_{\T^3}\la^{s} ((\nabla
I(a))\nabla\ta)\cdot\la^{s} \ta\,dx,\nn\\
&A_{10}\stackrel{\mathrm{def}}{=}\int_{\T^3}\la^{s}
\Big(\frac{|\nabla\times\b|^2}{1+a} \Big)\cdot\la^{s} \ta\,dx.
\end{align*}
$A_{8}$ can be bounded similarly as in \eqref{yihan20},
\begin{align}\label{taw2}
A_{8}
\le&\frac{\kappa}{16}\|\la^{s+1}\ta\|_{L^2}^2+C(\norm{
\u}{L^\infty}^2+\|\ta\|_{L^{\infty}}^2)(\norm{\la^{s}\ta}{L^2}^2+\norm{\la^{s}\u}{L^2}^2).
\end{align}
As in $A_6$ and $A_7$,
\begin{align}\label{taw3}
A_{9}\leq&\frac{\kappa}{16}\norm{\la^{s+1}\ta}{L^2}^2+C(\|\nabla a\|_{L^{\infty}}^2\norm{\la^{s}\ta}{L^2}^2
        +\|\nabla \ta\|_{L^{\infty}}^2\norm{\la^{s}a}{L^2}^2).
\end{align}
To bound $A_{10}$, we first rewrite it as
\begin{align}\label{taw4}
A_{10} \leq&\int_{\T^3}\la^{s} \Big((1-I(a))|\nabla\times\b|^2\Big)\cdot\la^{s}
\ta\,dx\nn\\
= &\int_{\T^3}\la^{s} \Big(|\nabla\times\b|^2\Big)\cdot\la^{s}
\ta\,dx -\int_{\T^3}\la^{s} \Big(I(a)|\nabla\times\b|^2\Big)\cdot\la^{s}
\ta\,dx\nn\\
:=&A_{10}^{(1)}+A_{10}^{(2)}.
\end{align}
By Lemma \ref{daishu},
\begin{align*}
A_{10}^{(1)}\leq&C\big(\|\nabla\b\|_{L^{\infty}}\|\nabla\b\|_{H^{s-1}}
        \big)\norm{\la^{s+1}\ta}{L^2}\nn\\
         \leq&\frac{\kappa}{16}\|\la^{s+1}\ta\|_{L^2}^2+C\|\nabla\b\|_{L^{\infty}}^2
         \norm{\la^{s}\b}{L^2}^2,
\end{align*}
and
\begin{align*}
A_{10}^{(2)}\leq&\,C \left(\|I(a)\|_{L^{\infty}}\||\nabla\b|^2\|_{H^{s-1}}
+\||\nabla\b|^2\|_{L^{\infty}}\|I(a)\|_{H^{s-1}}\right)
        \norm{\la^{s+1}\ta}{L^2}\nn\\
         \leq&\frac{\kappa}{16}\|\la^{s+1}\ta\|_{L^2}^2
         +C\big(\|a\|_{L^{\infty}}^2\|\nabla\b\|_{L^{\infty}}^2\|\b\|_{H^{s}}^2
+\|\nabla\b\|_{L^{\infty}}^4\|a\|_{H^{s}}^2
        \big).
\end{align*}
Inserting the above two estimates in \eqref{taw4}, we obtain
\begin{align}\label{taw7}
A_{10}\leq& \, \frac{\kappa}{8}\|\la^{s+1}\ta\|_{L^2}^2
         +C(1+\|a\|_{L^{\infty}}^2+\|\nabla\b\|_{L^{\infty}}^2)
         \|\nabla\b\|_{L^{\infty}}^2(\|\b\|_{H^{s}}^2+\|a\|_{H^{s}}^2).
\end{align}
Combining \eqref{taw2}, \eqref{taw3} and \eqref{taw7} leads to
\begin{align}\label{taw8}
\int_{\T^3}\la^{s} {F_3}\cdot\la^{s} \ta\,dx
\le&
\,\frac{3\kappa}{16}\norm{\la^{s+1}\ta}{L^2}^2+C(1+\|a\|_{L^{\infty}}^2+\|\nabla\b\|_{L^{\infty}}^2)
         \|\nabla\b\|_{L^{\infty}}^2\|(a,\b)\|_{H^{s}}^2\nn\\
&+C(\norm{(\nabla \b,\nabla \ta)}{L^\infty}+\|(\u,\ta,\nabla a,\nabla\ta)\|_{L^{\infty}}^2)\|(a,\u,\ta)\|_{H^{s}}^2.
\end{align}
Inserting \eqref{yihan7}, \eqref{yihan20}, \eqref{ed15}, and  \eqref{taw8} in
\eqref{ed4} and summing up \eqref{basic}, we obtain
\begin{align*}
&\frac12\frac{d}{dt}\norm{(\la^sa,\la^s\u,\la^s\ta,\la^s\b)}{L^2}^2
+\frac12\frac{d}{dt}\int_{\T^3}\frac{a}{1+a}(\la^{\ell}a)^2 \,dx
+\frac12\frac{d}{dt}\int_{\T^3}\frac{\ta-a}{(1+a)^2}(\la^{\ell}a)^2 \,dx\nn\\
&\qquad+ \sigma\norm{\la^{\ell+1}\b}{L^2}^2+\frac12
{c_0^{-1}}\kappa\norm{\la^{\ell+1}\ta}{L^2}^2\nn\\
&\quad\le CY_\infty(t)\|(a,\u,\ta,\b)\|_{H^{\ell}}^2,
\end{align*}
which implies that
\begin{align*}
&\frac12\frac{d}{dt}\left(\|(a,\u,\ta,\b)\|_{H^{\ell}}^2
+\int_{\T^3}\frac{\ta+a^2}{(1+a)^2}(\la^{\ell}a)^2 \,dx\right)+
\sigma\norm{\la^{\ell+1}\b}{L^2}^2+\frac12
{c_0^{-1}}\kappa\norm{\la^{\ell+1}\ta}{L^2}^2\nn\\
&\quad\le CY_\infty(t)\|(a,\u,\ta,\b)\|_{H^{\ell}}^2.
\end{align*}
This completes the proof of Proposition \ref{high2}.
\end{proof}

\vskip .3in
\section{ The dissipation of $a$}
\label{sec:adiss}

The equation of $a$ doesn't involve any damping or dissipation, but the
coupling and interaction between the equation of $a$ and that of $\u$ actually
generates some weak dissipation and stabilizing effect. Mathematically there is
a wave structure in the equations of $a$ and $\div\u$. In fact, the
linearized equations
of $a$ and $\div \u$ are given by
\begin{align*}
	& \partial_t a + \div \u =0,\\
	& \partial_t \div \u+ R\Delta a+ R\Delta \ta
	=-\Delta ({\n}\cdot\b),
\end{align*}
which can be converted into the following wave equations
\begin{align*}
	& \partial_{tt} a - R \Delta a = R \Delta \theta - \Delta({\n}\cdot\b) ,\\
	& \partial_{tt} \div \u - R\Delta\div \u =- R\Delta \partial_t\ta
	-\Delta ({\n}\cdot\partial_t\b).
\end{align*}
Making use of this structure by constructing suitable Lyapunov functional, we
are able to prove the following proposition.

\begin{proposition} \label{adis}
Let ${N}\ge 4r+7$ with $r>2$. Assume the solution $(a(t), \u(t), \ta(t),
\b(t))$ to \eqref{m2} satisfies
	\begin{align}\label{shiping2}
		\sup_{t\in[0,T]}\norm{(a(t), \u(t), \ta(t), \b(t))}{H^{N}}\le \delta
	\end{align}
	for some $0<\delta<1.$ Then
\begin{align}\label{yiming1}
	&\norm{\nabla a}{H^{{r+3}}}^2+\sum_{0\le s\le r+3}\frac{d}{dt}{\big\langle{\Lambda^{s}} \u,{\Lambda^{s}}\nabla a\big\rangle}\nn\\
	&\quad\le C\norm{\div\u}{H^{{r+3}}}^2+
C(1+\delta^2)\norm{ (\ta,\b)}{H^{r+4}}^2+ C\delta^2 \norm{{\n}\cdot\nabla \u}{H^{r+3}}^2.
\end{align}
\end{proposition}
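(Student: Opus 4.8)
The plan is a cross-term (interaction) energy estimate built on the Lyapunov quantity $\sum_{0\le s\le r+3}\langle\Lambda^{s}\u,\Lambda^{s}\nabla a\rangle$, the combination dictated by the wave structure coupling $a$ and $\div\u$. Taking $R=1$, I differentiate in time and substitute the $a$- and $\u$-equations of \eqref{m2}: for fixed $s$,
\[
\frac{d}{dt}\langle\Lambda^{s}\u,\Lambda^{s}\nabla a\rangle=\langle\Lambda^{s}\partial_t\u,\Lambda^{s}\nabla a\rangle+\langle\Lambda^{s}\u,\Lambda^{s}\nabla\partial_t a\rangle .
\]
Replacing $\partial_t\u=-\nabla a-\nabla\theta+\n\cdot\nabla\b-\nabla(\n\cdot\b)+f_2$ produces the coercive term $-\|\Lambda^{s}\nabla a\|_{L^2}^2$ plus the couplings $-\langle\Lambda^{s}\nabla\theta,\Lambda^{s}\nabla a\rangle$, $\langle\Lambda^{s}(\n\cdot\nabla\b),\Lambda^{s}\nabla a\rangle$, $-\langle\Lambda^{s}\nabla(\n\cdot\b),\Lambda^{s}\nabla a\rangle$ and $\langle\Lambda^{s}f_2,\Lambda^{s}\nabla a\rangle$; replacing $\partial_t a=-\div\u+f_1$ and integrating by parts twice turns $-\langle\Lambda^{s}\u,\Lambda^{s}\nabla\div\u\rangle$ into $\|\Lambda^{s}\div\u\|_{L^2}^2$ and leaves $\langle\Lambda^{s}\u,\Lambda^{s}\nabla f_1\rangle=-\langle\Lambda^{s}\div\u,\Lambda^{s}f_1\rangle$. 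Summing over $0\le s\le r+3$ and moving the exact time derivative to the left side gives the skeleton inequality
\[
\|\nabla a\|_{H^{r+3}}^2+\sum_{0\le s\le r+3}\frac{d}{dt}\langle\Lambda^{s}\u,\Lambda^{s}\nabla a\rangle\le C\|\div\u\|_{H^{r+3}}^2+(\mathrm I)+(\mathrm{II}),
\]
where $(\mathrm I)$ collects the three linear couplings with $\theta$ and $\b$, and $(\mathrm{II})$ the two nonlinear terms involving $f_1,f_2$.

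For $(\mathrm I)$, Cauchy--Schwarz and Young's inequality with a small weight on $\|\Lambda^{s}\nabla a\|_{L^2}$ give, after summing in $s$ and using $\|\nabla\theta\|_{H^{r+3}}\le\|\theta\|_{H^{r+4}}$, $\|\nabla\b\|_{H^{r+3}}\le\|\b\|_{H^{r+4}}$, the bound $(\mathrm I)\le\tfrac{3}{16}\|\nabla a\|_{H^{r+3}}^2+C\|\theta\|_{H^{r+4}}^2+C\|\b\|_{H^{r+4}}^2$; the first piece is reserved for absorption on the left.

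For $(\mathrm{II})$, I bound $\|f_1\|_{H^{r+3}}$ and $\|f_2\|_{H^{r+3}}$ using the product rule (Lemma \ref{daishu}), the composition estimate \eqref{eq:smalla} (legitimate since $|a|,|\theta|\le\tfrac12$ under \eqref{shiping2}), the Poincar\'e bound $\|a\|_{H^{r+4}}\le C\|\nabla a\|_{H^{r+3}}$ (using that $a$ has mean zero), and $\|\u\|_{H^{r+4}}\le\|\u\|_{H^{N}}\le\delta$ (valid as $N\ge 4r+7$). Every term of $f_1,f_2$ is at least quadratic, so one factor is $O(\delta)$ by \eqref{shiping2}; the complementary factor is one of $\|\nabla a\|_{H^{r+3}}$, $\|\div\u\|_{H^{r+3}}$, $\|(\theta,\b)\|_{H^{r+4}}$, or a $W^{1,\infty}$-norm of $\u$. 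The first three kinds are disposed of by Young's inequality: an $O(\delta)\|\nabla a\|_{H^{r+3}}^2$ term goes to the left, an $O(\delta)\|\div\u\|_{H^{r+3}}^2$ term merges with $C\|\div\u\|_{H^{r+3}}^2$, and a $\delta\|\nabla a\|_{H^{r+3}}\|(\theta,\b)\|_{H^{r+4}}$ term splits as $\tfrac{1}{16}\|\nabla a\|_{H^{r+3}}^2+C\delta^2\|(\theta,\b)\|_{H^{r+4}}^2$, which is the $(1+\delta^2)$-term of \eqref{yiming1}. The $W^{1,\infty}$-norm of $\u$, arising most importantly from the transport term $\u\cdot\nabla\u$ in $f_2$, is where the Diophantine condition enters: by Lemma \ref{dir}/\eqref{poin3} (whose hypotheses hold here since $\int_{\T^3}\rho\u\,dx=0$ and $\rho$ is close to $1$), and because $r>2$, one has $\|\u\|_{W^{1,\infty}}\le C\|\u\|_{H^{3}}\le C\|\n\cdot\nabla\u\|_{H^{r+3}}$; hence $\|\u\cdot\nabla\u\|_{H^{r+3}}\le C\|\u\|_{W^{1,\infty}}\|\u\|_{H^{r+4}}\le C\delta\,\|\n\cdot\nabla\u\|_{H^{r+3}}$, and pairing with $\nabla a$ and applying Young's inequality with an uneven split gives $\tfrac{1}{16}\|\nabla a\|_{H^{r+3}}^2+C\delta^2\|\n\cdot\nabla\u\|_{H^{r+3}}^2$, exactly the last term on the right of \eqref{yiming1}. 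Combining $(\mathrm I)$, $(\mathrm{II})$ and $C\|\div\u\|_{H^{r+3}}^2$, and choosing $\delta$ small enough that the total coefficient of $\|\nabla a\|_{H^{r+3}}^2$ transferred to the right stays $\le\tfrac12$, proves \eqref{yiming1}.

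The step I expect to be the main obstacle is precisely the transport nonlinearity $\u\cdot\nabla\u$ in $f_2$ (together with its relatives $\b\cdot\nabla\b$ and $I(a)\,\n\cdot\nabla\b$): a direct bound of its $H^{r+3}$-norm costs one derivative too many, $\|\u\|_{H^{r+4}}$, and the inviscid $\u$-equation offers no dissipation to absorb it. The resolution --- and the only place the Diophantine assumption is used in this proposition --- is to assign the ``cheap'' low-order factor to $\u$ and convert it into $\|\n\cdot\nabla\u\|_{H^{r+3}}$ via Lemma \ref{dir}, while the single power of $\delta$ from \eqref{shiping2} sits on the high-order factor; this is what makes the $\delta^2$-coefficient in \eqref{yiming1} achievable rather than just $\delta$. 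The remaining care is purely bookkeeping: keeping track of every $\|\nabla a\|_{H^{r+3}}^2$ contribution generated by $(\mathrm I)$ and by the $f_1,f_2$ estimates so that, once $\delta$ is fixed, their sum is strictly less than $1$.
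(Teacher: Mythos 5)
Your proposal is correct and follows essentially the same route as the paper: the same cross term $\sum_{0\le s\le r+3}\langle\Lambda^{s}\u,\Lambda^{s}\nabla a\rangle$, the same substitution of the $a$- and $\u$-equations with integration by parts to produce $\|\Lambda^{s}\div\u\|_{L^2}^2$ and $-\langle\Lambda^{s}\div\u,\Lambda^{s}f_1\rangle$, and the same use of Lemma \ref{dir} to convert the low-order factor of $\u\cdot\nabla\u$ into $\|\n\cdot\nabla\u\|_{H^{r+3}}$ while the $\delta$ sits on the high-order factor. The absorption of the small multiples of $\|\nabla a\|_{H^{r+3}}^2$ (via the mean-zero Poincar\'e inequality for $a$) also matches the paper's argument.
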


\begin{proof}
It follows from the equation
$$\nabla a
=-\partial_t \u-\nabla \ta+{\n}\cdot\nabla \b-\nabla ({\n}\cdot\b)+f_2
$$
that
\begin{align}\label{yiming2}
\norm{{\Lambda^{s}}\nabla a}{L^2}^2
=&-{\big\langle{\Lambda^{s}}\partial_t \u,{\Lambda^{s}}((\nabla a))\big\rangle}-{\big\langle{\Lambda^{s}} \nabla\ta,{\Lambda^{s}}\nabla a\big\rangle}\nn\\
&+{\big\langle{\Lambda^{s}} (\n\cdot\nabla\b),{\Lambda^{s}}(\nabla
a)\big\rangle}-{\big\langle{\Lambda^{s}} (\n\nabla\b),{\Lambda^{s}}(\nabla
a)\big\rangle}\nn\\
&+{\big\langle{\Lambda^{s}} (f_2),{\Lambda^{s}}\nabla a\big\rangle}\nn\\
=&M_{1}+M_{2}+M_{3}+M_{4}+M_{5}.
\end{align}
$M_1$ can be rewritten as
\begin{align}\label{yiming3}
M_{1}=&-{\big\langle{\Lambda^{s}}\partial_t \u,{\Lambda^{s}}((\nabla
a))\big\rangle}\nn\\
=&-\frac{d}{dt}{\big\langle{\Lambda^{s}} \u,{\Lambda^{s}}\nabla a\big\rangle}-{\big\langle{\Lambda^{s}}\div\u,{\Lambda^{s}} \partial_t a\big\rangle}\nn\\
=&-\frac{d}{dt}{\big\langle{\Lambda^{s}} \u,{\Lambda^{s}}\nabla a\big\rangle}
+{\big\langle{\Lambda^{s}}\div\u,{\Lambda^{s}}\div\u \big\rangle}-{\big\langle{\Lambda^{s}}\div\u,{\Lambda^{s}} f_1\big\rangle}\nn\\
=& -\frac{d}{dt}{\big\langle{\Lambda^{s}} \u,{\Lambda^{s}}\nabla a\big\rangle}+\norm{{\Lambda^{s}}(\div\u)}{L^2}^2-{\big\langle{\Lambda^{s}}\div\u,{\Lambda^{s}} f_1\big\rangle}.
\end{align}
Recall that
$$
f_1{=}-\u\cdot\nabla a-a\div \u.
$$
We assume that $0\le s\le r+3$. The last term of $M_{1}$ in \eqref{yiming3} can
be bounded as
\begin{align}\label{yiming15}
	-{\big\langle{\Lambda^{s}}\div\u,{\Lambda^{s}} f_1\big\rangle}
	\le& \, (1+ \|a\|_{L^\infty}) \, \norm{\div\u}{H^{{r+3}}}^2 + C\,
	\norm{\u}{H^N}^2\norm{ a}{H^{r+4}}^2 \nn\\
	\le& \, C\norm{\div\u}{H^{{r+3}}}^2+C\delta^2\norm{a}{H^{r+4}}^2,
\end{align}
where we have used (\ref{shiping2}). By H\"{o}lder's inequality,
\begin{align}\label{yiming4}
M_{2}+M_{3}+M_{4}\le\frac1{16}\norm{\Lambda^{r+3}\nabla
a}{L^2}^2+C\norm{\nabla\ta}{H^{r+3}}^2+C\norm{\nabla\b}{H^{r+3}}^2
\end{align}
and
\begin{align}\label{yiming6}
	M_{5}=	{\big\langle{\Lambda^{s}} (f_2),{\Lambda^{s}}\nabla a\big\rangle}
	\le\frac1{16}\norm{{\Lambda^{s}}\nabla
	a}{L^2}^2+\norm{{\Lambda^{s}}f_2}{L^2}^2.
\end{align}
Recall that
\begin{align*}
f_2 := &-\u\cdot\nabla \u+\b\cdot\nabla\b+\b\nabla\b+I(a)\nabla a-\theta\nabla
J(a)
\nn\\
&-I(a)({\n}\cdot\nabla \b+\b\cdot\nabla\b-{\n}\nabla \b-\b\nabla\b).
\end{align*}
We now deal with the terms in $f_2$.
By Lemma \ref{daishu}, (\ref{shiping2}) and Lemma \ref{dir},
\begin{align}\label{yiming7}
\norm{\la^{s}  (\u\cdot\nabla \u)}{L^{2}}^2
\le& C(\norm{\u}{L^\infty}^2\norm{\nabla \u}{H^s}^2+\norm{\nabla \u}{L^\infty}^2\norm{ \u}{H^s}^2 )\nn\\
\le& C(\norm{\u}{H^2}^2\norm{ \u}{H^{s+1}}^2+\norm{\nabla \u}{H^2}^2\norm{ \u}{H^s}^2 )\nn\\
\le&C\norm{ \u}{H^{s+1}}^2\norm{\u}{H^3}^2
\nn\\
\le& C\delta^2\norm{{\n}\cdot\nabla \u}{H^{r+3}}^2.
\end{align}
Similarly,
\begin{align}\label{yiming8}
\norm{\la^{s}  (\b\cdot\nabla \b+\b\nabla\b)}{L^{2}}^2
\le& C(\norm{\b}{L^\infty}^2\norm{\nabla \b}{H^s}^2+\norm{\nabla \b}{L^\infty}^2\norm{ \b}{H^s}^2 )\nn\\
\le& C(\norm{\b}{H^2}^2\norm{ \b}{H^{s+1}}^2+\norm{\nabla \b}{H^2}^2\norm{ \b}{H^s}^2 )\nn\\
\le&C\norm{ \b}{H^{s+1}}^2\norm{\b}{H^3}^2
\nn\\
\le& C\delta^2\norm{\b}{H^{r+4}}^2,
\end{align}
\begin{align}\label{yiming9}
\norm{\la^{s}  (\ta\nabla J(a) )}{L^{2}}^2
\le& C(\norm{\ta}{L^\infty}^2\norm{\nabla J(a)}{H^s}^2+\norm{\nabla J(a)}{L^\infty}^2\norm{ \ta}{H^s}^2 )\nn\\
\le& C(\norm{\ta}{H^2}^2\norm{ a}{H^{s+1}}^2+\norm{a}{H^3}^2\norm{\ta}{H^s}^2 )\nn\\
\le&C\delta^2\norm{ \ta}{H^{r+4}}^2,
\end{align}
\begin{align}\label{yiming10}
\norm{\la^{s}  (I(a)\nabla a )}{L^{2}}^2
\le& C(\norm{I(a)}{L^\infty}^2\norm{\nabla a}{H^s}^2+\norm{\nabla a}{L^\infty}^2\norm{ I(a)}{H^s}^2 )\nn\\
\le& C\norm{a}{H^N}^2\norm{a}{H^{r+4}}^2 \nn\\
\le&C\delta^2\norm{a}{H^{r+4}}^2,
\end{align}
\begin{align}\label{yiming11}
\norm{\la^{s}  (I(a){\n}\nabla \b )}{L^{2}}^2
\le& C(\norm{I(a)}{L^\infty}^2\norm{{\n}\nabla \b}{H^s}^2+\norm{{\n}\nabla \b}{L^\infty}^2\norm{ I(a)}{H^s}^2 )\nn\\
\le& C(\norm{a}{H^3}^2\norm{ {\n}\nabla \b}{H^{s}}^2+\norm{\b}{H^3}^2\norm{
a}{H^s}^2 )\nn\\
\le& C\delta^2\norm{ \b}{H^{r+4}}^2
\end{align}
and
\begin{align}\label{yiming12}
&\norm{\la^{s}  (I(a)({\n}\cdot\nabla \b)}{L^{2}}^2+\norm{\la^{s}  (I(a)({\b}\cdot\nabla \b-{\b}\nabla \b)}{L^{2}}^2
 \le C\delta^2\|\b\|_{H^{r+4}}^2.
\end{align}
Making use of \eqref{yiming7} through \eqref{yiming12} gives
\begin{align}\label{yiming13}
\norm{\la^{s}  f_2}{L^{2}}^2\le& C\delta^2\norm{ (a,\ta,\b)}{H^{r+4}}^2+ C\delta^2 \norm{{\n}\cdot\nabla \u}{H^{r+3}}^2.
\end{align}
Combining \eqref{yiming2}, \eqref{yiming3}, \eqref{yiming15}, \eqref{yiming4},
\eqref{yiming6} and \eqref{yiming13} leads to \eqref{yiming1}. This completes
the proof of Proposition \ref{adis}.
\end{proof}

\vskip .3in
\section{ The dissipation of $\n\cdot\nabla \u$}
\label{sec:nudiss}

This section rigorously establishes the stabilizing effect of the background
magnetic field. The velocity equation satisfies the Euler equation which
involves no dissipation. This section proves a proposition that demonstrates
the dissipative effect of the velocity field.

\vskip .1in
Mathematically the interaction of the fluid and the magnetic field
near a background magnetic field generates a wave structure. For the sake of
simplicity, we consider the linearized system of $\u$ and $\b$,
\begin{align*}
	&\partial_t \u+ R\nabla a+ R\nabla \ta
	={\n}\cdot\nabla \b-\nabla ({\n}\cdot\b),\\
	&\partial_t \b-\sigma\Delta\b={\n}\cdot\nabla \u-\n\div\u.
\end{align*}
To get to the point, we further ignore $R\nabla a+ R\nabla \ta$. Then we obtain
the following degenerate wave equations
\begin{align*}
&		\partial_{tt} \u - \sigma \Delta \partial_t \u - ({\n}\cdot\nabla)^2 \u
=-\nabla((\n\otimes \n)\cdot \nabla \u) +\nabla\div \u- (\n\cdot
\nabla\div\u)\n, \\
&	\partial_{tt} \b - \sigma \Delta \partial_t \b - ({\n}\cdot\nabla)^2 \b
=-\nabla((\n\otimes \n)\cdot \nabla \b) +\n \Delta
({\n}\cdot\b).
\end{align*}
$\u$ and $\b$ share a very similar wave structure. In comparison with the
original equation of $u$, the wave equation contains two extra regularizing
terms. $- \sigma \Delta \p_t \u$ comes from the magnetic diffusion and $-
({\n}\cdot\nabla)^2 \u$ is due to the magnetic field. $- ({\n}\cdot\nabla)^2
\u$ allows us to control the derivative of $\u$ along the background magnetic
field. More precisely, we are able to prove the following proposition.

\begin{proposition}\label{shiping1}
Let ${N}\ge 4r+7$ with $r>2$. Assume the solution $(a(t), \u(t), \ta(t),
\b(t))$ to \eqref{m2} satisfies \eqref{shiping2}. Then
\begin{align}\label{shiping3}
&\norm{{\n}\cdot\nabla \u}{H^{r+3}}^2-\sum_{0\le s\le
r+3}\frac{d}{dt}\big\langle{\Lambda^s}\b,{\Lambda^s}({\n}\cdot\nabla
\u)\big\rangle\le C\norm{ \b}{H^{r+5}}^2+C\norm{
\ta}{H^{r+5}}^2+\varepsilon\norm{ \nabla a}{H^{r+3}}^2,
\end{align}
where $\varepsilon>0$ is a fixed small number.
\end{proposition}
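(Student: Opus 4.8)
The plan is to manufacture the dissipative norm $\|\n\cdot\nabla\u\|_{H^{r+3}}^2$ — which is unavailable directly because the velocity equation $\eqref{m2}_2$ carries no damping — out of the magnetic equation, whose right-hand side is exactly $\n\cdot\nabla\u-\n\div\u$. Writing $\n\cdot\nabla\u=\partial_t\b-\sigma\Delta\b+\n\div\u-f_4$ from $\eqref{m2}_4$, applying $\Lambda^s$ for $0\le s\le r+3$, pairing in $L^2$ with $\Lambda^s(\n\cdot\nabla\u)$, and peeling off the time derivative from the $\partial_t\b$ contribution through $\langle\Lambda^s\partial_t\b,\Lambda^s(\n\cdot\nabla\u)\rangle=\frac{d}{dt}\langle\Lambda^s\b,\Lambda^s(\n\cdot\nabla\u)\rangle-\langle\Lambda^s\b,\Lambda^s(\n\cdot\nabla\partial_t\u)\rangle$, one arrives at the identity
\begin{align*}
&\|\Lambda^s(\n\cdot\nabla\u)\|_{L^2}^2-\frac{d}{dt}\langle\Lambda^s\b,\Lambda^s(\n\cdot\nabla\u)\rangle=\langle\Lambda^s(\n\div\u),\Lambda^s(\n\cdot\nabla\u)\rangle\\
&\qquad-\sigma\langle\Lambda^s\Delta\b,\Lambda^s(\n\cdot\nabla\u)\rangle-\langle\Lambda^s f_4,\Lambda^s(\n\cdot\nabla\u)\rangle-\langle\Lambda^s\b,\Lambda^s(\n\cdot\nabla\partial_t\u)\rangle.
\end{align*}
It then remains to estimate the four terms on the right and sum over $s$.

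The last term is where the stabilizing cancellations live. Substituting $\partial_t\u=\n\cdot\nabla\b-\nabla(\n\cdot\b)-\nabla a-\nabla\ta+f_2$ from $\eqref{m2}_2$ and integrating by parts so that the skew-adjoint operator $\n\cdot\nabla$ falls on $\b$: the $\n\cdot\nabla\b$ contribution gives $-\langle\Lambda^s\b,\Lambda^s(\n\cdot\nabla)^2\b\rangle=\|\Lambda^s(\n\cdot\nabla\b)\|_{L^2}^2\le C\|\b\|_{H^{r+5}}^2$; the $\nabla(\n\cdot\b)$, $\nabla a$ and $\nabla\ta$ contributions all vanish after an integration by parts because $\div\b=0$; and the $f_2$-contribution is controlled by \eqref{yiming13} together with Lemma \ref{daishu} and \eqref{shiping2}. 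The magnetic-diffusion term is handled by Young's inequality, $\sigma|\langle\Lambda^s\Delta\b,\Lambda^s(\n\cdot\nabla\u)\rangle|\le\tfrac18\|\n\cdot\nabla\u\|_{H^{r+3}}^2+C\|\b\|_{H^{r+5}}^2$, and the $f_4$-term similarly, with $\|f_4\|_{H^{r+3}}^2\le C\delta^2\|\n\cdot\nabla\u\|_{H^{r+3}}^2+C\delta^2\|\b\|_{H^{r+5}}^2$. Throughout, the rule for the nonlinear terms is to put the highest derivatives on the factor that is $O(\delta)$ by \eqref{shiping2}, keeping at most three derivatives on any factor of $\u$ so that Lemma \ref{dir} converts it into $\|\n\cdot\nabla\u\|_{H^{r+3}}$, and to use the mean-zero property of $a$ (from \eqref{conser1}) to trade $\|a\|_{H^{r+4}}$ for $\|\nabla a\|_{H^{r+3}}$, so that $\delta^2\|a\|^2$-terms become $\le\varepsilon\|\nabla a\|_{H^{r+3}}^2$.

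The crux is the remaining term $\langle\Lambda^s(\n\div\u),\Lambda^s(\n\cdot\nabla\u)\rangle$: this is precisely the symptom of there being no dissipative norm of $\u$ or $\div\u$ on the right of \eqref{shiping3}, so it cannot be absorbed by brute force. I would integrate by parts twice to write it as $-\langle\Lambda^s(\n\cdot\nabla\div\u),\Lambda^s(\n\cdot\u)\rangle$ and then replace $\n\cdot\nabla\div\u=\n\cdot\nabla f_1-\partial_t(\n\cdot\nabla a)$ using $\n\cdot\nabla$ of the mass equation $\eqref{m2}_1$. This converts the term into an exact time derivative $\frac{d}{dt}\langle\Lambda^s(\n\cdot\nabla a),\Lambda^s(\n\cdot\u)\rangle$, plus $-\langle\Lambda^s(\n\cdot\nabla a),\Lambda^s\partial_t(\n\cdot\u)\rangle$, plus a nonlinear remainder; and since $\partial_t(\n\cdot\u)=-\n\cdot\nabla(a+\ta)+\n\cdot f_2$ carries neither $\b$ nor $\div\u$ (the $\n\otimes\n$-contractions coming from $\n\cdot\nabla\b$ and $\nabla(\n\cdot\b)$ coincide and cancel), the middle piece reduces to a combination of $\|\n\cdot\nabla a\|^2$, $\langle\n\cdot\nabla a,\n\cdot\nabla\ta\rangle$ and a cubic term, all controllable against $\varepsilon\|\nabla a\|_{H^{r+3}}^2+C\|\ta\|_{H^{r+5}}^2$ plus absorbable terms once the smallness $\delta$ and the diffusion gain are invoked. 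The extra corrector $\sum_s\frac{d}{dt}\langle\Lambda^s(\n\cdot\nabla a),\Lambda^s(\n\cdot\u)\rangle$ is of the same off-diagonal type as the one already exhibited in \eqref{shiping3} and is incorporated into the Lyapunov functional together with it.

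Finally, summing over $0\le s\le r+3$, absorbing the $\tfrac18$- and $\delta^2$-multiples of $\|\n\cdot\nabla\u\|_{H^{r+3}}^2$ into the left-hand side (legitimate for $\delta$ small), and keeping the cross term under the time derivative yields \eqref{shiping3}. The main obstacle is exactly the $\n\div\u$ coupling term: in the absence of any dissipation in the velocity equation it must be routed entirely through the mass equation and the identity $\partial_t(\n\cdot\u)=-\n\cdot\nabla(a+\ta)+\n\cdot f_2$, and the delicate point is verifying that the exact time derivatives generated along the way combine into admissible correctors and that the coefficient in front of $\|\nabla a\|_{H^{r+3}}^2$ can be kept small rather than $O(1)$.
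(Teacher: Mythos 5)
Your skeleton coincides with the paper's for three of the four pieces: you test the magnetic equation against $\Lambda^s(\n\cdot\nabla \u)$, peel off the corrector $\frac{d}{dt}\langle\Lambda^s\b,\Lambda^s(\n\cdot\nabla\u)\rangle$, substitute the velocity equation into $\langle\Lambda^s\b,\Lambda^s(\n\cdot\nabla\partial_t\u)\rangle$, and handle the diffusion term and $f_4$ by Young's inequality and \eqref{yiming13}. (Your observation that $\langle\Lambda^s(\n\cdot\nabla\b),\Lambda^s\nabla a\rangle$ and its siblings vanish identically because $\div\b=0$ is correct, and slightly cleaner than the paper, which bounds that term by Cauchy--Schwarz.)

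The gap is in your treatment of the coupling term $\Pi_3=\langle\Lambda^s(\n\div\u),\Lambda^s(\n\cdot\nabla\u)\rangle$, and it is fatal as written. Following your route, integration by parts gives $\Pi_3=-\langle\Lambda^s(\n\cdot\nabla\div\u),\Lambda^s(\n\cdot\u)\rangle$, and substituting the mass equation plus $\n\cdot\partial_t\u=-\n\cdot\nabla(a+\ta)+\n\cdot f_2$ produces, alongside your corrector, the term
\[
+\,\|\Lambda^s(\n\cdot\nabla a)\|_{L^2}^2
\]
on the right-hand side with coefficient exactly $1$. This is not ``controllable against $\varepsilon\|\nabla a\|_{H^{r+3}}^2$'': it is a sign-definite, full-size directional energy of $a$, linear in the perturbation (hence quadratic in the energy, so $\delta$-smallness is irrelevant), and there is no dissipation of $a$ anywhere in the system to absorb it --- indeed by Lemma \ref{dir} it even controls $\|a\|_{H^3}$ from below. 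The only source of $\|\nabla a\|_{H^{r+3}}^2$ on the good side of the whole scheme is Proposition \ref{adis}, where it appears with coefficient $1$; an $O(|\n|^2)$ copy of it on the bad side of \eqref{shiping3} therefore destroys the summation \eqref{xueying2}--\eqref{xueying20}--\eqref{shiping3}, which is precisely why the statement demands $\varepsilon$ rather than $C$. You flag this as the ``delicate point'' but offer no mechanism that actually shrinks the coefficient.

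The paper's resolution --- signalled in the remark following Theorem \ref{dingli} --- is to route $\div\u$ through the \emph{temperature} equation rather than the mass equation: writing $\n\div\u=\n(-\partial_t\ta+\kappa\Delta\ta+f_3)$, one gets a corrector $\frac{d}{dt}\langle\Lambda^s(\n\ta),\Lambda^s(\n\cdot\nabla\u)\rangle$ and, after substituting $\partial_t\u$, linear cross terms such as $\langle\Lambda^s(\n\cdot\nabla(\n\ta)),\Lambda^s\nabla a\rangle$. Because one factor is always $\ta$ (or $\b$), Young's inequality can dump the large constant onto $C\|\ta\|_{H^{r+5}}^2$ --- admissible, since $\ta$ carries its own dissipation $\kappa\Delta\ta$ --- leaving only $\varepsilon\|\nabla a\|_{H^{r+3}}^2$. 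Your mass-equation route instead pairs $a$ against $a$, which is exactly the pairing that cannot be made small.
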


\begin{proof}
Applying ${\Lambda^s}$ with $0\le s\le r+3$ to the fourth equation of
\eqref{m2}, multiplying by ${\Lambda^s}({\n}\cdot\nabla \u)$ and
then integrating over $\T^3$, we obtain
\begin{align}\label{shiping4}
\norm{{\Lambda^s}({\n}\cdot\nabla \u)}{L^2}^2
=&{\big\langle{\Lambda^s}\partial_t \b,{\Lambda^s}({\n}\cdot\nabla \u)\big\rangle}-{\big\langle{\Lambda^s} \Delta\b,{\Lambda^s}({\n}\cdot\nabla \u)\big\rangle}\nn\\
&+{\big\langle{\Lambda^s} (\n\div\u),{\Lambda^s}({\n}\cdot\nabla \u)\big\rangle}+{\big\langle{\Lambda^s} (f_4),{\Lambda^s}({\n}\cdot\nabla \u)\big\rangle}\nn\\
:=&\Pi_{1}+\Pi_{2}+\Pi_{3}+\Pi_{4}.
\end{align}
$I_1$ can be further written as
\begin{align*}
\Pi_{1}=&\big\langle{\Lambda^s}\partial_t \b,{\Lambda^s}({\n}\cdot\nabla
\u)\big\rangle\nn\\
=&\frac{d}{dt}\big\langle{\Lambda^s}\b,{\Lambda^s}({\n}\cdot\nabla \u)\big\rangle-\big\langle{\Lambda^s}\b,{\Lambda^s}({\n}\cdot\nabla \partial_t\u)\big\rangle\nn\\
=&\frac{d}{dt}\big\langle{\Lambda^s}\b,{\Lambda^s}({\n}\cdot\nabla \u)\big\rangle+\big\langle{\Lambda^s}({\n}\cdot\nabla\b),{\Lambda^s} \partial_t\u\big\rangle\nn\\
=&\frac{d}{dt}\big\langle{\Lambda^s}\b,{\Lambda^s}({\n}\cdot\nabla \u)\big\rangle-\big\langle{\Lambda^s}({\n}\cdot\nabla \b),{\Lambda^s}(\nabla a)\big\rangle\\
&-\big\langle{\Lambda^s}({\n}\cdot\nabla \b),{\Lambda^s}(\nabla \ta)\big\rangle+\big\langle{\Lambda^s}({\n}\cdot\nabla \b),{\Lambda^s}({\n}\cdot\nabla \b)\big\rangle\nn\\
&-\big\langle{\Lambda^s}({\n}\cdot\nabla \b),{\Lambda^s}(\nabla ({\n}\cdot\b))\big\rangle
+\big\langle{\Lambda^s}({\n}\cdot\nabla \b),{\Lambda^s}(f_2)\big\rangle
\nn\\
=&\Pi_{1}^{(1)}+\Pi_{1}^{(2)}+\Pi_{1}^{(3)}+\Pi_{1}^{(4)}+\Pi_{1}^{(5)}
+\Pi_{1}^{(6)}.
\end{align*}
For any fixed small number $\varepsilon>0$,
\begin{align*}
\Pi_{1}^{(2)}=&-\big\langle{\Lambda^s}({\n}\cdot\nabla \b),{\Lambda^s}(\nabla
a)\big\rangle\nn\\
\le& C\norm{{\Lambda^{s}}({\n}\cdot\nabla
\b)}{L^2}\norm{{\Lambda^{s+1}}a}{L^2}\nn\\
\le&\varepsilon\norm{\nabla a}{H^{s}}^2+C\norm{\nabla \b}{H^{s}}^2.
\end{align*}
By H\"{o}lder's inequality,
\begin{align*}
\Pi_{1}^{(3)}+\Pi_{1}^{(4)}+\Pi_{1}^{(5)}
\le&C\norm{\nabla \b}{H^s}^2+C\norm{\nabla \ta}{H^s}^2.
\end{align*}
As in the derivation of \eqref{yiming13}, we have
\begin{align*}
\Pi_{1}^{(6)}=&\big\langle{\Lambda^s}({\n}\cdot\nabla
\b),{\Lambda^s}(f_2)\big\rangle
\nn\\
\le& \varepsilon\norm{ \nabla a}{H^{r+3}}^2+ C\delta^2\norm{
(\ta,\b)}{H^{r+4}}^2+ C\delta^2 \norm{{\n}\cdot\nabla \u}{H^{r+3}}^2.
\end{align*}
By H\"{o}lder's inequality,
\begin{align*}
\Pi_2\le& C\norm{{\Lambda^s}\Delta\b}{L^2}\norm{{\Lambda^s}({\n}\cdot\nabla
\u)}{L^2} \le\frac{1}{16}\norm{{\Lambda^s}({\n}\cdot\nabla
\u)}{L^2}^2+C\norm{\b}{H^{r+5}}^2.
\end{align*}
According to the equation of $\theta$, namely
$\partial_t\theta-\kappa\Delta\ta+ \div \u=f_3$,
\begin{align*}
\Pi_{3}=&{\big\langle{\Lambda^s} (\n\div\u),{\Lambda^s}({\n}\cdot\nabla
\u)\big\rangle}\nn\\
=&
\big\langle{\Lambda^s}\n(-\partial_t\theta),{\Lambda^s}({\n}\cdot\nabla \u)\big\rangle
+ \kappa \big\langle{\Lambda^s}\n(\Delta\ta),{\Lambda^s}({\n}\cdot\nabla
\u)\big\rangle+\big\langle{\Lambda^s}\n(f_3),{\Lambda^s}({\n}\cdot\nabla
\u)\big\rangle
\nn\\
=&\Pi_{3,1}+\Pi_{3,2}+\Pi_{3,3}.
\end{align*}
By H\"{o}lder's inequality,
\begin{align*}
\Pi_{3,2}\le&
\,C\norm{{\Lambda^s}\Delta\ta}{L^2}\norm{{\Lambda^s}({\n}\cdot\nabla \u)}{L^2}
\nn\\
\le&\,\frac{1}{16}\norm{{\Lambda^s}({\n}\cdot\nabla
\u)}{L^2}^2+C\norm{\ta}{H^{r+5}}^2.
\end{align*}
$\Pi_{3,1}$ can be written as
\begin{align*}
\Pi_{3,1}=&\big\langle{\Lambda^s}\n(-\partial_t\theta),{\Lambda^s}({\n}\cdot\nabla
 \u)\big\rangle\nn\\
=&-\frac{d}{dt}\big\langle{\Lambda^s}(\n\ta),{\Lambda^s}({\n}\cdot\nabla \u)\big\rangle+\big\langle{\Lambda^s}(\n\ta),{\Lambda^s}({\n}\cdot\nabla \partial_t\u)\big\rangle\nn\\
=&-\frac{d}{dt}\big\langle{\Lambda^s}(\n\ta),{\Lambda^s}({\n}\cdot\nabla \u)\big\rangle+\big\langle{\Lambda^s}({\n}\cdot\nabla(\n\ta)),{\Lambda^s} \partial_t\u\big\rangle
\nn\\
=&\Pi_{3,1}^{(1)}+\big\langle{\Lambda^s}({\n}\cdot\nabla(\n\ta)),{\Lambda^s}
\partial_t\u\big\rangle.
\end{align*}
Invoking the equation of $\u$ leads to
\begin{align*}
\big\langle{\Lambda^s}({\n}\cdot\nabla(\n\ta)),{\Lambda^s} \partial_t\u\big\rangle
=&-\big\langle{\Lambda^s}({\n}\cdot\nabla(\n\ta)),{\Lambda^s}(\nabla a)\big\rangle\\
&-\big\langle{\Lambda^s}({\n}\cdot\nabla(\n\ta)),{\Lambda^s}(\nabla \ta)\big\rangle+\big\langle{\Lambda^s}({\n}\cdot\nabla(\n\ta)),{\Lambda^s}({\n}\cdot\nabla \b)\big\rangle\nn\\
&-\big\langle{\Lambda^s}({\n}\cdot\nabla(\n\ta)),{\Lambda^s}(\nabla ({\n}\cdot\b))\big\rangle
+\big\langle{\Lambda^s}({\n}\cdot\nabla(\n\ta)),{\Lambda^s}(f_2)\big\rangle\nn\\
=&\Pi_{3,1}^{(2)}+\Pi_{3,1}^{(3)}+\Pi_{3,1}^{(4)}+\Pi_{3,1}^{(5)}+\Pi_{3,1}^{(6)}.
\end{align*}
By H\"{o}lder's inequality,
\begin{align*}
\Pi_{3,1}^{(2)}=&-\big\langle{\Lambda^s}({\n}\cdot\nabla(\n\ta)),{\Lambda^s}(\nabla
 a)\big\rangle \le\,
C\norm{{\Lambda^{s}}({\n}\cdot\nabla(\n\ta))}{L^2}\norm{{\Lambda^s}\nabla
a}{L^2}\nn\\
\le& \, \varepsilon\norm{\nabla a}{H^{r+3}}^2 + \,C\norm{\ta}{H^{r+4}}^2
\end{align*}
and
\begin{align*}
\Pi_{3,1}^{(3)}+\Pi_{3,1}^{(4)}+\Pi_{3,1}^{(5)}
\le&C\norm{\nabla \b}{H^{r+3}}^2+C\norm{\nabla \ta}{H^{r+3}}^2.
\end{align*}
As in the derivation of \eqref{yiming13}, we have
\begin{align*}
\Pi_{3,1}^{(6)}=&\big\langle{\Lambda^s}({\n}\cdot\nabla(\n\ta)),{\Lambda^s}(f_2)\big\rangle\nn\\
\le&\, \varepsilon\norm{ a}{H^{r+4}}^2+ C\delta^2\norm{ (\ta,\b)}{H^{r+4}}^2+
C\delta^2 \norm{{\n}\cdot\nabla \u}{H^{r+3}}^2.
\end{align*}
Recalling that $f_4 =-\u\cdot\nabla\b+\b\cdot\nabla\u-\b\div\u$, the last term
in (\ref{shiping4}) can be written as
\begin{align*}
\Pi_{4}=&{\big\langle{\Lambda^s} (f_4),{\Lambda^s}({\n}\cdot\nabla
\u)\big\rangle}\nn\\
=&-{\big\langle{\Lambda^s} (\u\cdot\nabla\b),{\Lambda^s}({\n}\cdot\nabla \u)\big\rangle}
+{\big\langle{\Lambda^s} (\b\cdot\nabla\u-\b\div\u),{\Lambda^s}({\n}\cdot\nabla \u)\big\rangle}
\nn\\
=&\Pi_{4,1}+\Pi_{4,2}.
\end{align*}
By H\"{o}lder's inequality
\begin{align*}
\Pi_{4,1}\le& \,C\norm{{\Lambda^s}(\u\cdot\nabla
\b)}{L^2}\norm{{\Lambda^s}({\n}\cdot\nabla \u)}{L^2}\nn\\
\le& \,C(\norm{\u}{L^\infty}\norm{\nabla \b}{H^s}+\norm{\nabla
\b}{L^\infty}\norm{ \u}{H^s} )\norm{{\Lambda^s}({\n}\cdot\nabla \u)}{L^2}\nn\\
\le&\frac{1}{16}\norm{{\Lambda^s}({\n}\cdot\nabla \u)}{L^2}^2+C(\norm{\u}{H^2}^2\norm{\nabla \b}{H^s}^2+\norm{\nabla \b}{H^2}^2\norm{ \u}{H^s}^2 )\nn\\
\le&\frac{1}{16}\norm{{\Lambda^s}({\n}\cdot\nabla \u)}{L^2}^2+C\norm{\u}{H^N}^2\norm{ \b}{H^{s+1}}^2\nn\\
\le&\frac{1}{16}\norm{{\Lambda^s}({\n}\cdot\nabla \u)}{L^2}^2+C\delta^2\norm{ \b}{H^{r+4}}^2,
\end{align*}
and
\begin{align*}
\Pi_{4,2}\le& C\norm{{\Lambda^s}(\b\cdot\nabla \u-\b\div
\u)}{L^2}\norm{{\Lambda^s}({\n}\cdot\nabla \u)}{L^2}\nn\\
\le& C(\norm{\b}{L^\infty}\norm{\nabla \u}{H^s}+\norm{\nabla \u}{L^\infty}\norm{ \b}{H^s} )\norm{{\Lambda^s}({\n}\cdot\nabla \u)}{L^2}\nn\\
\le& C(\norm{\b}{H^2}\norm{ \u}{H^{s+1}}+\norm{\nabla \u}{H^2}\norm{ \b}{H^s} )\norm{{\Lambda^s}({\n}\cdot\nabla \u)}{L^2}\nn\\
\le&\frac{1}{16}\norm{{\Lambda^s}({\n}\cdot\nabla \u)}{L^2}^2+C\norm{\u}{H^N}^2(\norm{ \b}{H^{s}}^2+\norm{ \b}{H^{2}}^2)\nn\\
\le&\frac{1}{16}\norm{{\Lambda^s}({\n}\cdot\nabla \u)}{L^2}^2+C\delta^2\norm{ \b}{H^{r+3}}^2.
\end{align*}
Inserting the bounds above in  (\ref{shiping4}) yields the desired inequality.
This completes the proof of Proposition \ref{shiping1}.
\end{proof}

\vskip .3in
\section{The dissipation of $\div\u$}
\label{sec:dudiss}

This section exploits the dissipative effect of $\div\u$. We explore the
interaction between $\div\u$ and $\theta$. The linearized system of $\div\u$
and $\theta$ is given by
\begin{align*}
	&\partial_t \div\u+ R\Delta a+ R\Delta \ta
	=-\Delta ({\n}\cdot\b),\\
	&\partial_t\theta-\kappa\Delta\ta+ \div \u=0.
\end{align*}
For simplicity, we ignore $R\Delta a$ and $-\Delta ({\n}\cdot\b)$. It is very
easy to derive that
	\begin{align*}
	&\partial_{tt} \div\u - \kappa \Delta \p_t  \div\u  - R\Delta
	\div\u=0,\\
	&\partial_{tt}\theta-\kappa\Delta\p_t- R\Delta \ta =0
\end{align*}
$\div\u$ and $\theta$ satisfies the same wave equation. The wave structure
reveals the dissipative nature of  $\div\u$. Making use of this structure,
we can prove the following proposition.

\begin{proposition} \label{ww}
	Let ${N}\ge 4r+7$ with $r>2$. Assume the solution $(a(t), \u(t), \ta(t),
	\b(t))$ to \eqref{m2} satisfies \eqref{shiping2}. Then
\begin{align*}
	&\norm{\div\u}{H^{{r+3}}}^2+\sum_{0\le s\le r+3}\frac{d}{dt}{\big\langle{\Lambda^{s}} \ta,{\Lambda^{s}}\div\u\big\rangle}
	\nn\\
	&\quad\le(\varepsilon+\delta^2)\norm{ \nabla a}{H^{r+3}}^2+C\norm{ \ta}{H^{r+5}}^2+C\delta^2\norm{ (\ta,\b)}{H^{r+4}}^2+ C\delta^2 \norm{{\n}\cdot\nabla \u}{H^{r+3}}^2
\end{align*}
where $\varepsilon>0$ is a fixed small number and the constant $C$ depends on
$\varepsilon>0$.
\end{proposition}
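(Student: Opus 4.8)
The plan is to exploit the $\div\u$--$\ta$ coupling in the same spirit as Propositions~\ref{adis} and \ref{shiping1}. From the temperature equation $\eqref{m2}_3$ one reads off $\div\u=-\partial_t\ta+\kappa\Delta\ta+f_3$, so for each $0\le s\le r+3$, applying $\la^s$ and pairing with $\la^s\div\u$ gives $\norm{\la^s\div\u}{L^2}^2=-\langle\la^s\partial_t\ta,\la^s\div\u\rangle+\kappa\langle\la^s\Delta\ta,\la^s\div\u\rangle+\langle\la^s f_3,\la^s\div\u\rangle$. The first term I would rewrite as $-\frac{d}{dt}\langle\la^s\ta,\la^s\div\u\rangle+\langle\la^s\ta,\la^s\partial_t\div\u\rangle$, which isolates exactly the Lyapunov correction on the left of the proposition; after summing over $s$ the task reduces to estimating $\langle\la^s\ta,\la^s\partial_t\div\u\rangle$ and the two source terms.

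For the main term I would integrate by parts in space, $\langle\la^s\ta,\la^s\partial_t\div\u\rangle=-\langle\la^s\nabla\ta,\la^s\partial_t\u\rangle$, and then substitute the momentum equation $\partial_t\u=-R\nabla a-R\nabla\ta+\n\cdot\nabla\b-\nabla(\n\cdot\b)+f_2$; doing this \emph{before} taking $\div$ is what keeps the nonlinearity as $f_2$ rather than $\div f_2$, avoiding a derivative loss. The $\nabla a$ piece splits by Young's inequality into $\varepsilon\norm{\nabla a}{H^{r+3}}^2+C_\varepsilon\norm{\ta}{H^{r+4}}^2$, and the $\nabla\ta$ self-pairing is $\le C\norm{\ta}{H^{r+5}}^2$. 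The linearized Lorentz terms are handled using $\div\b=0$: one more integration by parts gives $\langle\la^s\nabla\ta,\la^s(\n\cdot\nabla\b)\rangle=-\langle\la^s\ta,\la^s(\n\cdot\nabla\div\b)\rangle=0$, and since $\nabla(\n\cdot\b)-\n\cdot\nabla\b=\n\times(\nabla\times\b)$ the surviving piece is $\langle\la^s\nabla\ta,\la^s(\n\times(\nabla\times\b))\rangle$, controlled by $C\norm{\ta}{H^{r+5}}^2$ together with a $\b$-term that is absorbed by the magnetic dissipation $\sigma\norm{\nabla\b}{H^{\ell}}^2$ of Proposition~\ref{high2}. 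Finally $-\langle\la^s\nabla\ta,\la^s f_2\rangle\le C\norm{\ta}{H^{r+5}}^2+C\norm{\la^s f_2}{L^2}^2$, where $\norm{\la^s f_2}{L^2}^2$ is precisely the quantity bounded in the derivation of \eqref{yiming13}, yielding $C\delta^2\norm{(a,\ta,\b)}{H^{r+4}}^2+C\delta^2\norm{\n\cdot\nabla\u}{H^{r+3}}^2$; the $\norm{a}{H^{r+4}}$ contribution is rewritten as $\norm{\nabla a}{H^{r+3}}$ since $a$ has zero mean.

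It remains to treat the two source terms: $\kappa\langle\la^s\Delta\ta,\la^s\div\u\rangle\le\varepsilon\norm{\div\u}{H^{r+3}}^2+C_\varepsilon\norm{\ta}{H^{r+5}}^2$, and $\langle\la^s f_3,\la^s\div\u\rangle\le\varepsilon\norm{\div\u}{H^{r+3}}^2+C_\varepsilon\norm{f_3}{H^{r+3}}^2$, with $\norm{f_3}{H^{r+3}}^2$ estimated via Lemmas~\ref{daishu}--\ref{fuhe} and the bootstrap bound \eqref{shiping2}; the $\div(\ta\u)$ part again produces a $\delta$-multiple of $\norm{\div\u}{H^{r+3}}$ (absorbed) together with $\delta$-multiples of higher Sobolev norms of $\u$, which are converted back to $\norm{\n\cdot\nabla\u}{H^{r+3}}$ by combining the Poincar\'e-type inequality \eqref{poin3} with interpolation against $\norm{\u}{H^N}\le\delta$ and then Young's inequality. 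Summing over $0\le s\le r+3$ and absorbing the small multiples of $\norm{\div\u}{H^{r+3}}^2$ into the left-hand side gives the claim. \textbf{The main obstacle} is the bookkeeping of the borderline nonlinear terms — those that would carry $r+4$ derivatives on $\u$ — which must be tamed either through the substitution $\div\u=-(1+a)^{-1}(\partial_t a+\u\cdot\nabla a)$ (as in the proof of Proposition~\ref{high2}) or through a commutator split, and then funnelled back to $\norm{\n\cdot\nabla\u}{H^{r+3}}$ via \eqref{poin3} at the cost of a factor $\delta$, while making sure every magnetic contribution is dominated by the dissipation rather than left as an uncontrolled Sobolev norm of $\b$.
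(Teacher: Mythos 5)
Your proposal follows essentially the same route as the paper: the same Lyapunov correction $\sum_{s}\frac{d}{dt}\big\langle\Lambda^{s}\ta,\Lambda^{s}\div\u\big\rangle$ extracted from $-\langle\Lambda^{s}\partial_t\ta,\Lambda^{s}\div\u\rangle$, the same substitution of the momentum equation (the paper inserts $\partial_t\div\u=-\Delta\ta-\Delta a-\Delta(\n\cdot\b)+\div f_2$ directly, which is exactly your step after one integration by parts), the same reuse of the $f_2$ bound from \eqref{yiming13} together with Poincar\'e on the mean-zero $a$, and the same treatment of $K_2=\langle\Lambda^s\Delta\ta,\Lambda^s\div\u\rangle$ and of $f_3$. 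The only point worth flagging is that the linear term $\Delta(\n\cdot\b)$ unavoidably leaves a $C\norm{\b}{H^{r+4}}^2$ contribution carrying no factor of $\delta$ --- the paper's own proof produces it as well (as $C\norm{\nabla\b}{H^{r+3}}^2$), even though the stated inequality omits it --- and, as you correctly indicate, it can only be absorbed later in the final assembly against $\gamma\sigma\norm{\nabla\b}{H^{r+4}}^2$, not within this proposition.
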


\begin{proof}
It follows from the equation
\begin{align*}
&\partial_t\theta-\Delta\ta+ \div \u=f_3
\end{align*}
that
\begin{align*}
\norm{{\Lambda^{s}}(\div\u)}{L^2}^2
=&-{\big\langle{\Lambda^{s}}\partial_t \ta,{\Lambda^{s}}\div\u\big\rangle}+{\big\langle{\Lambda^{s}} \Delta\ta,{\Lambda^{s}}\div\u\big\rangle}+{\big\langle{\Lambda^{s}} f_3,{\Lambda^{s}}\div\u\big\rangle}
\nn\\
:=&K_{1}+K_{2}+K_{3}.
\end{align*}
To estimate $K_1$, we rewrite it as
\begin{align*}
K_{1}=&-{\big\langle{\Lambda^{s}}\partial_t \ta,{\Lambda^{s}}\div\u\big\rangle}\nn\\
=&-\frac{d}{dt}{\big\langle{\Lambda^{s}}
\ta,{\Lambda^{s}}\div\u\big\rangle}+{\big\langle{\Lambda^{s}}\ta,{\Lambda^{s}}
\partial_t \div\u\big\rangle}\nn\\
:=&K_{1,1}+{\big\langle{\Lambda^{s}}\ta,{\Lambda^{s}} \partial_t
\div\u\big\rangle}.
\end{align*}
According to the equation of $\u$,
\begin{align*}
\partial_t\div\u=-\Delta\ta-\Delta a-\Delta(\n\cdot\b)+\div f_2.
\end{align*}
Therefore,
\begin{align*}
{\big\langle{\Lambda^{s}}\ta,{\Lambda^{s}} \partial_t \div\u\big\rangle}
=&-{\big\langle{\Lambda^{s}}\ta,{\Lambda^{s}} \Delta\ta\big\rangle}
-{\big\langle{\Lambda^{s}}\ta,{\Lambda^{s}} \Delta a\big\rangle}\nn\\
&-{\big\langle{\Lambda^{s}}\ta,{\Lambda^{s}} \Delta(\n\cdot\b)\big\rangle}
-{\big\langle{\Lambda^{s}}\ta,{\Lambda^{s}}\div f_2\big\rangle}\nn\\
:=&K_{1,2}+K_{1,3}+K_{1,4}+K_{1,5}.
\end{align*}
By H\"{o}lder's inequality,
\begin{align*}
K_{1,2}+K_{1,3}+K_{1,4}\,
\le&\varepsilon\norm{\nabla a}{H^{r+3}}^2+C\norm{\nabla
\ta}{H^{r+3}}^2+C\norm{\nabla \b}{H^{r+3}}^2, \\
K_{1,5}
=&
-{\big\langle{\Lambda^{s}}\ta,{\Lambda^{s}}\div f_2\big\rangle}\le
\varepsilon\norm{\nabla \ta}{H^{r+3}}^2+C\norm{{\Lambda^{s}}
f_2}{L^{2}}^2.
\end{align*}
Recall that
\begin{align*}
f_2{:=}&-\u\cdot\nabla \u+\b\cdot\nabla\b+\b\nabla\b+I(a)\nabla a-\theta\nabla
J(a)
\nn\\
&-I(a)({\n}\cdot\nabla \b+\b\cdot\nabla\b-{\n}\nabla \b-\b\nabla\b).
\end{align*}
It is not difficult to check that
\begin{align*}
\norm{\la^{s}  f_2}{L^{2}}^2\le& C\delta^2\norm{ (a,\ta,\b)}{H^{r+4}}^2+ C\delta^2 \norm{{\n}\cdot\nabla \u}{H^{r+3}}^2.
\end{align*}
That is,
\begin{align*}
K_{1,5}
\le&\varepsilon\norm{\nabla \ta}{H^{r+3}}^2+C\delta^2\norm{ (a,\ta,\b)}{H^{r+4}}^2+ C\delta^2 \norm{{\n}\cdot\nabla \u}{H^{r+3}}^2.
\end{align*}
For $0\le s\le r+3$,
\begin{align*}
K_{2}\le& C\norm{{\Lambda^s}\Delta\ta}{L^2}\norm{{\Lambda^s}\div\u}{L^2} \le
\frac{1}{16}\norm{\div\u}{H^{{r+3}}}^2+C\norm{\nabla \ta}{H^{r+4}}^2.
\end{align*}
To bound $K_3$, we recall that
\begin{align*}
f_3\stackrel{\mathrm{def}}{=}- \div(\theta \u)-\kappa
I(a)\Delta\ta+\frac{|\nabla\times\b|^2}{1+a}.
\end{align*}
Therefore,
\begin{align*}
K_{3}=&{\big\langle{\Lambda^{s}} f_3,{\Lambda^{s}}\div\u\big\rangle}\nn\\
\le&\frac{1}{16}
\|\div\u\|_{H^{r+3}}^2 +C\norm{\u}{H^{N}}^2\norm{\ta}{H^{r+4}}^2\nn\\
&+C\norm{a}{H^{N}}^2 \norm{\ta}{H^{r+5}}^2
+C(1+\norm{a}{H^{N}}^2)\norm{\b}{H^{N}}^2\norm{\b}{H^{r+4}}^2\nn\\
\le&\frac{1}{16} \|\div\u\|_{H^{r+3}}^2
+C\delta^2\norm{\ta}{H^{r+5}}^2+C(1+\delta^2)\delta^2\norm{\ta}{H^{r+4}}^2.
\end{align*}
This completes the proof of Proposition \ref{ww}.
\end{proof}

\vskip .3in
\section{The proof of Theorem \ref{dingli}}
\label{sec:pro}

This section completes the proof of Theorem \ref{dingli}.

\begin{proof}[Proof of Theorem \ref{dingli}] The framework of the proof is the
bootstrapping argument. First of all, the MHD system in \eqref{m2} with any
initial data $(a_0, \u_0, \ta_0, \b_0) \in H^N$ has a unique local solution.
This follows from a standard contraction mapping argument (see, e.g.,
\cite{MM}). The
bootstrapping argument is employed to prove the global existence and stability.
It starts with the ansatz that the solution $(a(t), \u(t), \ta(t),
\b(t))$ to \eqref{m2} satisfies
\begin{align*}
	\sup_{t\in[0,T]}\norm{(a(t), \u(t), \ta(t), \b(t))}{H^{N}}\le \delta
\end{align*}
for some $0<\delta<1$. We then show that
\begin{align}
	\sup_{t\in[0,T]}\norm{(a(t), \u(t), \ta(t), \b(t))}{H^{N}}\le
	\frac{\delta}{2}. \label{bb}
\end{align}	
We collect the estimates obtained in the previous sections:	
\begin{align}\label{xueying1}
&\frac12\frac{d}{dt}\left(\norm{(a,\u,\ta,\b)}{H^{\ell}}^2
+\int_{\T^3}\frac{\ta+a^2}{(1+a)^2}(\la^{\ell}a)^2 \,dx
\right)
+\kappa\norm{\nabla\ta}{H^{{\ell}}}^2
	+\sigma\norm{\nabla\b}{H^{{\ell}}}^2\nn\\
&\qquad\le C
Y_{\infty}(t)\norm{(a,\u,\ta,\b)}{H^{\ell}}^2,
\end{align}
\begin{align}\label{xueying2}
	&\norm{\nabla a}{H^{{r+3}}}^2+\sum_{0\le s\le r+3}\frac{d}{dt}{\big\langle{\Lambda^{s}} \u,{\Lambda^{s}}\nabla a\big\rangle}\nn\\
	&\quad\le C\norm{\div\u}{H^{{r+3}}}^2+
C(1+\delta^2)\norm{ (\ta,\b)}{H^{r+4}}^2+ C\delta^2 \norm{{\n}\cdot\nabla
\u}{H^{r+3}}^2
\end{align}
and
\begin{align}\label{xueying20}
	&\norm{\div\u}{H^{{r+3}}}^2+\sum_{0\le s\le r+3}\frac{d}{dt}{\big\langle{\Lambda^{s}} \ta,{\Lambda^{s}}\div\u\big\rangle}
	\nn\\
	&\quad\le(\varepsilon+\delta^2)\norm{ \nabla a}{H^{r+3}}^2+C\norm{
	\ta}{H^{r+5}}^2+C\delta^2\norm{ (\ta,\b)}{H^{r+4}}^2+ C\delta^2
	\norm{{\n}\cdot\nabla \u}{H^{r+3}}^2.
\end{align}
Adding \eqref{xueying2} and \eqref{xueying20}, and choosing
$\varepsilon,\delta$
small enough, we have
\begin{align}\label{xueying3}
	&\norm{\nabla a}{H^{{r+3}}}^2+\norm{\div\u}{H^{{r+3}}}^2+\sum_{0\le s\le r+3}\frac{d}{dt}\left({\big\langle{\Lambda^{s}} \u,{\Lambda^{s}}\nabla a\big\rangle}+{\big\langle{\Lambda^{s}} \ta,{\Lambda^{s}}\div\u\big\rangle}\right)\nn\\
	&\quad\le C\norm{ (\ta,\b)}{H^{r+5}}^2+ C\delta^2 \norm{{\n}\cdot\nabla \u}{H^{r+3}}^2.
\end{align}
Summing up \eqref{xueying3} and \eqref{shiping3} and choosing $\delta$small
enough, we obtain
\begin{align}\label{xueying5}
	&\norm{\nabla a}{H^{{r+3}}}^2+\norm{\div\u}{H^{{r+3}}}^2
+\norm{{\n}\cdot\nabla \u}{H^{r+3}}^2\nn\\
&\qquad+\sum_{0\le s\le r+3}\frac{d}{dt}\left({\big\langle{\Lambda^{s}} \u,{\Lambda^{s}}\nabla a\big\rangle}+{\big\langle{\Lambda^{s}} \ta,{\Lambda^{s}}\div\u\big\rangle}\right)-\sum_{0\le s\le r+3}\frac{d}{dt}\big\langle{\Lambda^s}\b,{\Lambda^s}({\n}\cdot\nabla \u)\big\rangle\nn\\
	&\quad\le C\norm{ (\ta,\b)}{H^{r+5}}^2.
\end{align}
Taking $\ell=r+4$ in \eqref{xueying1} yields
\begin{align}\label{xueying6}
&\frac12\frac{d}{dt}\left(\norm{(a,\u,\ta,\b)}{H^{r+4}}^2
+\int_{\T^3}\frac{\ta+a^2}{(1+a)^2}(\la^{r+4}a)^2 \,dx
\right)
+\kappa\norm{\nabla\ta}{H^{{r+4}}}^2
	+\sigma\norm{\nabla\b}{H^{{r+4}}}^2\nn\\
&\qquad\le C
Y_{\infty}(t)\norm{(a,\u,\ta,\b)}{H^{r+4}}^2.
\end{align}
Multiplying \eqref{xueying6} by a suitable large constant $\gamma$ and
adding to \eqref{xueying5} give rise to
\begin{align}\label{xueying7}
&\frac12\frac{d}{dt}\Bigg(\gamma\norm{(a,\u,\ta,\b)}{H^{r+4}}^2
+\gamma\int_{\T^3}\frac{\ta+a^2}{(1+a)^2}(\la^{s}a)^2 \,dx
\nn\\
&\qquad+\sum_{0\le s\le r+3}\left({\big\langle{\Lambda^{s}} \u,{\Lambda^{s}}\nabla a\big\rangle}+{\big\langle{\Lambda^{s}} \ta,{\Lambda^{s}}\div\u\big\rangle}-\big\langle{\Lambda^s}\b,{\Lambda^s}({\n}\cdot\nabla \u)\big\rangle\right)\Bigg)\nn\\
&\qquad+\gamma\kappa\norm{\nabla\ta}{H^{{r+4}}}^2
	+\gamma\sigma\norm{\nabla\b}{H^{{r+4}}}^2+\norm{\nabla a}{H^{{r+3}}}^2+\norm{\div\u}{H^{{r+3}}}^2
+\norm{{\n}\cdot\nabla \u}{H^{r+3}}^2\nn\\
&\quad\le C\gamma
Y_{\infty}(t)\norm{(a,\u,\ta,\b)}{H^{r+4}}^2.
\end{align}
Recall the definition of $Y_{\infty}$ in (\ref{ed3+1}),
\begin{align*}
	Y_{\infty}(t)\stackrel{\mathrm{def}}{=}&
	\|(a,\u,\ta,\b)\|_{L^{\infty}}+(1+\|a\|_{L^{\infty}}^2)\|(a,\u,\ta,\b)\|_{L^{\infty}}^2
	+(1+
	\norm{a}{L^\infty})\norm{(\nabla a,\nabla\u,\nabla
	\ta,\nabla\b)}{L^\infty}\nn\\
	&+\norm{\Delta
	\ta}{L^\infty}+(1+\|(a,\u,\b)\|_{L^{\infty}}^2+\|\nabla\u\|_{L^{\infty}}^2)\norm{(\nabla
	 a,\nabla\u,\nabla \ta,\nabla\b)}{L^\infty}^2.
\end{align*}
That is, $Y_{\infty}$ essentially contains $\|(a,\u,\ta,\b)\|_{L^{\infty}}$,
$\norm{(\nabla a,\nabla\u,\nabla
	\ta,\nabla\b)}{L^\infty}$, $\norm{\Delta \ta}{L^\infty}$ or their squares.
	Without loss of generality, we estimate some of them.
	The other terms can be bounded similarly.
For any ${{N}}\ge 2r+5$, according to \eqref{mor},
\begin{align*}
\norm{ \u}{H^{3}}^2\le& \,C\norm{{\n}\cdot\nabla \u}{H^{r+3}}^2,\quad
\norm{ \u}{H^{r+4}}^2\le\norm{ \u}{H^{3}}\norm{ \u}{H^{{{N}}}}\le \,
C\delta\norm{{\n}\cdot\nabla \u}{H^{r+3}}.
\end{align*}
Therefore,
\begin{align}\label{xueying9}
\gamma\norm{\nabla \b}{L^\infty}\norm{\u}{H^{r+4}}^2
\le& C\gamma\norm{\nabla \b}{H^{r+3}}\norm{\u}{H^{r+4}}^2\nn\\
\le& \frac {\gamma\sigma}{2}\norm{ \nabla\b}{H^{r+3}}^2+C\norm{\u}{H^{r+4}}^4\nn\\
\le& \frac {\gamma\sigma}{2}\norm{\nabla
\b}{H^{r+3}}^2+C\delta^2\norm{{\n}\cdot\nabla \u}{H^{r+3}}^2,
\end{align}
\begin{align}\label{xueying10}
\gamma\norm{\nabla \u}{L^\infty}\norm{\u}{H^{r+4}}^2
\le& \, C \gamma\norm{\nabla \u}{H^{2}}\norm{\u}{H^{r+4}}^2\nn\\
\le& \,C  \gamma\norm{{\n}\cdot\nabla \u}{H^{r+3}}
\delta\,\norm{{\n}\cdot\nabla \u}{H^{r+3}}
\nn\\
\le& \,C\delta\norm{{\n}\cdot\nabla \u}{H^{r+3}}^2,
\end{align}
\begin{align}\label{xueying11}
\gamma\norm{\nabla a}{L^\infty}\norm{\u}{H^{r+4}}^2
\le& C\gamma\norm{\nabla a}{H^{r+3}}\norm{\u}{H^{r+4}}^2\nn\\
\le& \frac14 \norm{\nabla a}{H^{r+3}} + C\gamma^2 \,\norm{\u}{H^{r+4}}^4\nn\\
\le&  \frac14 \norm{\nabla a}{H^{r+3}} +\,C\gamma^2
\,\delta^2\norm{{\n}\cdot\nabla
\u}{H^{r+3}}^2
\end{align}
and
\begin{align}\label{xueying110}
	\gamma\norm{\Delta\ta}{L^\infty}\norm{\u}{H^{r+4}}^2
	\le& C\gamma\norm{\nabla a}{H^{r+3}}\norm{\u}{H^{r+4}}^2\nn\\
	\le& \frac14 \norm{\nabla a}{H^{r+3}} + C\gamma^2
	\,\norm{\u}{H^{r+4}}^4\nn\\
	\le&  \frac14 \norm{\nabla a}{H^{r+3}} +\,C\gamma^2
	\,\delta^2\norm{{\n}\cdot\nabla
		\u}{H^{r+3}}^2.
\end{align}
For notational convenience, we set
\begin{align*}
{\mathcal{E}(t)}=&\gamma\norm{(a,\u,\ta,\b)}{H^{r+4}}^2
+\gamma\int_{\T^3}\frac{\ta+a^2}{(1+a)^2}(\la^{s}a)^2 \,dx
\nn\\
&\qquad+\sum_{0\le s\le r+3}\left({\big\langle{\Lambda^{s}} \u,{\Lambda^{s}}\nabla a\big\rangle}+{\big\langle{\Lambda^{s}} \ta,{\Lambda^{s}}\div\u\big\rangle}-\big\langle{\Lambda^s}\b,{\Lambda^s}({\n}\cdot\nabla \u)\big\rangle\right)
\end{align*}
and
\begin{align*}
{\mathcal{D}(t)}=&\gamma\kappa\norm{\nabla\ta}{H^{{r+4}}}^2
	+\gamma\sigma\norm{\nabla\b}{H^{{r+4}}}^2+\norm{\nabla a}{H^{{r+3}}}^2+\norm{\div\u}{H^{{r+3}}}^2
+\norm{{\n}\cdot\nabla \u}{H^{r+3}}^2.
\end{align*}
By choosing $\delta>0$ sufficiently small and inserting the upper bounds in
(\ref{xueying9}), (\ref{xueying10}), (\ref{xueying11}) and (\ref{xueying110})
in
(\ref{xueying7}), we obtain
\begin{align}\label{ming13}
\frac{d}{dt}{\mathcal{E}(t)}+\frac12{\mathcal{D}(t)}\le 0.
\end{align}
For any ${N}\ge 4r+7$, by the interpolation inequality, we have
\begin{align*}
\norm{ \u}{H^{r+4}}^2\le&\norm{ \u}{H^{3}}^{\frac32}\norm{
\u}{H^{{N}}}^{\frac12}\le C\delta^{\frac12}\norm{{\n}\cdot\nabla
\u}{H^{r+3}}^{\frac32}.
\end{align*}
Thanks to Lemma \ref{poincareineq},
\begin{align}\label{}
	\norm{\ta}{H^{r+4}}^2
	\approx& \|{\ta}\|_{L^{2}}+\|{\ta}\|_{\dot{H}^{r+4}}^2\nonumber\\
	\le&
	C\|\nabla\ta\|_{L^2}^2+\|\nabla\u\|_{L^2}^4+\|\nabla\b\|_{L^2}^4+\norm{\nabla\ta}{H^{r+4}}^2\nonumber\\
	\le&\norm{\nabla\ta}{H^{r+4}}^2+\delta^2\norm{\u}{H^{r+4}}^2+\delta^2\norm{\b}{H^{r+4}}^2.
\end{align}
Therefore,
\begin{align}
	{\mathcal{E}(t)}
	\le& \, C(\norm{ a}{H^{r+4}}^2 +
	\norm{(\ta,\b)}{H^{r+4}}^2+\norm{\u}{H^{r+4}}^2)\nn\\
	\le& \, C(\norm{ a}{H^{r+4}}^2 +
	\norm{\nabla\ta}{H^{r+4}}^2+
	\norm{\b}{H^{r+4}}^2+\norm{\u}{H^{r+4}}^2)\nn\\
	\le& \,C\norm{ a}{H^{r+4}}^{\frac32}\norm{
		a}{H^{{r+4}}}^{\frac12}+C\norm{ \nabla\ta}{H^{r+4}}^{\frac32}\norm{
		\nabla\ta}{H^{{r+4}}}^{\frac12}+C\norm{\b}{H^{3}}^{\frac32}\norm{
		\b}{H^{{N}}}^{\frac12}+C\norm{ \u}{H^{3}}^{\frac32}\norm{
		\u}{H^{{N}}}^{\frac12}\nn\\
	\le&  \,C\delta^{\frac12}\norm{\nabla
		a}{H^{r+4}}^{\frac32}+C\delta^{\frac12}\norm{\nabla
		\ta}{H^{r+4}}^{\frac32}+
	C\delta^{\frac12}\norm{\nabla
		\b}{H^{r+4}}^{\frac32}+C\delta^{\frac12}\norm{{\n}\cdot\nabla
		\u}{H^{r+3}}^{\frac32}\nn\\
	\le& \,(\mathcal{D}(t))^{\frac34}, \label{ebd}
\end{align}
where we have used Poincare's inequality on $a$ thanks to the fact that $a$ has
mean-zero. Inserting \eqref{ebd} in \eqref{ming13} yields
a Laputa-type inequality,
\begin{align*}
\frac{d}{dt}{\mathcal{E}(t)}+c({\mathcal{E}(t)})^{\frac43}\le 0,
\end{align*}
which implies
$$
{\mathcal{E}(t)}\le C(1+t)^{-3}.
$$
It is easily seen that
$$
{\mathcal{E}(t)}\ge\norm{(a,\u,\ta,\b)}{H^{{r+4}}}^2
$$
and thus
\begin{align}\label{ming17}
\norm{(a,\u,\ta,\b)}{H^{{r+4}}}\le C(1+t)^{-\frac32}.
\end{align}
Taking $\ell={N}$ in \eqref{xueying7} and using the embedding relation, we find
\begin{align*}
&\frac{d}{dt}\norm{(a,\u,\ta,\b)}{H^{N}}^2+\sigma\norm{\nabla\b}{H^{N}}^2
+\kappa\norm{\nabla\ta}{H^{N}}^2
\le C Z(t)\norm{(a,\u,\ta,\b)}{H^{N}}^2
\end{align*}
with
\begin{align*}
Z(t)\stackrel{\mathrm{def}}{=}&
\|(a,\u,\ta,\b)\|_{H^2}+(1+\|a\|_{H^2}^2)\|(a,\u,\ta,\b)\|_{H^2}^2
+(1+
\norm{a}{H^2})\norm{( a,\u, \ta,\b)}{H^3}\nn\\
&+\norm{ \ta}{H^4}+(1+\|(a,\u,\b)\|_{H^2}^2+\|\u\|_{H^3}^2)\norm{( a,\u, \ta,\b)}{H^3}^2.
\end{align*}
Thanks to \eqref{ming17},
\begin{align*}
\int_0^tZ(\tau)\,d\tau\le C.
\end{align*}
It then follows from Gr\"{o}nwall's inequality that
\begin{align*}
\norm{(a,\u,\ta,\b)}{H^{N}}^2
\le&C\norm{(a_0,\u_0,\ta_0,\b_0)}{H^{N}}^2
\le C\varepsilon^2.
\end{align*}
We finish the proof of \eqref{bb} by taking $\varepsilon$ small enough so that
$C\varepsilon\le \delta/2$. This finishes the bootstrapping argument and thus
the proof of Theorem \ref{dingli} is complete.
\end{proof}

\section{Declarations}

\bigskip
\noindent{\bf Ethical Approval: }\

 We certify that this manuscript is original and has not been published and will not be submitted elsewhere for publication while being considered by Annals of PDE.

\bigskip
\noindent{\bf Conflict of Interest:} \

The authors declare that they have no conflict of interest.

\bigskip
\noindent{\bf Data availability statement:}\

 Data sharing not applicable to this article as no datasets were generated or analysed during the
current study.

\bigskip
\noindent{\bf Authors' contributions:   }\

Jiahong Wu, Fuyi  Xu and Xiaoping Zhai contributed equally to this work.

\bigskip
\noindent{\bf Corresponding author:   }\ Xiaoping Zhai.

\bigskip
\noindent{ \bf Funding Statement:  }\
Wu was partially supported by the National Science Foundation of the United
States under DMS 2104682 and DMS 2309748. Xu was partially supported by   the
National Natural Science Foundation of China 12326430,
and the  Natural Science Foundation of Shandong Province ZR2021MA017. Zhai was partially supported by the Guangdong Provincial Natural Science
Foundation under grant  2024A1515030115.

\vskip .3in

\end{document}